\documentclass[reqno,11pt]{amsart}
\usepackage{graphicx, amsmath, amssymb, amscd, mathtools, hyperref, amsthm, euscript, caption, amsfonts,bm,bbm,xcolor}  

\hypersetup{
  colorlinks=true,
  linkcolor=blue,
  citecolor=blue,
  urlcolor=blue
}

\usepackage[T1]{fontenc}
\setcounter{tocdepth}{1}
\DeclareMathAlphabet{\mathpzc}{OT1}{pzc}{m}{it}

\usepackage{wrapfig}
\usepackage{psfrag, graphicx, pgf, float}
\usepackage{mathrsfs}

\newtheorem{thm}[equation]{Theorem}
\newtheorem{theorem}[equation]{Theorem}

\newtheorem{rmk}[equation]{Remark}

\newtheorem{prop}[equation]{Proposition}

\newtheorem{cor}[equation]{Corollary}
\newtheorem{lem}[equation]{Lemma}\newtheorem{Lem}[equation]{Lemma}
\newtheorem{lemma}[equation]{Lemma}
\newtheorem{Def}[equation]{Definition}

\numberwithin{equation}{section}

\numberwithin{equation}{section}

\newcommand{\be}{begin{equation}}
\newcommand{\bH}{\mathbb H}

\newcommand{\e}{\epsilon}
\newcommand{\z}{\mathbb{Z}}

\renewcommand{\c}{\mathbb{C}}
\newcommand{\br}{\mathbb{R}}

\newcommand{{\grinv}}{{\Cal G}^{-r}}

\newcommand{\ba}{\backslash}

\newcommand{\G}{\Gamma}

\newcommand{\Cal}{\mathcal}

\newcommand{\bp}{\begin{pmatrix}}
\newcommand{\ep}{\end{pmatrix}}
\renewcommand{\be}{\begin{equation}}
\newcommand{\ee}{\end{equation}}

\renewcommand{\bp}{{\rm bp}}

\newcommand{\SO}{\operatorname{SO}}
\newcommand{\SU}{\operatorname{SU}}

\newcommand{\vol}{\operatorname{vol}}

\newcommand{\Vol}{\op{Vol}}
\newcommand{\PSL}{\op{PSL}}

\newcommand{\norm}[1]{\lVert #1 \rVert}

\newcommand{\op}{\operatorname}\newcommand{\supp}{\operatorname{supp}}

\newcommand{\cl}[1]{\overline{#1}}

\newcommand{\Ga}{\Gamma}

\newcommand{\Z}{\z}







\newcommand{\ga}{\gamma}
\newcommand{\al}{\alpha}

\newcommand{\La}{\Lambda}

\def\cM{\cal M}

\def\e{\mathrm{e}}

\def\dim{\operatorname{dim}}

\def\SO{\operatorname{SO}}

\def\PSL{\operatorname{PSL}}

\def\supp{\operatorname{supp}}

\def\vol{\operatorname{vol}}

\def\Stab{\mathrm{Stab}}

\newcommand{\fg}{\mathfrak{g}}
\newcommand{\fp}{\mathfrak{p}}
\newcommand{\fa}{\mathfrak{a}}

\newcommand{\fk}{\mathfrak{k}}

\newcommand{\cal}{\mathcal}
\renewcommand{\e}{\varepsilon}
\renewcommand{\epsilon}{\e}

\newcommand{\core}{\mathrm{core}}
\newcommand{\hull}{\mathrm{hull}}

\newcommand{\RFM}{\mathrm{RF}\cal M}

\newcommand{\sfp}{\mathsf{p}}

\newcommand{\sfd}{\mathsf{d}}
\newcommand{\inj}{\op{inj}}

\newcommand{\xx}{X}
\newcommand{\cm}{\op{core}(\cal M)}

\newcommand{\cN}{\mathcal N}\newcommand{\nc}{\op{nc}}
\begin{document}

\title[Totally geodesic submanifolds]{Properness and finiteness of totally geodesic submanifolds in the convex core}

\author{Minju Lee  and Hee Oh}
\address{Department of Mathematical Sciences, KAIST, Daejeon 34141, Korea}
\email{minju.lee@kaist.ac.kr}
\address{Mathematics department, Yale university, New Haven, CT 06520}

\email{hee.oh@yale.edu}
\thanks{
 Oh is partially supported by the NSF grant No. DMS-2450703.}

\begin{abstract} 
    We study totally geodesic submanifolds in the convex core of geometrically finite rank-one locally symmetric manifolds. Although the infinite-volume setting can exhibit highly complicated behavior, including geodesic planes with fractal closures, we show that a strong rigidity persists inside the convex core. This rigidity has  striking consequences in the infinite volume setting: every maximal totally geodesic submanifold of dimension at least two contained in the convex core is properly immersed and has finite volume, and only finitely many such submanifolds can occur. These results stand in sharp contrast to the behavior in the finite-volume setting. Moreover, combining this finiteness result with the work of Bader-Fisher-Miller-Stover and of Gromov-Schoen, we deduce that any geometrically finite rank-one manifold with infinitely many maximal totally geodesic submanifolds of dimension at least two and of finite volume must be arithmetic.
\end{abstract}

\maketitle
\tableofcontents
\section{Introduction}\label{sec.int} The study of totally geodesic submanifolds occupies a central position in differential geometry, dynamics, and geometric topology. In locally symmetric spaces of finite volume, such manifolds arise
naturally from algebraic subgroups and encode deep arithmetic and geometric structure. Their distribution reflects the intricate interplay between discrete subgroups, rigidity phenomena, and homogeneous dynamics. 

In the infinite volume setting, however, the picture is
considerably more delicate. Although geometric finiteness provides a natural analogue of finite volume, many of the structural features familiar from
the compact or finite volume world no longer hold. Even in real hyperbolic $3$-manifolds, the qualitative behavior of geodesic planes depends sensitively on the  geometry of the ambient space, and examples are known in which
certain geodesic planes have fractal closures.

The main purpose of this paper is to establish that, despite these potential pathologies, a robust form of rigidity persists for totally geodesic submanifolds contained in the convex core of a geometrically finite rank-one locally symmetric manifold. This rigidity has striking implications for the geometry 
of infinite volume geometrically finite manifolds, in sharp contrast to what can occur in the finite volume case.

\medskip 

Let  $G$ be a connected simple real algebraic group of rank one, and let $X=X_G$ denote its associated Riemannian symmetric space.  Concretely, $G$ is locally isomorphic to one of $\SO(d,1)$, $\SU(d, 1)$, $\op{Sp}(d, 1)$  $(d\geq 2)$, or $F_4^{-20}$
and $X$ is the real $\mathbb H^d_{\mathbb R}$, complex $\mathbb H^d_{\mathbb C}$, quaternionic $\mathbb H^d_{\mathbb H}$, or octonionic hyperbolic space $\mathbb H^2_{\mathbb O}$, respectively.

Let $\Gamma<G$ be a non-elementary discrete subgroup, and let $\cal M=\Ga\ba X$ be the associated locally symmetric space.
The convex core of $\cal M$ is the smallest closed convex subset of $\cal M$ 
whose inclusion into $\cM$ is a homotopy equivalence.
Equivalently, 
$$\core (\cM)=\Ga\ba \hull (\La)$$
where $\La$ denotes the limit set of $\Ga$ and
$\hull (\La)\subset X$ its convex hull (see Section \ref{sec.gf}).

\begin{figure}[htbp] \begin{center}
  \includegraphics [height=4cm]{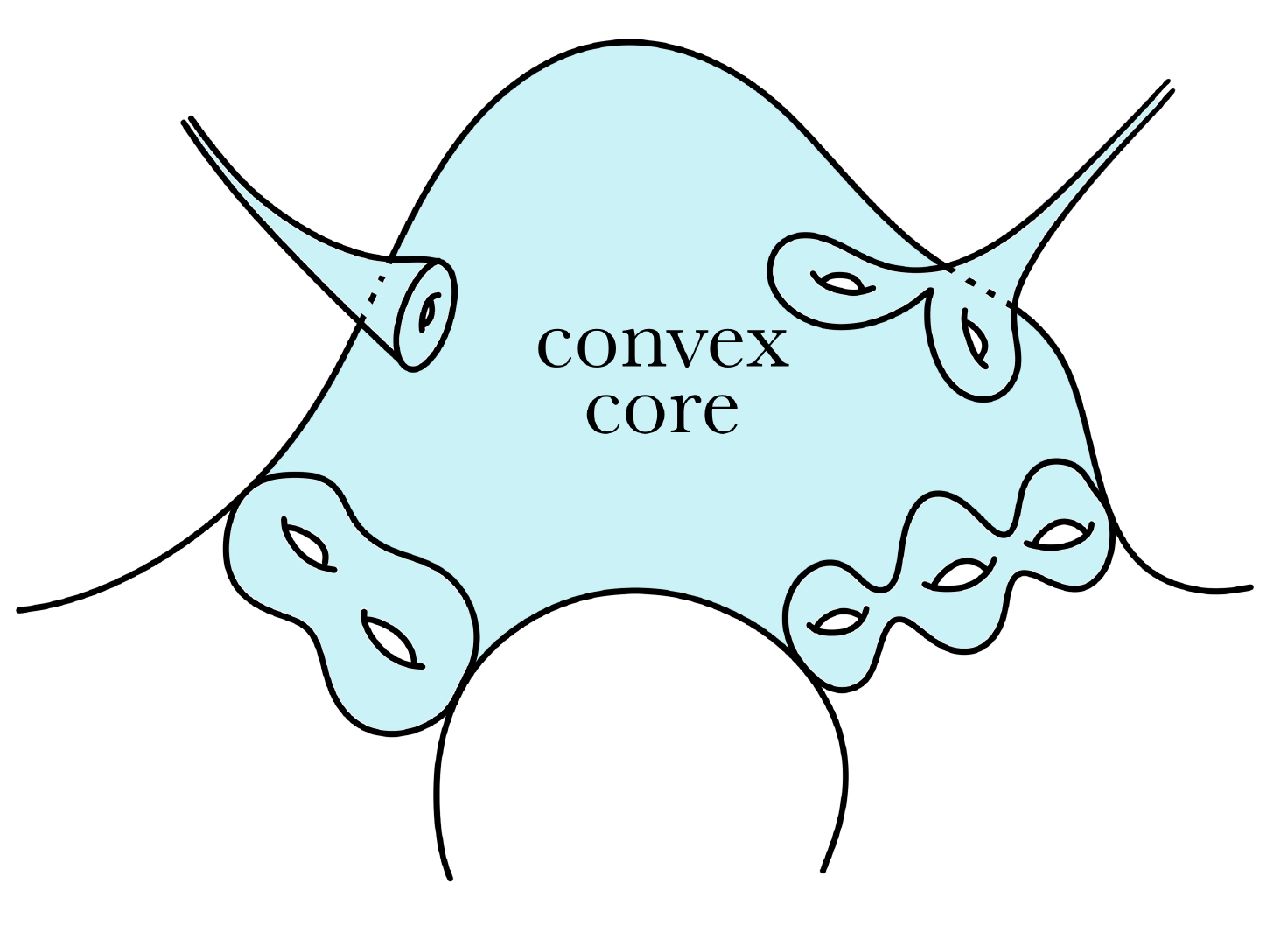} 
\end{center}
\end{figure}    

Throughout this paper, we assume that
$$\text{$\cal M$ is geometrically finite},$$
that is, the unit neighborhood of $\op{core}(\cal M)$ has finite volume. Clearly, every finite-volume rank-one manifold is geometrically finite, but the class of geometrically finite manifolds is much broader than that of finite-volume ones.

\subsection*{Properness of totally geodesic submanifolds} 
A totally geodesic subspace of $X$ is a connected smooth submanifold $Y$ that contains every complete geodesic in $X$ tangent to $Y$. A totally geodesic submanifold $\cal N$ of $\cal M$ is the image of such a $Y$ under the quotient map $X\mapsto \Ga\ba X$.
Equivalently, 
$\cal N$ is the image of the natural immersion $$\iota: \op{Stab}_\Ga (Y) \ba Y \to \cal M$$  where $\op{Stab}_\Ga (Y)= \{\ga\in \Ga: \ga Y =Y\}$. The dimension of $\cal N$ is defined to be the dimension of $Y$, and the volume of $\cal N$ is defined as the volume of  the quotient $\op{Stab}_\Ga (Y) \ba Y$. We say that $\mathcal N$ is properly immersed if the immersion $\iota$ is  proper; equivalently, if $\cN$ is closed in $\cM$, that is, $\overline{\cN}=\cN$ (Lemma \ref{OS}).

Throughout the paper, a totally geodesic submanifold of $\cM$ is assumed to have 
$${\text{\bf dimension at least two}}$$
so that we need not repeat this assumption.
A totally geodesic submanifold of dimension one will simply be referred to as a geodesic.

We focus on totally geodesic submanifolds contained in $\core (\cM)$.
For instance, any bounded totally geodesic submanifold is automatically contained in $\core (\cM)$.
 By a maximal totally geodesic submanifold of $\core (\cM)$, we mean a totally geodesic submanifold $\cN$ contained in  $\core (\cM)$ with $\cN\ne \cM$ that is maximal with respect to inclusion: if $\cal N'$ is a totally geodesic submanifold
satisfying $\cal N\subset \cal N'\subset \core (\cM)$, then $\cal N=\cal N'$.
\begin{theorem}[Properness]\label{p} Let
  $\cal M$ be a geometrically finite manifold of infinite volume.
Then every maximal totally geodesic immersed submanifold contained in $\core (\cM )$ is properly immersed and has finite volume.
\end{theorem}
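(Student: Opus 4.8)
The plan is to pass to the homogeneous dynamics picture on the frame bundle and analyze the orbit closure of the associated intermediate subgroup. Let $\cN \subset \core(\cM)$ be a maximal totally geodesic submanifold of dimension $k \ge 2$, arising from a totally geodesic subspace $Y \subset X$. There is a connected closed subgroup $H < G$, locally isomorphic to $\SO(k,1)$ (or to the isometry group of the appropriate rank-one symmetric subspace), such that $Y = H \cdot x_0$ for a suitable basepoint, and $\cN$ lifts to an $H$-orbit $x_0 H$ in $\Gamma \backslash G$, where we identify $X$ with $G/K$ and work on $\Gamma \backslash G$. The immersion $\iota$ is proper exactly when $x_0 H$ is closed in $\Gamma \backslash G$; and the volume of $\cN$ is finite exactly when $x_0 H$ carries a finite $H$-invariant measure, i.e. $\op{Stab}_\Gamma(Y) \cap H$ is a lattice in $H$. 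So it suffices to prove both of these for the orbit $x_0 H$.

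The key input is that $\cN \subset \core(\cM)$ forces the orbit $x_0 H$ to meet the compact-core dynamics: concretely, $x_0 H$ projects into $\Gamma \backslash \hull(\Lambda)$, so the nonwandering part of the geodesic flow restricted to directions tangent to $Y$ is substantial — indeed all of $Y$ lies over the convex core, so the $H$-orbit lies in the "renormalized frame bundle" $\RFM$-type set associated to $\Gamma$. First I would invoke the topological rigidity / orbit closure machinery available in this rank-one setting (the analogue, inside the convex core, of the Ratner-type and Lee–Oh–type classification results): because the entire orbit $x_0 H$ sits over the convex core, its closure $\overline{x_0 H}$ is itself homogeneous, of the form $x_0 L$ for a connected closed subgroup $L$ with $H \le L \le G$ and with $x_0 L$ supporting the relevant conservative dynamics. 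By maximality of $\cN$ among totally geodesic submanifolds contained in $\core(\cM)$, together with the structure theory of intermediate subgroups between $H$ and $G$ in rank one (there are essentially none strictly between $H$ and $G$ other than those again giving totally geodesic submanifolds, or all of $G$), one concludes $L = H$: the orbit is already closed. Here one uses crucially that $\cM$ has infinite volume, so $L = G$ is impossible — $\overline{x_0 H} = \Gamma \backslash G$ would mean $x_0 H$ is dense, contradicting that it lies in the proper closed subset lying over $\core(\cM)$ (the convex core is not all of $\cM$). Thus $\cN$ is properly immersed.

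Finiteness of volume then follows from geometric finiteness of $\cM$: the closed orbit $x_0 H \cong \op{Stab}_\Gamma(Y) \backslash H$ lies over $\core(\cM)$, hence over the unit neighborhood of $\core(\cM)$, which has finite volume by hypothesis. One shows that $\op{Stab}_\Gamma(Y)$ acting on $Y$ is itself a geometrically finite group whose own convex core is contained in (the preimage of) $\core(\cM) \cap \cN$; being contained in a finite-volume neighborhood forces the limit set of $\op{Stab}_\Gamma(Y)$ in $\partial Y$ to be all of $\partial Y$, so $\op{Stab}_\Gamma(Y)$ is a lattice in $H$, i.e. $\cN$ has finite volume. (Alternatively: a properly immersed totally geodesic submanifold contained in a set of finite volume has finite volume directly, since its own volume is bounded by that of the ambient finite-volume neighborhood via a uniform tube argument.)

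The main obstacle is the orbit-closure step: establishing that $\overline{x_0 H}$ is homogeneous. In the infinite-volume rank-one setting there is no unconditional Ratner theorem, and indeed fractal closures do occur — the whole point of the paper's title. The resolution must be exactly the rigidity phenomenon advertised in the abstract: the constraint that the orbit stays over the convex core is what rescues homogeneity. So the heart of the argument is a classification of orbit closures $\overline{x_0 H}$ under the standing assumption that $x_0 H \subset \RFM$ (equivalently $\cN \subset \core(\cM)$), presumably proved via the usual $\mathrm{Ad}$-unipotent dynamics and the recurrence furnished by the Burger–Roblin/Bowen–Margulis–Sullivan measure supported on $\RFM$ together with the finiteness coming from geometric finiteness. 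Once that classification is in hand, maximality of $\cN$ and infinite-volume of $\cM$ pin down $L = H$ and the theorem follows.
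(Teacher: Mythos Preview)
Your proposal is correct and matches the paper's approach: lift $\cN$ to an orbit $x_0 H \subset \RFM$, invoke the orbit-closure classification inside $\RFM$ (the paper's Theorem~\ref{Theoremmax}, proved via Ratner's measure classification, Dani--Margulis avoidance, and the non-divergence afforded by geometric finiteness) to obtain $\overline{x_0 H} = x_0 L$ with $\op{Stab}_L(x_0)$ a lattice in $L$, and then use maximality together with $\Vol(\cM)=\infty$ to force $\cN=\overline{\cN}$. One simplification worth noting: finite volume of $\cN$ comes for free from the lattice conclusion of the orbit-closure theorem once $L=H$, so your separate arguments for this step---via geometric finiteness of $\op{Stab}_\Gamma(Y)$ or a tube-volume comparison---are unnecessary, and the first of them is not obviously justified as stated.
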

If $\cM$ has finite volume, then
$\cal M=\core (\cM)$, and the geodesic flow  is ergodic with respect to the Liouville measure on the unit tangent bundle. In this case,
 almost every totally geodesic submanifold is dense in $\cM$, so Theorem \ref{p} fails completely in the finite volume setting. 
 
 Theorem \ref{p} has several immediate consequences.
First, it implies that if there exists a maximal totally geodesic submanifold $\cal N$ contained in $\core (\cM)$ with $\overline{\cal N}\ne \cal N$,
 then $\cM$ must have finite volume. Second, it follows that $\cM$ contains a finite volume totally geodesic  submanifold if and only 
if there exists a totally geodesic submanifold of dimension two contained in $\core (\cM)$.

The minimal codimension of totally geodesic subspaces in $X=\mathbb H^d_{\mathbb K}$ is equal to $\op{dim}_\br(\mathbb K)$, which is  $1,2,4, 8$ for $\mathbb K=\br, \mathbb C, \mathbb H, \mathbb O$, respectively.
\begin{cor} 
Every totally geodesic submanifold of $\cM=\Ga\ba \mathbb H^d_{\mathbb K}$ of codimension $\op{dim}_\br(\mathbb K)$ contained in $\core (\cM)$ is properly immersed and has finite volume.
\end{cor}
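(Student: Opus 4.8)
The plan is to reduce the statement immediately to Theorem~\ref{p}: I will show that a totally geodesic submanifold $\cN\subset\core(\cM)$ with $\codim_\cM\cN=\dim_\br(\mathbb K)$ is automatically a \emph{maximal} totally geodesic submanifold contained in $\core(\cM)$, and then invoke that theorem. I work under the standing hypotheses of Theorem~\ref{p}, namely that $\cM$ is geometrically finite of infinite volume; in particular $\core(\cM)\subsetneq\cM$, since otherwise the unit neighborhood of $\core(\cM)$ would be all of $\cM$ and hence of infinite volume, contradicting geometric finiteness.

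The key input is the remark preceding the statement: $\dim_\br(\mathbb K)$ is the \emph{minimal} codimension of a proper totally geodesic subspace of $X=\mathbb H^d_{\mathbb K}$. To use it, suppose $\cN\subset\cN'\subset\core(\cM)$ with $\cN'$ a totally geodesic submanifold. Since $\core(\cM)\subsetneq\cM$ we have $\cN'\ne\cM$, so, writing $\cN'$ as the image of a totally geodesic subspace $Y'\subset X$ under the quotient map $X\to\cM$ (a local isometry), the subspace $Y'$ is a \emph{proper} totally geodesic subspace of $X$, whence $\codim_X Y'\ge\dim_\br(\mathbb K)$. Consequently $\dim\cN'=\dim X-\codim_X Y'\le\dim X-\dim_\br(\mathbb K)=\dim\cM-\dim_\br(\mathbb K)=\dim\cN$. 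Combined with $\dim\cN\le\dim\cN'$ (from $\cN\subset\cN'$) this gives $\dim\cN=\dim\cN'$, and since a connected totally geodesic submanifold cannot be properly contained in one of the same dimension (two totally geodesic subspaces of $X$ of equal dimension coincide as soon as their intersection fails to be lower-dimensional, and a connected manifold is not a countable union of lower-dimensional submanifolds), we conclude $\cN=\cN'$. Hence $\cN$ is maximal, and it is $\ne\cM$ because $\codim_\cM\cN=\dim_\br(\mathbb K)\ge1$.

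It then remains only to apply Theorem~\ref{p} to the maximal totally geodesic submanifold $\cN\subset\core(\cM)$, which yields at once that $\cN$ is properly immersed and of finite volume --- the assertion of the corollary. I do not expect any real obstacle beyond Theorem~\ref{p} itself: the entire content of the reduction is the identification of $\dim_\br(\mathbb K)$ with the minimal codimension, which makes the maximality hypothesis automatic. The only points that require (routine) care are that a proper inclusion of connected totally geodesic submanifolds strictly raises the dimension, and that codimension is preserved under the covering $X\to\cM$.
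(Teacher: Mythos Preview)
Your proposal is correct and matches the paper's intended argument: the paper states this corollary as an immediate consequence of Theorem~\ref{p}, relying precisely on the sentence just before it identifying $\dim_\br(\mathbb K)$ as the minimal codimension of a proper totally geodesic subspace of $\mathbb H^d_{\mathbb K}$, which forces any such $\cN\subset\core(\cM)$ to be maximal. Your write-up simply spells out this one-line reduction in detail, including the routine verification that a proper inclusion of connected totally geodesic submanifolds of equal dimension is impossible.
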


\subsection*{Finiteness of totally geodesic manifolds}
We next establish a finiteness property for totally geodesic submanifolds contained in the convex core.

\begin{theorem}[Finiteness I]\label{fin1}
 If $\cal M$ is a geometrically finite manifold of infinite volume, then there exist only finitely many maximal totally geodesic submanifolds  contained in $\core (\cM)$. 
\end{theorem}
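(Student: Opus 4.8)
The plan is to translate the statement into a closed-orbit problem on $\Gamma\backslash G$ and run a measure-rigidity argument, using Theorem~\ref{p} to feed in the two crucial facts (properness and finite volume of the maximal submanifolds) and to eliminate the degenerate limits.

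\textbf{Reduction to one type.} A totally geodesic subspace of $X=\bH^d_{\mathbb K}$ is isometric to $\bH^m_{\mathbb K'}$ for a subfield $\mathbb K'\subseteq\mathbb K$ and some $m$, and up to the action of $G$ there are only finitely many such subspaces; the connected isometry stabilizer of each is, up to conjugacy, a fixed connected simple subgroup $H<G$ generated by unipotent one-parameter subgroups together with a diagonalizable one-parameter subgroup. So it suffices to fix such an $H$ and bound the number of maximal totally geodesic submanifolds of type $H$ contained in $\core(\cM)$. Every totally geodesic submanifold of type $H$ is the image in $\cM$ of an $H$-orbit $xH\subset\Gamma\backslash G$; for a maximal such $\cN\subset\core(\cM)$, Theorem~\ref{p} shows this orbit is closed and $\cN$ has finite volume, so $x^{-1}\Gamma x\cap H$ is a lattice in $H$ and $xH$ carries a canonical $H$-invariant probability measure $\mu_{xH}$. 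Moreover $\cN\subset\core(\cM)$ is equivalent to the boundary sphere of the lift of $\cN$ lying in $\Lambda$, which forces $xH$ (normalized so the geodesic direction is tangent to $\cN$) to lie in the renormalized frame bundle $\RFM$, a closed subset of $\Gamma\backslash G$; by Moore ergodicity $\mu_{xH}$ is $H$-ergodic and $\supp\mu_{xH}\subseteq\RFM$.

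\textbf{The contradiction scheme.} Suppose there were infinitely many pairwise distinct maximal totally geodesic submanifolds $\cN_1,\cN_2,\dots\subset\core(\cM)$ of type $H$, with corresponding closed orbits $x_iH\subset\RFM$ and measures $\mu_i=\mu_{x_iH}$. First I would establish tightness of $\{\mu_i\}$: a Dani--Margulis type uniform non-divergence estimate for $H$-orbits inside $\RFM$ — built from the thick--thin decomposition of $\core(\cM)$ into a compact part and finitely many standard cusp neighborhoods, together with quantitative recurrence of unipotent orbits — should provide, for each $\e>0$, a compact set $Q_\e\subset\Gamma\backslash G$ with $\mu_i(Q_\e)\ge 1-\e$ for all $i$. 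Passing to a subsequence, $\mu_i\to\mu$ weakly, where $\mu$ is an $H$-invariant probability measure with $\supp\mu\subseteq\RFM$. Next I would invoke a Mozes--Shah type rigidity theorem in this geometrically finite setting — of the kind underlying Theorem~\ref{p} — to conclude that $\mu$ is homogeneous, i.e. $\mu=\mu_{xL}$ for a closed orbit $xL$ with $H\le L\le G$, and $x_iH\subseteq xL$ for all large $i$. There are three cases. If $L=H$, then $x_iH=xH$ eventually, contradicting distinctness. If $L=G$, then $\mu_{xL}$ is a $G$-invariant probability measure on $\Gamma\backslash G$, forcing $\Gamma$ to be a lattice and $\cM$ to have finite volume, contrary to hypothesis. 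If $H\subsetneq L\subsetneq G$, the orbit-closure classification forces $L$ to be the connected stabilizer of a totally geodesic subspace strictly containing the lift of $\cN_i$ (for large $i$); since $xL\subset\RFM$, the corresponding totally geodesic submanifold $\cN_{xL}$ lies in $\core(\cM)$, is proper and of finite volume, and satisfies $\cN_i\subsetneq\cN_{xL}\subsetneq\cM$ for all large $i$ (strictness because the $\cN_i$ are pairwise distinct while $\cN_{xL}$ is a single submanifold), contradicting maximality of $\cN_i$. Each case is impossible, so only finitely many maximal $\cN$ of type $H$ occur in $\core(\cM)$; summing over the finitely many types $H$ completes the proof.

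\textbf{Main obstacle.} The hard part is the uniform non-divergence, i.e. tightness of $\{\mu_i\}$: unlike in the finite-volume setting there is no direct appeal to Dani--Margulis, and since $\vol(\cN_i)$ need not be uniformly bounded, ruling out escape of mass into the cusps of $\core(\cM)$ requires a genuine geometric understanding of how finite-volume totally geodesic submanifolds can penetrate the cusp regions of a geometrically finite manifold. A secondary point is to pin down the precise rigidity statement needed — homogeneity of weak limits of $H$-invariant probability measures supported in $\RFM$, together with eventual nesting of supports — which I expect to follow from, or be provable alongside, the rigidity behind Theorem~\ref{p}.
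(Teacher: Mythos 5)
Your proposal follows essentially the same route as the paper: encode each maximal submanifold in $\core(\cM)$ as a closed, finite-volume $H$-orbit inside $\RFM$, prove non-divergence (tightness) of the associated probability measures using geometric finiteness together with quantitative non-divergence of unipotent orbits (the paper's Proposition \ref{ND}, via Benoist--Oh and Buenger--Zheng), apply Mozes--Shah to the weak-$^*$ limit, and use maximality/distinctness to force the limit group to be $G$, contradicting infinite volume. The technical points you leave open (replacing $H$-orbits by closures of $H^{\nc}$-orbits so that the measures are unipotently ergodic, choosing Birkhoff-generic base points in a fixed compact set, and the conjugation bookkeeping in the intermediate case) are exactly how the paper implements this scheme in Theorems \ref{Theoremmax}, \ref{eq00} and Corollary \ref{Theoremfin3}.
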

By Theorem \ref{p}, all of these submanifolds are properly immersed and have finite volume.

Although there may exist infinitely many  maximal properly immersed totally geodesic submanifolds in $\cM$, only finitely many of them have finite volume:
\begin{theorem}[Finiteness II]\label{fin2}
 If $\cal M$ is a geometrically finite manifold of infinite volume, then there exist only finitely many  maximal totally geodesic submanifolds of finite volume.
\end{theorem}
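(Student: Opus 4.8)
The plan is to reduce Finiteness II to Finiteness I by showing that a maximal totally geodesic submanifold $\cal N$ of finite volume, even if not contained in $\core(\cM)$, interacts with the convex core in a controlled way, and that there can be only finitely many such $\cal N$ not contained in $\core(\cM)$. First I would observe that if $\cal N$ has finite volume then $\Stab_\Ga(Y)\ba Y$ has finite volume, so $\cal N$ is itself a geometrically finite (indeed finite-volume) totally geodesic submanifold; in particular $\cal N$ is properly immersed and its own limit set $\La_{\cal N}$ equals $\partial Y$, the full boundary at infinity of $Y$. The key dichotomy is: either $\cal N\subset\core(\cM)$, in which case Theorem \ref{fin1} already bounds the number of maximal such submanifolds, or $\cal N\not\subset\core(\cM)$, in which case $\La_{\cal N}=\partial Y$ is not contained in $\La$, so $Y$ must exit the convex hull $\hull(\La)$.

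For the second case, the main step is to show that the part of $\cal N$ outside $\core(\cM)$ is ``large'' in a quantitative sense, and to exploit geometric finiteness of $\cM$. Since $\cal N$ has finite volume while $\core(\cM)$ meets $\cal N$ in a proper subset, $\cal N\cap\core(\cM)$ is a proper convex subset of the finite-volume manifold $\cal N$; I would argue that $\cal N$ must then enter the thin part of $\cM$ corresponding to a cusp or, in the convex-cocompact case, that $Y$ travels arbitrarily far from $\hull(\La)$, and use the finiteness of cusps together with a pigeonhole/compactness argument on the (finitely many) thick-part return maps. The crucial point is that a finite-volume totally geodesic $\cal N$ not inside $\core(\cM)$ is determined, up to the finitely many choices coming from the cusp structure and from $\cal N\cap\core(\cM)$, by a bounded-geometry piece, and a limiting argument (as in the proof of Theorem \ref{fin1}, invoking the rigidity established inside the convex core) forces any infinite family of such $\cal N$ to accumulate, contradicting maximality or producing a totally geodesic submanifold properly containing infinitely many members of the family.

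More precisely, I would run the following argument. Suppose for contradiction that $\{\cal N_j\}$ is an infinite family of distinct maximal finite-volume totally geodesic submanifolds; by Theorem \ref{fin1} only finitely many lie in $\core(\cM)$, so after passing to a subsequence we may assume none does, hence each $\cal N_j$ meets $\cM\setminus\core(\cM)$. Lifting, each $Y_j$ has $\partial Y_j\not\subset\La$. Now the intersection $\cal N_j\cap\core(\cM)$ is a nonempty closed convex subset (it is nonempty since $\cal N_j$ is non-elementary and $\La_{\cal N_j}\subset\partial X$ meets $\La$ in a non-elementary set, forcing $Y_j$ to meet $\hull(\La)$); restricting to the convex core, $\cal N_j\cap\core(\cM)$ is a totally geodesic subset of $\core(\cM)$, and I would enlarge it to a maximal totally geodesic submanifold $\cal N_j'\subset\core(\cM)$. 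Since there are only finitely many such $\cal N_j'$ by Theorem \ref{fin1}, after a further subsequence all the $\cal N_j\cap\core(\cM)$ are contained in a single maximal $\cal N'\subset\core(\cM)$. But $\cal N'$ has finite volume (Theorem \ref{p}), and the $\cal N_j$ are totally geodesic submanifolds of $\cM$ whose cores all lie in the fixed finite-volume submanifold $\cal N'$; a totally geodesic submanifold is determined by any open subset of itself, and the finitely-many-ways in which a finite-volume totally geodesic $\cal N'$ can be extended to a totally geodesic submanifold of $\cM$ (each extension corresponding to a totally geodesic subspace of $X$ containing a fixed $Y'$, of which there are finitely many of each dimension through a point, and the relevant stabilizer data is finite) shows that only finitely many distinct $\cal N_j$ can arise, a contradiction. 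I expect the main obstacle to be the claim that $\cal N_j\cap\core(\cM)$ is genuinely two-dimensional (so that Theorem \ref{fin1} applies to its maximal extension): one must rule out the degenerate case where $Y_j$ meets $\hull(\La)$ only in a geodesic or a point, which requires a separate argument using that $\La_{\cal N_j}\cap\La$ spans, together with convexity of $\hull(\La)$, to produce a $2$-dimensional totally geodesic piece inside $\core(\cM)$; handling this, and making the ``finitely many extensions'' step precise in the rank-one setting where totally geodesic subspaces are classified, is where the real work lies.
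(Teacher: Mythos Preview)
Your dichotomy is vacuous in its second branch, and this is exactly the observation you are missing. In your first paragraph you correctly note that if $\cal N=\Stab_\Ga(Y)\backslash Y$ has finite volume then $\Stab_\Ga(Y)$ is a lattice in $G_Y$, so the limit set $\La_{\cal N}$ of $\Stab_\Ga(Y)$ equals all of $\partial_\infty Y$. But $\Stab_\Ga(Y)$ is a subgroup of $\Ga$, so automatically $\La_{\cal N}\subset\La$; hence $\partial_\infty Y\subset\La$, which is precisely the condition for $Y\subset\hull(\La)$ and thus $\cal N\subset\core(\cM)$. This is the implication $(1)\Rightarrow(2)$ of Lemma~\ref{vol} in the paper, proved there by an equivalent injectivity-radius argument: if some point of $\partial_\infty Y$ lies outside $\La$, a geodesic ray in $Y$ escapes into the domain of discontinuity, the injectivity radius along it tends to infinity, and $\cal N$ has infinite volume. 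Consequently every finite-volume totally geodesic submanifold already lies in $\core(\cM)$, and Theorem~\ref{fin2} follows at once from Theorem~\ref{fin1}: a totally geodesic submanifold of finite volume that is maximal in $\cM$ is a fortiori maximal among those contained in $\core(\cM)$, and the latter form a finite set.

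All of the apparatus you build for the case $\cal N\not\subset\core(\cM)$ --- intersecting with the core, enlarging to a maximal $\cal N_j'$, pigeonholing onto finitely many such $\cal N'$, and the ``finitely many extensions'' step --- is addressing an empty case. Even on its own terms that argument has genuine gaps: $\cal N_j\cap\core(\cM)$ is a convex chunk of $\cal N_j$, not a totally geodesic submanifold of $\cM$ in the sense used here (it is not the image of a \emph{complete} totally geodesic subspace), so Theorem~\ref{fin1} does not apply to it and there is no well-defined way to ``enlarge it to a maximal totally geodesic submanifold $\cal N_j'\subset\core(\cM)$''. You anticipated trouble with the degenerate-intersection issue, but the more basic problem is that your enlargement step has no content.
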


Theorems \ref{fin1} and \ref{fin2} fail dramatically in the finite volume setting, since
an arithmetic manifold containing one maximal totally geodesic submanifold of finite volume necessarily contains infinitely many such submanifolds. For example, the figure-eight knot complement contains infinitely many finite area immersed totally geodesic surfaces
(see \cite{Re}, \cite{MR}).

\begin{rmk} \rm For real hyperbolic $3$-manifolds, these theorems were proved by McMullen–Mohammadi–Oh \cite{MMO1} in the convex cocompact case and by Benoist–Oh \cite{BO1} in full generality. In higher dimensions, they were established in \cite{LO} for convex cocompact real hyperbolic $d$-manifolds whose convex cores have totally geodesic boundary, for all $d \ge 3$.
\end{rmk}

\subsection*{Arithmeticity from totally geodesic submanifolds} 
Every quaternionic or octonionic hyperbolic manifold $\cM=\Ga\ba X$ of finite volume is arithmetic, that is, $\Ga$ is an arithmetic lattice of $G$, by the theorem of Gromov-Schoen \cite{GS}.
 Answering a question of McMullen and Reid, Bader-Fisher-Miller-Stover (\cite{BF1}, \cite{BF2}) showed that if a real or complex hyperbolic manifold of finite volume 
contains infinitely many maximal totally geodesic submanifolds of finite volume, then it is arithmetic. See also  Margulis-Mohammadi \cite{MM} for compact real hyperbolic $3$-manifolds, Baldi-Ullmo \cite{BU} for related results in complex hyperbolic manifolds, and Filip-Fisher-Lowe \cite{FFL} for analogous results in the setting of closed real-analytic manifolds of negative sectional curvature. 

Combining these results with Theorem \ref{fin2} gives the following:
\begin{theorem}
   If a geometrically finite manifold $\cM$ contains infinitely many\footnote{Throughout the paper, ``infinitely many'' means infinitely many pairwise distinct.} maximal totally geodesic submanifolds of finite volume, then $\cM$ is arithmetic.
\end{theorem}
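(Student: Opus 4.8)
The plan is to reduce at once to the finite-volume case via Theorem~\ref{fin2} and then quote the available arithmeticity theorems. Suppose $\cal M=\Ga\ba X$ is geometrically finite and contains infinitely many pairwise distinct maximal totally geodesic submanifolds of finite volume (all of dimension at least two, by our standing convention). If $\cal M$ had infinite volume, then Theorem~\ref{fin2} would force only finitely many such submanifolds to exist, a contradiction. Hence $\cal M$ has finite volume; equivalently, $\Ga$ is a lattice in $G$.

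Next I would argue by cases on the isometry type of the rank-one symmetric space $X=\mathbb H^d_{\mathbb K}$. If $\mathbb K\in\{\mathbb H,\mathbb O\}$, then the finite-volume quotient $\cal M=\Ga\ba X$ is arithmetic by the theorem of Gromov--Schoen \cite{GS}, with no hypothesis on totally geodesic submanifolds needed. If $\mathbb K\in\{\mathbb R,\mathbb C\}$, then $\cal M$ is a finite-volume real or complex hyperbolic manifold containing infinitely many maximal totally geodesic submanifolds of finite volume, and the theorem of Bader--Fisher--Miller--Stover (\cite{BF1}, \cite{BF2}) then gives directly that $\cal M$ is arithmetic. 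In every case $\cal M$ is arithmetic, which is the assertion.

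The only point deserving a word of care is that the notion of ``maximal totally geodesic submanifold of finite volume'' used here agrees with the one appearing in \cite{BF1, BF2}: in the finite-volume setting, a totally geodesic submanifold of finite volume is automatically properly immersed (a standard fact, since a reductive subgroup carrying a finite-volume orbit on a finite-volume homogeneous space has a closed orbit), so the submanifolds supplied by the hypothesis are precisely of the kind to which the cited theorems apply, and maximality is taken in the same sense, namely maximal with respect to inclusion among proper totally geodesic submanifolds of dimension $\ge 2$. Beyond this bookkeeping there is no real obstacle: all the substance is contained in Theorem~\ref{fin2}, proved in the body of the paper, together with the external arithmeticity results, so this statement is a direct corollary.
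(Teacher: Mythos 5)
Your proposal is correct and is essentially the argument the paper intends: the theorem is stated as a direct combination of Theorem~\ref{fin2} (which forces $\cM$ to have finite volume when infinitely many such submanifolds exist) with the arithmeticity theorems of Gromov--Schoen in the quaternionic/octonionic case and of Bader--Fisher--Miller--Stover in the real/complex case. Your added remark on properness and the matching notion of maximality is harmless bookkeeping and does not change the argument.
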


\subsection*{Finiteness of totally geodesic boundaries}
We denote by $\partial_\infty X$ the geometric boundary of $X$, that is,
the set of equivalence classes of geodesic rays, where two rays are identified if they remain within bounded Hausdorff distance of each other.
For a totally geodesic subspace $Y\subset X$ of dimension at least two, we may regard its boundary $\partial_\infty Y$ as a subset of $\partial_\infty X.$
Let $\mathcal S_X$ denote the space of all such boundaries $\partial_\infty Y\subset \partial_\infty X$, equipped with the Chabauty-Hausdorff topology. 

For a totally geodesic submanifold $\cN=\Ga\ba \Ga Y$, we have $\cN\subset \core (\cM)$ if and only if $\partial_\infty Y \subset \La$ where $\La\subset \partial_\infty X$ is the limit set of $\Ga$. Using this criterion, we deduce the following consequence of Theorems \ref{p} and \ref{fin1}:
\begin{theorem}\label{SX} Let $\Ga<G$ be a geometrically finite, non-lattice subgroup of $G$, and $\La\subset \partial_\infty X$ be its limit set. Then
   the collection $$\{S\in \mathcal S_X: S\subset \La \text{ maximal} \}$$ consists of finitely many closed $\Ga$-orbits in $\mathcal S_X$.
\end{theorem}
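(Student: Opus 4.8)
The plan is to transfer the statement about the space $\mathcal S_X$ of ideal boundaries of totally geodesic subspaces into the already-established statements about totally geodesic submanifolds of $\core(\cM)$ via the correspondence $Y \mapsto \cN_Y := \Ga\ba \Ga Y$. The first step is to make rigorous the dictionary alluded to in the excerpt: for a totally geodesic subspace $Y\subset X$ of dimension at least two, one has $\partial_\infty Y\subset \La$ if and only if $\cN_Y\subset \core(\cM) = \Ga\ba\hull(\La)$. The forward direction is immediate since $\hull(\La)$ contains every complete geodesic with both endpoints in $\La$, hence contains $Y$ when $\partial_\infty Y\subset\La$; conversely if $Y\subset\hull(\La)$ then taking limits of geodesics in $Y$ shows $\partial_\infty Y\subset\overline{\La}=\La$. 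Under this dictionary, $S = \partial_\infty Y$ being maximal in $\{S'\in\mathcal S_X : S'\subset\La\}$ corresponds exactly to $\cN_Y$ being a maximal totally geodesic submanifold contained in $\core(\cM)$ (one must check that $\partial_\infty Y \subset \partial_\infty Y'$ with $Y'$ totally geodesic forces $Y\subset Y'$, which follows because a totally geodesic subspace of $X$ is determined by its boundary at infinity together with a base point, and any two totally geodesic subspaces with nested boundaries and intersecting interiors are nested — here $Y\cap Y'\ni$ limit-set geodesics, so the interiors meet).

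Next, since $\Ga$ is geometrically finite and non-lattice, $\cM$ is a geometrically finite manifold of infinite volume, so Theorems \ref{p} and \ref{fin1} apply: there are only finitely many maximal totally geodesic submanifolds $\cN_1,\dots,\cN_k$ contained in $\core(\cM)$, and each is properly immersed of finite volume. Writing $\cN_j = \Ga\ba \Ga Y_j$, the $\Ga$-orbit of $Y_j$ in the space of totally geodesic subspaces is discrete (this is precisely what properness of the immersion $\op{Stab}_\Ga(Y_j)\ba Y_j\to\cM$ gives, via Lemma \ref{OS}: $\Ga Y_j$ is closed in $X$ and the stabilizer acts cocompactly-up-to-finite-volume on $Y_j$). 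Passing to boundaries, the $\Ga$-orbit of $S_j := \partial_\infty Y_j$ in $\mathcal S_X$ is then closed: a sequence $\ga_n S_j \to S$ in the Chabauty–Hausdorff topology lifts (after passing to a subsequence and using that the $\ga_n Y_j$ all meet a fixed compact part of $\hull(\La)/\Ga$'s preimage) to $\ga_n Y_j \to Y$ with $\partial_\infty Y = S$, and discreteness of $\Ga Y_j$ forces $Y = \ga Y_j$ for some $\ga\in\Ga$, whence $S\in\Ga S_j$. Finally, every $S\in\mathcal S_X$ with $S\subset\La$ and $S$ maximal equals $\partial_\infty Y$ for some totally geodesic $Y$ with $\cN_Y\subset\core(\cM)$ maximal, hence $\cN_Y = \cN_j$ for some $j$, hence $Y\in\Ga Y_j$ and $S\in\Ga S_j$; so the collection in question is exactly $\Ga S_1\cup\dots\cup\Ga S_k$, a finite union of closed $\Ga$-orbits.

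The main obstacle I expect is the topological bookkeeping in the correspondence $Y\leftrightarrow\partial_\infty Y$ and the continuity/properness transfer between the space of totally geodesic subspaces of $X$ (with Chabauty topology on subsets of $X$) and $\mathcal S_X$ (with Chabauty–Hausdorff topology on subsets of $\partial_\infty X$). One needs that the map $Y\mapsto\partial_\infty Y$ is a $\Ga$-equivariant homeomorphism onto its image when restricted to totally geodesic subspaces of a fixed dimension whose interiors meet $\hull(\La)$ — injectivity can fail a priori for subspaces not meeting the convex hull (a boundary sphere of dimension $<\dim_\br X - \dim_\br\mathbb K$ may bound many parallel copies), so one must use the convex-core hypothesis to pin down $Y$ from $S$ and a point of $\hull(\La)$. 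Once this equivalence of topological $\Ga$-spaces is set up carefully, the finiteness and closedness of orbits is a formal consequence of Theorems \ref{p} and \ref{fin1}. The dimension-decomposition (over the at-least-two-dimensional strata) is harmless since there are only finitely many possible dimensions.
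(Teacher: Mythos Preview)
Your approach matches the paper's: translate maximal $S\subset\La$ to maximal $\cN_Y\subset\core(\cM)$, invoke Theorems \ref{p} and \ref{fin1} (equivalently Corollary \ref{Theoremfin3}), and verify each $\Ga S_j$ is closed by lifting Chabauty--Hausdorff convergence in $\mathcal S_X$ to convergence of the subspaces $\ga_n Y_j$ in $X$ and using that $\Ga Y_j$ is closed in $X$ (via Lemma \ref{OS}). The obstacle you flag is not one: Lemma \ref{yb} gives $Y=\hull(\partial_\infty Y)$ unconditionally, so $Y\mapsto\partial_\infty Y$ is injective on all totally geodesic subspaces of dimension at least two (there are no ``parallel copies'' in rank one), and continuity of $S\mapsto\hull(S)$ follows directly by tracking geodesics, as the paper spells out---your parenthetical about meeting a fixed compact part of $\hull(\La)$ is unnecessary.
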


\subsection*{Counting spheres in the limit set} Specializing to $G=\op{SO}(d+1,1)^\circ$ $(d\ge 2)$, we have
$X=\bH_{\br}^{d+1}$ and $\partial_\infty X= \br^{d}\cup\{\infty\}$.

Theorem \ref{SX} implies that if $\Ga<G$ is a geometrically finite non-lattice subgroup, then the collection of all $(d-1)$-round spheres contained in the limit set $\La$ forms a locally finite\footnote{This means that for any $\e>0$, there are only finitely many spheres in a fixed bounded region of Euclidean radii larger than $\e>0$} sphere packing consisting of finitely many $\Ga$-orbits.
Spheres arising from the totally geodesic boundary components of $\core (\cM)$
may be regarded as {\it visible} spheres, but there may also be {\it invisible} spheres, namely those lying in the limit set but not corresponding to boundary components of $\core (\cM)$. Given this local finiteness, the sphere counting results of (\cite{OS}, \cite[Theorem 7.5]{Oh}, see also \cite{Oh2}) apply and
 counts all such spheres, visible or invisible, contained in the limit set:

\begin{theorem} Let $\Ga<\op{SO}(d+1,1)^\circ$ be a geometrically finite subgroup with $d\ge 2$.  Assume that $\La\subset \br^{d}$ and that $\La$ contains a $(d-1)$-sphere. Then there exists $c>0$, depending only on $\Ga$, such that, as $t\to \infty$, the number $\mathsf N(\La, t)$ of all $(d-1)$-spheres contained in $\La$ of Euclidean radius at least $1/t$ satisfies
$$ \mathsf N(\La, t)\sim c\, {t^{\dim (\La)}}$$
where $\dim (\La) $ is the Hausdorff dimension of $\La$.
\end{theorem}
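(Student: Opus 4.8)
The plan is to reduce the asymptotic counting of $(d-1)$-spheres in $\La$ to the existing sphere-counting machinery of \cite{OS} and \cite[Theorem 7.5]{Oh}. That machinery counts, with a sharp asymptotic, the number of spheres of Euclidean radius at least $1/t$ in a \emph{single} $\Ga$-orbit $\Ga\cdot S_0$ of $(d-1)$-spheres, under the hypothesis that the orbit is locally finite (equivalently, the stabilizer of $S_0$ in $\Ga$ acts on $S_0$ with finite covolume inside the associated copy of $\SO(d,1)$), and under the hypothesis that $\Ga$ is geometrically finite with $\La\subset\R^d$; the resulting asymptotic is $\mathsf N(\Ga\cdot S_0,t)\sim c_{S_0}\, t^{\dim(\La)}$, with the exponent governed by the Hausdorff dimension $\dim(\La)=\delta_\Ga$ and the leading constant built from the Patterson--Sullivan density and the Burger--Roblin/skinning measure on the orbit. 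So the first step is to invoke Theorem \ref{SX}: since $\La$ contains a $(d-1)$-sphere, the set of all $(d-1)$-round spheres contained in $\La$ is nonempty, forms a locally finite sphere packing, and decomposes into \emph{finitely many} closed $\Ga$-orbits, say $\Ga\cdot S_1,\dots,\Ga\cdot S_k$ with $k\ge 1$.

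Second, I would note that each $(d-1)$-sphere $S\subset\La$ is the boundary of a totally geodesic copy of $\bH^d_\R$ in $\bH^{d+1}_\R$, and the condition $\partial_\infty Y\subset\La$ means exactly that the corresponding totally geodesic submanifold $\cN_S=\Ga\ba\Ga Y$ lies in $\core(\cM)$. Passing to a maximal such submanifold containing $\cN_S$ (which, since the minimal codimension in $\bH^{d+1}_\R$ is one, must be either $\cN_S$ itself or all of $\cM$; but $\Ga$ is a non-lattice so the latter is excluded when $S$ is a genuine sphere), Theorem \ref{p} gives that $\cN_S$ is properly immersed and of finite volume. Properness translates into local finiteness of the orbit $\Ga\cdot S$ in $\mathcal S_X$, and finiteness of volume of $\cN_S$ is precisely the finite-covolume hypothesis on $\Stab_\Ga(S)$ needed to run the counting theorem for that orbit. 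Thus for each $i=1,\dots,k$ we are entitled to apply \cite[Theorem 7.5]{Oh}, obtaining $\mathsf N(\Ga\cdot S_i,t)\sim c_i\, t^{\delta_\Ga}$ for some $c_i>0$ depending only on $\Ga$ and the orbit.

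Third, I would assemble the pieces: the total count is
\[
\mathsf N(\La,t)=\sum_{i=1}^{k}\mathsf N(\Ga\cdot S_i,t),
\]
since distinct orbits are disjoint and every sphere in $\La$ lies in one of them; each summand is $\sim c_i t^{\delta_\Ga}$, so $\mathsf N(\La,t)\sim c\, t^{\delta_\Ga}$ with $c=\sum_{i=1}^k c_i>0$, and $\delta_\Ga=\dim(\La)$ by the Patterson--Sullivan theory for geometrically finite groups (Sullivan, and in the non-lattice geometrically finite case the dimension of $\La$ equals the critical exponent). One should double-check the finitely many edge cases — e.g.\ that $\infty\in\La$ contributes at most finitely many ``spheres through infinity,'' i.e.\ affine hyperplanes, which can be absorbed into the same finite collection of orbits after a change of coordinates, or noted to be lower order — but these do not affect the leading asymptotic.

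The main obstacle, and the point requiring the most care, is verifying that the hypotheses of the counting theorems in \cite{OS} and \cite[Theorem 7.5]{Oh} are met orbit-by-orbit in the present generality. Those theorems were originally formulated for circle/sphere packings arising from geometrically finite groups, and one must check that the input they need — geometric finiteness of $\Ga$, $\La\subset\R^d$ bounded away from $\infty$ (or handled by inversion), local finiteness of each orbit, and finite covolume of each sphere stabilizer — is exactly what Theorems \ref{p} and \ref{SX} supply, and that the error term and leading constant are uniform enough that summing over the finitely many orbits is legitimate. In particular one needs the non-lattice hypothesis on $\Ga$ to guarantee that no $\cN_S$ degenerates to all of $\cM$ and that the skinning measures defining the $c_i$ are finite and positive; this is where geometric finiteness of $\Ga$ together with properness and finite volume of the totally geodesic submanifolds does the essential work.
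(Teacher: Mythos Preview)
Your proposal is correct and follows essentially the same approach as the paper: the paper deduces the theorem directly from Theorem \ref{SX}, which gives that the $(d-1)$-spheres in $\La$ form a locally finite packing consisting of finitely many closed $\Ga$-orbits, and then invokes the sphere-counting results of \cite{OS} and \cite[Theorem 7.5]{Oh} orbit-by-orbit. Your write-up supplies more detail on verifying the hypotheses (properness and finite covolume via Theorem \ref{p}, maximality because codimension is one, non-lattice because $\La\subsetneq \partial_\infty X$), but the strategy is identical; note also that your worry about ``spheres through infinity'' is moot, since the hypothesis $\La\subset\R^d$ already rules out $\infty\in\La$.
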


\begin{rmk}\rm 
Al Assal and Lowe, building on the works  \cite{MMO1} and \cite{BO1}, proved that a real-hyperbolic geometrically finite $3$-manifold of infinite volume cannot contain infinitely many compact {\it asymptotically geodesic} surfaces \cite{AL}. In view of their methods, we expect an analogous statement in general rank one: any geometrically finite, infinite-volume rank-one manifold cannot contain infinitely many compact asymptotically geodesic maximal submanifolds.
\end{rmk}

\subsection*{On the proof}
We emphasize that as far as we know, there are no direct geometric or topological arguments that establish
the properness and finiteness theorems (Theorems \ref{p} and \ref{fin1}). 
The key point is that these results arise as strict consequences of a much stronger rigidity phenomenon: the only available method for establishing properness and finiteness is to prove a rigidity-equidistribution  theorem for  totally geodesic submanifolds inside the convex core of a geometrically finite manifold (not necessarily of infinite volume). This stronger statement is the foundation for the entire argument:
\begin{theorem} [Theorem \ref{Theoremmax0}, Theorem \ref{eq}] \label{rigid}
    Let $\cM$ be geometrically finite.
  \begin{enumerate}
     \item The closure of any totally geodesic submanifold contained in  $\op{core}(\cal M)$ 
is a totally geodesic submanifold of finite volume.
 \item  If there exist infinitely many maximal totally geodesic submanifolds $\cN_i$ contained in $\core (\cM)$,
    then $\cM$ has finite volume and the sequence ${\cal N_i}$ becomes equidistributed in $\cM$ as $i\to \infty$.
  \end{enumerate}  \end{theorem}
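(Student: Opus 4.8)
I would lift everything to the frame bundle $\Ga\backslash G$ and argue by homogeneous dynamics; the two parts become the detailed Theorems \ref{Theoremmax0} and \ref{eq}. A totally geodesic submanifold $\cN\subset\cM$ of dimension $\geq 2$ lifts to a closed orbit $x_0H$ of a connected closed subgroup $H<G$ which, up to conjugacy, is one of the standard rank-one subgroups ($\SO(k,1)^\circ$, $\SU(k,1)$, $\Sp(k,1)$, or a copy of $F_4^{-20}$ sitting inside $G$); in particular $H$ is semisimple of rank one, generated by unipotent one-parameter subgroups, and contains its own horospherical subgroup $U$. Writing $\cN=\Ga\backslash\Ga Y$ with $x_0=\Ga g_0$ and $Y\subset X$ the corresponding totally geodesic subspace, the hypothesis $\cN\subset\core(\cM)$ is equivalent to $\partial_\infty Y\subset\La$, which forces $x_0H$ to meet the renormalized frame bundle $\RFM$ in a ``homogeneous-large'' subset (the lift of the convex hull, taken inside $\partial_\infty Y$, of the limit set of $\Stab_\Ga(Y)$). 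After replacing $x_0$ by such a frame, Part (1) is the computation of $\overline{x_0H}$, and Part (2) is the classification of weak-$*$ limits of invariant probability measures on a sequence of such closed orbits.

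\textbf{Part (1): orbit-closure rigidity.} Since $\core(\cM)$ is closed, $\overline{\cN}\subset\core(\cM)$, so its lift stays inside $\RFM$. The goal is to exhibit a connected closed $L$ with $H\leq L\leq G$ such that $\overline{x_0H}=x_0L$ and $\Ga_L:=\Ga\cap g_0Lg_0^{-1}$ is geometrically finite in $L$ with limit set all of $\partial_\infty Y_L$; such a subgroup is a lattice in $L$, so $\overline{\cN}=\Ga_L\backslash Y_L$ has finite volume. The mechanism would be the usual dichotomy. If $x_0H$ is already closed, one checks directly --- using geometric finiteness of $\cM$ and $\cN\subset\core(\cM)$ --- that $\Ga_H$ is geometrically finite with full limit set in $\partial_\infty Y$. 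Otherwise $x_0H$ has ``extra returns'' to a neighbourhood of itself, and feeding these into a unipotent-blowup / polynomial-drift argument should produce a one-parameter subgroup outside $H$ that enlarges the orbit; since $G$ has real rank one the chain of intermediate subgroups between $H$ and $G$ is short and explicitly classified, so the enlargement terminates at some $L$. Two things must be controlled along the way: (a) the trajectories used must not escape into the finitely many cusps of $\cM$, which I expect to need a Dani--Margulis-type quantitative non-divergence for unipotent flows valid in \emph{geometrically finite} rank-one manifolds, using recurrence to a fixed compact part of $\RFM$ and the bounded combinatorial structure of cusp neighbourhoods; and (b) at each stage the accumulating stabilizer must remain discrete and geometrically finite, which is where the structure theory of discrete subgroups of rank-one groups and the finiteness of the BMS measure (giving Poincar\'e recurrence on $\RFM$) come in. This would be the analogue of McMullen--Mohammadi--Oh and Benoist--Oh for $\mathbb H^3$, and of Lee--Oh for convex cocompact $\mathbb H^d$ with totally geodesic boundary, now for arbitrary geometrically finite rank-one $\cM$.

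\textbf{Part (2): finiteness and equidistribution.} Suppose there are infinitely many pairwise distinct maximal $\cN_i\subset\core(\cM)$. By Part (1) and maximality, each $\cN_i=\overline{\cN_i}$ is a closed totally geodesic submanifold of finite volume; let $\mu_i$ be the associated invariant probability measure on $\Ga\backslash G$, supported in $\RFM$. I would pass to a weak-$*$ limit $\mu_\infty$ along a subsequence. The non-divergence from Part (1), applicable because every $\supp\mu_i$ lies over $\core(\cM)$, together with finiteness of the BMS measure, should prevent escape of mass, so that $\mu_\infty$ is a probability measure; a Mozes--Shah-type argument in the geometrically finite setting then makes it homogeneous --- invariant under a connected closed $L\leq G$ and supported on a single closed orbit $x_\infty L$. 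If $L\neq G$, then $x_\infty L$ is a \emph{proper} closed homogeneous subset lying in $\core(\cM)$; there being only finitely many such $L$ up to $\Ga$-conjugacy, a single $L$ would handle infinitely many $i$, and the rigidity of Part (1) (there is no ``almost containment'': a submanifold whose defining subgroup is Chabauty-close to a conjugate of $L$ is genuinely contained in the corresponding $\cN_L$) would force $\cN_i\subseteq\cN_L$ for all large $i$, hence $\cN_i=\cN_L$ by maximality, contradicting distinctness. Therefore $L=G$, so $\mu_\infty$ is a $G$-invariant probability measure on $\Ga\backslash G$; such a measure exists only when $\Ga$ is a lattice, so $\cM$ has finite volume. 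The same dichotomy then rules out proper $L$ for \emph{every} subsequential limit, so $\mu_i$ converges to the unique $G$-invariant probability measure, which is exactly the statement that the $\cN_i$ equidistribute in $\cM$.

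\textbf{Main obstacle.} The hard part will be Part (1) in full rank-one generality: proving quantitative non-divergence of unipotent trajectories inside the convex core of a geometrically finite manifold --- including the complex, quaternionic and octonionic cases, where the horospherical subgroups are of Heisenberg type and the cusp neighbourhoods are far more intricate than in real hyperbolic space --- while at the same time ensuring, at every step of the inductive enlargement $H\subsetneq H_1\subsetneq\cdots\subsetneq L$, that the accumulating stabilizer stays discrete and geometrically finite. By comparison, the reduction to homogeneous dynamics, the upgrade ``closed orbit $+$ full limit set $\Rightarrow$ lattice $\Rightarrow$ finite volume'', and the Mozes--Shah deduction of finiteness and equidistribution should be comparatively formal once this rigidity engine is available.
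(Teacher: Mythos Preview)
Your overall framework is right --- lift to $\Ga\backslash G$, use that $\cN\subset\core(\cM)$ forces the relevant $H$-orbit into $\RFM$, and for Part~(2) pass to weak-$^*$ limits and apply a Mozes--Shah argument. Part~(2) of your sketch matches the paper closely.

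Where you diverge from the paper is Part~(1), and this is also where you misidentify the obstacle. You propose a \emph{topological} enlargement argument: polynomial drift/unipotent blowup producing additional invariance, iterated along a chain $H\subsetneq H_1\subsetneq\cdots\subsetneq L$, in the style of McMullen--Mohammadi--Oh and Benoist--Oh. The paper explicitly avoids this. It instead runs a \emph{measure-theoretic} argument: for a one-parameter unipotent $U<H$ and $x$ with $xU\subset\RFM$, form the time averages $\nu_T=\frac{1}{T}\int_0^T\delta_{xu_s}\,ds$; non-divergence plus geometric finiteness (compactness of $\core(\cM)\cap\cM_{\ge\eta}$) forces every weak-$^*$ limit to be a probability measure on $\Ga\backslash G$; then Ratner's measure classification together with the Dani--Margulis avoidance theorem applied to the ergodic decomposition yields the dichotomy ``$x\in\mathscr S(U)$ or $\overline{xU}=\Ga\backslash G$''. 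In the first case there is a minimal closed $L\supset U$ with $xL$ of finite $L$-volume, and Ratner's \emph{finite-volume} orbit-closure theorem, applied inside $xL$, gives $\overline{xU}=xL$. A short Baire/countability step upgrades from $U$ to $H$. No inductive chain of intermediate subgroups is ever built; Ratner, Dani--Margulis, and Mozes--Shah are invoked as black boxes.

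Two concrete corrections. First, the quantitative non-divergence you flag as the ``main obstacle'' is not new work here: it is already available for general rank-one geometrically finite quotients (Benoist--Oh and Buenger--Zheng), and the paper simply quotes it. The genuine content is the observation that, \emph{because} $xU\subset\RFM$ and $\core(\cM)\cap\cM_{\ge\eta}$ is compact, non-divergence upgrades to ``no escape of mass'', after which the entire finite-volume machinery applies. Second, your setup conflates two things: an arbitrary totally geodesic $\cN$ does \emph{not} lift to a closed orbit (closedness of $x_0H$ is equivalent to proper immersion, which is part of the conclusion); and $\cN\subset\core(\cM)$ gives $x_0H\subset\RFM$ in full, not merely that $x_0H$ meets $\RFM$ in a large set. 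The paper's route sidesteps the need to verify, at each stage of an enlargement, that the accumulating stabilizer stays discrete and geometrically finite --- a step that, as the paper remarks, becomes genuinely complicated once totally geodesic submanifolds no longer have codimension one.
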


Theorem \ref{rigid} is proved using unipotent dynamics on the homogeneous space $\Ga\ba G$. 
Fix a one-parameter diagonal subgroup $A<G$. The relevant subspace of $\Ga \ba G$ for  studying totally geodesic submanifolds contained in $\core (\cM)$ is the smallest closed subset containing all bounded $A$-orbits, which we denote by $\RFM$.
Understanding the closures of totally geodesic submanifolds in $\cal M$ amounts to describing the orbit closures of $xH$ contained in $\RFM$ for all simple non-compact subgroups $H<G$; this is given in Theorem \ref{Theoremmax} where we describe all orbit closures inside $\RFM$ for the action of any connected closed subgroup of $G$ generated by unipotent elements.
By Ratner's arguments \cite{RaGF}, this is reduced to the study of the closure of $xU\subset \RFM$ for
 a one-parameter unipotent subgroup $U=\{u_s\}$ of $H$ (see the proof of Theorem \ref{Theoremmax}).
 We use
the geometric finiteness hypothesis together with the non-divergence of unipotent flows  in the infinite volume rank one setting due to Benoist-Oh \cite{BO1} and Buenger-Zheng  \cite{BZ} to deduce that any weak-$^*$ limit $\nu$ of the sequence $$\nu_T=\frac{1}{T}\int_0^T \delta_{xu_s} \, ds ,$$ is a probability measure on $\Ga \ba G$, where $\delta_{xu_s}$
denotes the Dirac measure at ${xu_s}\in \Ga \ba G$.
We then apply  Ratner's measure classification theorem \cite{Ra} and the avoidance theorem of Dani-Margulis \cite{DM} to the ergodic components of $\nu$ to show that the closure of $xU$ is a homogeneous space. This constitutes the main step of the proof of Theorem \ref{rigid}(1).

For the proof of Theorem \ref{rigid}(2), we first deduce from Theorem \ref{rigid}(1) that any maximal totally geodesic submanifold inside $\core (\cM)$ gives rise to an $H$-invariant {\it probability} measure in $\Ga\ba G$ supported on a closed orbit $x\op{N}_G(H)$, where $\op{N}_G(H)$ denotes the normalizer of $H$, and
apply the Mozes-Shah theorem \cite{MS} on the limiting behavior of probability measures invariant under unipotent flows.

The earlier papers \cite{MMO1} and \cite{BO1} on geometrically finite real hyperbolic $3$-manifolds relied on topological arguments for unipotent dynamics.  This was feasible because, in that specific setting, all totally geodesic submanifolds have codimension one in the ambient manifold, a feature unique to real hyperbolic $3$-manifolds. In more general settings, where totally geodesic submanifolds have varying dimensions or maximal geodesic planes have codimension greater than one, extending topological methods becomes quite complicated. In contrast, our current paper leverages the full power of the measure rigidity theorems of Ratner (\cite{Ra}, \cite{Ra1}, \cite{RaGF}), Dani-Margulis \cite{DM}, and Mozes-Shah \cite{MS} on unipotent flows, allowing us to obtain, in our view,  rather surprising results in the geometry of geometrically finite manifolds.

Finally, we note that the geometric finiteness assumption on $\cM$ is essential: if $\cM$ is a $\Z$-cover of a compact hyperbolic $3$-manifold, then $\cM=\core (\cM)$, and
$\cM$ contains infinitely many dense geodesic surfaces.

\subsection*{Organization}
\begin{itemize}
    \item In Section 2, we review the structure of closed subgroups of rank-one Lie groups and introduce the collections $\mathscr H$ and $\mathscr H^*$ of closed subgroups of $G$ stabilizing  totally geodesic subspaces of $X$. 
    \item In Section 3, we relate these to immersed totally geodesic submanifolds of $\cM$. 
    \item In Section 4, we characterize those contained in the convex core of $\cM$ via the renormalized frame bundle $\RFM$.
 \item Sections 5-6 develop the orbit closure and equidistribution results for subgroups generated by unipotent one-parameter subgroups within $\RFM$.
 \item  Section 7 applies these results to prove the rigidity, properness and finiteness theorems announced in the introduction. 

\end{itemize}

\medskip 
\noindent{\bf Acknowledgements.} We would like to thank Subhadip Dey, Curtis McMullen and Yair Minsky for helpful comments.

\section{Closed subgroups of rank one Lie groups}\label{sec.b}
Let $G$ be a connected simple real algebraic group of rank one. 
In this section, we describe the structure of closed subgroups of $G$.
In particular, we introduce two natural families of subgroups $\mathscr H$ and $\mathscr H^*$ which will play a central role throughout the paper.

Let $\frak g$ denote the Lie algebra of $G$. Let $\theta$ be a Cartan involution of $\frak g$, and $\Theta$ the corresponding involution of $G$ which induces $\theta$.
Write $\frak g$ as the direct sum of $\pm 1$ eigenspaces of $\theta$:
\be\label{cd} \frak g=\frak k\oplus \frak p \ee
where $\frak k=\{\theta(x)=x\}$ and $\frak p=\{\theta(x)=-x\}$.
Let $K< G$ be the maximal compact subgroup with Lie algebra $\frak k$; equivalently, $K$ is the fixed point subgroup of $\Theta$.
Let $B$ denote the Killing form on $\frak g$, and define
the inner product $\langle\cdot, \cdot\rangle$ on $\frak g$:
$$
\langle x, y\rangle :=-B(x, \theta(y)), \quad x,y\in \frak g,
$$
with associated norm $\norm{\cdot}$.
Let $$\text{ $X:=G/K$ and $o=[K]\in X$.}$$
Since $\langle\cdot,\cdot\rangle$ is $\op{Ad}(K)$-invariant, it induces a left $G$-invariant and right $K$-invariant metric on $G$, which descends to a Riemannian metric on $X$, which we denote by $\sfd$.
The Riemannian symmetric space $(X,\sfd) $ is isometric to one of the real, complex, quaternionic, and octonionic hyperbolic spaces.

Let $\fa\subset \frak p$ be a Cartan subalgebra, i.e., a maximal abelian subalgebra of $\frak p$.
Since $G$ has real rank one, $\frak a$ is one-dimensional.
Let $A=\exp \fa$ and let $M=\op{C}_K(A)$ denote the centralizer of $A$ in $K$.
We parametrize $A=\{a_t:t\in \br\}$ so that $\sfd(o, a_to)=|t|$ for all $t\in \br$. 
Define the horospherical subgroups
$$ N^\pm=\{g\in G: a_{t}ga_{-t}\to e\text{ as }t\to\pm\infty\}$$
and the corresponding minimal parabolic subgroups
$$
P^\pm=MAN^\pm.
$$
For simplicity, we write $N=N^-$ and $P=P^-$.

Denote by $\partial_\infty X$ the boundary of $X$ at infinity; the set of equivalence classes of geodesic rays, where two rays are equivalent if they have finite Hausdorff distance. Every geodesic ray in $X$ is of the form $\{g a_{ t} o: t\ge 0\}$ for some $g\in G$.
The stabilizer of the equivalence class of $\{a_{t} o: t\ge 0\}$ is equal to $P$;
hence we may identify 
$$
G/P \simeq \partial_\infty X,$$
where $gP$ corresponds to the boundary point in $\partial_\infty X$ represented by the ray $\{ga_{ t}o: t\ge 0\}$.

\subsection*{Families $\mathscr{H}^*$ and $\mathscr {H}$}
A totally geodesic subspace $Y$ is a connected smooth submanifold of $X$ which contains all complete geodesics in $X$ tangent to $Y$.
For such $Y\subset X$, we define its stabilizer 
\begin{equation}\label{eq.gy}
G_Y:=\{g\in G: gY=Y\}.    
\end{equation}
We introduce two families of subgroups of $G$ that contain representatives
corresponding to conjugacy classes of $G_Y$. 
 For a subgroup $H<G$,
denote by $H^\circ$ its identity component, and by $\op{C}_G(H)$ and $\op{N}_G(H)$ the centralizer and normalizer of $H$ in $G$, respectively.

We define
$$
\mathscr H^*=\left\{ H < G:
\begin{array}{c}
     \text{ a connected simple $\Theta$-invariant }\\
     \text{non-compact closed subgroup of $G$ containing $A$} 
\end{array}
\right\}
$$
and
\begin{equation}\label{eq.h}
\mathscr H=\{\op{N}_G(H)^\circ: H\in \mathscr H^*\}.
\end{equation}

For a reductive subgroup $L<G$, let $L^{\op{nc}}$ denote the maximal connected normal semisimple subgroup of $L$ with no compact factors.
Since $G$ has rank one,  $L^{\op{nc}}$ is either trivial or  a simple non-compact algebraic subgroup of rank one. We have that $L=L^{\nc}\op{C}_L(L^{\nc})$
with $\op{C}_L(L^{\nc})$ compact.

The  following lemma  implies that the maps
$$H\mapsto H^{\nc}\quad  \text{ and }\quad  H\mapsto \op{N}_G(H)^\circ$$
define a bijection $\mathscr H\to \mathscr H^*$ and its inverse, respectively.
\begin{lem}\label{lem.h1}
Let $L<G$ be a connected reductive $\Theta$-invariant subgroup containing $A$.
Suppose that $L^{\nc}\ne \{e\}$, and let $H:=\op{N}_G(L^{\op{nc}})^\circ$.  Then  we have
 $$H\in \mathscr H, \; \; H^{\op{nc}}=L^{\op{nc}},\; \;\text{and}\;\;  H.o=L.o.$$
In particular, for any $H\in \mathscr H$, we have
$$H=\op{N}_G(H^{\op{nc}})^\circ\quad\text{and}\quad  H.o=H^{\op{nc}}.o.$$
\end{lem}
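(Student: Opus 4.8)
The plan is to first pin down the structure of $L$ using the rank-one hypothesis, then identify $H = \op{N}_G(L^{\nc})^\circ$ with the normalizer-component of its own noncompact part, and finally compare the orbit maps through $o$. As noted just before the statement, since $G$ has rank one and $L$ is reductive, we have the decomposition $L = L^{\nc}\,\op{C}_L(L^{\nc})$ with $\op{C}_L(L^{\nc})$ compact, and $L^{\nc}$ is a connected simple noncompact rank-one subgroup. The $\Theta$-invariance of $L$ forces $\Theta$-invariance of the characteristic subgroup $L^{\nc}$ (the maximal connected normal semisimple subgroup with no compact factors is preserved by any automorphism of $L$, in particular by $\Theta|_L$). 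Moreover, since $A \subset L$ is $\Theta$-invariant and $A$ is a maximal $\br$-split torus of $G$, one checks that $A \subset L^{\nc}$: indeed $A$ projects trivially to the compact factor $\op{C}_L(L^{\nc})$ modulo a finite group, so up to adjusting within $L$ we get $A \le L^{\nc}$, and since $L^{\nc}$ is $\Theta$-invariant and contains a conjugate of $A$, it contains $A$ itself. Hence $L^{\nc} \in \mathscr H^*$.

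Next I would show $H := \op{N}_G(L^{\nc})^\circ \in \mathscr H$ and $H^{\nc} = L^{\nc}$. Set $S := L^{\nc} \in \mathscr H^*$. By definition \eqref{eq.h}, $\op{N}_G(S)^\circ \in \mathscr H$, so $H \in \mathscr H$. For $H^{\nc} = S$: the normalizer $\op{N}_G(S)$ is an algebraic subgroup whose identity component $H$ is reductive (it contains the reductive $S$ as a normal subgroup and, since $S$ is simple, $H = S \cdot \op{C}_H(S)$ with $\op{C}_H(S)$ reductive; in rank one $\op{C}_H(S)$ must be compact, as it commutes with the $\br$-split torus $A \subset S$ and any noncompact part would enlarge the $\br$-rank beyond one). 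Therefore $H^{\nc}$, the maximal connected normal semisimple subgroup of $H$ with no compact factors, equals $S = L^{\nc}$. The ``in particular'' statement then follows by applying this with $L = H$: if $H \in \mathscr H$, write $H = \op{N}_G(S)^\circ$ for $S \in \mathscr H^*$; we have just shown $H^{\nc} = S$, hence $H = \op{N}_G(S)^\circ = \op{N}_G(H^{\nc})^\circ$.

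Finally, the orbit equality $H.o = L.o$. Since $A \subset L^{\nc} \subset L$ and $A \subset L^{\nc} \subset H$, and $L.o = L^{\nc}.o$ because $L = L^{\nc}\op{C}_L(L^{\nc})$ with the compact factor contained in $K$ up to conjugacy — more precisely $\op{C}_L(L^{\nc})$ is compact, $\Theta$-invariant, hence lies in a maximal compact subgroup; since $\op{C}_L(L^{\nc})$ centralizes $A$ and fixes the geodesic through $o$ in the direction of $A$, after adjusting we may take $o$ so that $\op{C}_L(L^{\nc}) \subset K$, giving $L.o = L^{\nc}.o$. The identical argument applied to $H = L^{\nc}\op{C}_H(L^{\nc})$ with $\op{C}_H(L^{\nc})$ compact yields $H.o = L^{\nc}.o$. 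Combining, $H.o = L^{\nc}.o = L.o$, and with $L = H$ this gives $H.o = H^{\nc}.o$. The main obstacle I anticipate is the orbit comparison: one must ensure the compact centralizers $\op{C}_L(L^{\nc})$ and $\op{C}_H(L^{\nc})$ genuinely fix $o$, which requires using that $o$ is the canonical basepoint $[K]$ and that these centralizers — being compact and $\Theta$-invariant, hence contained in $K$ by maximality of $K$ among compact subgroups — act trivially on $o$; this is where the $\Theta$-invariance hypothesis is doing real work rather than being cosmetic.
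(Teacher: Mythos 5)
Your overall route is the same as the paper's: show $L^{\nc}\in\mathscr H^*$, identify $H^{\nc}=L^{\nc}$ by comparing $\op{N}_G(L^{\nc})^\circ$ with $L^{\nc}\op{C}_G(L^{\nc})$ and using compactness of the centralizer, and get the orbit equality by putting the compact centralizer inside $K$. However, two of your justifications do not hold as written. First, the step $A\subset L^{\nc}$: you argue that $A$ "projects trivially to the compact factor modulo a finite group" (unjustified --- a one-parameter subgroup can have nontrivial, even dense, image in a compact torus), then pass to a conjugate of $A$ inside $L^{\nc}$, and conclude that since $L^{\nc}$ is $\Theta$-invariant and contains a conjugate of $A$ it contains $A$ itself. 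That last inference is false in general: a $\Theta$-invariant subgroup containing a conjugate $kAk^{-1}$ need not contain $A$ (take any $\Theta$-invariant $H\in\mathscr H^*$ and conjugate it by a generic $k\in K$). Since membership in $\mathscr H^*$, and everything downstream, requires the \emph{fixed} Cartan subgroup $A$, this is a genuine gap. The correct argument uses $\frak a\subset\frak p$: the decomposition $\frak l=\frak l^{\nc}\oplus\op{Lie}\bigl(\op{C}_L(L^{\nc})\bigr)$ is $\theta$-stable, so writing $Y\in\frak a$ as $Y_1+Y_2$ along it gives $Y_2\in\frak p\cap\op{Lie}\bigl(\op{C}_L(L^{\nc})\bigr)$, which vanishes because the exponential of a nonzero vector in $\frak p$ is unbounded while $\op{C}_L(L^{\nc})$ is compact; hence $\frak a\subset\frak l^{\nc}$. (The paper deduces $A\subset L^{\nc}$ directly from normality of $L^{\nc}$ in $L$.)

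Second, the orbit comparison. You cannot "adjust $o$": the basepoint $o=[K]$ is fixed and the statement concerns it. Moreover, your stated reason that the compact centralizer lies in $K$ "by maximality of $K$ among compact subgroups" is not valid --- an arbitrary compact subgroup lies in \emph{some} conjugate of $K$, not in $K$ itself. What forces containment in $K$ is exactly the $\Theta$-invariance, as in the paper: a $\Theta$-invariant closed subgroup $C$ satisfies $C=(C\cap K)\exp(\op{Lie}(C)\cap\frak p)$, and compactness forces $\op{Lie}(C)\cap\frak p=0$, so $C\subset K$ and $C$ fixes $o$. With these two repairs your argument becomes the paper's proof; the remaining steps (normalizer of the reductive $L^{\nc}$ commensurable with, or contained up to connectedness in, $L^{\nc}\op{C}_G(L^{\nc})$; compactness of $\op{C}_G(L^{\nc})$ via the rank-one restriction; the "in particular" clause by applying the main claim to $L=H$) are correct and match the paper.
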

\begin{proof} 
Since $L\supset A$ and $L^{\nc}$ is normal in $L$,
it follows that $L^{\nc}\supset A$.
    Since $L$ is $\Theta$-invariant, so is $L^{\op{nc}}$, and hence  $L^{\op{nc}}\in \mathscr H^*$. Therefore $H:= \op{N}_G(L^{\nc})^\circ \in \mathscr H$. 
On the other hand, the normalizer of a reductive subgroup $R<G$ is commensurable with $R \op{C}_G(R)$. Hence
$H$ is commensurable with $L^{\nc}\op{C}_G(L^{\nc})$. Since 
$\op{C}_G(L^{\nc})$ is compact, we have $H^{\op{nc}}=L^{\op{nc}}$.
Since $L$ and $H$ are $\Theta$-invariant,  the compact subgroup $\op{C}_G(L^{\nc})$
is also $\Theta$-invariant. This implies that  $\op{C}_G(L^{\nc})<K$;  otherwise, 
$\op{Lie}(\op{C}_G(L^{\nc}))\cap \frak p$ is non-zero and this is a contradiction since the image of any non-zero subspace of $\frak p$ is unbounded under the exponential map.
It follows that $H.o=L^{\nc}.o= L.o$
\end{proof}

\begin{lem}\label{inv}
    Let $H<G$ be a $\Theta$-invariant closed subgroup such that $\op{N}_G(H)\cap \exp \frak p\subset H$.  If $gH g^{-1}$ is $\Theta$-invariant for some $g\in G$, then there exists $h\in H$ such that
    $$k:=gh^{-1}\in K\quad\text{ and }\quad  gHg^{-1}= kHk^{-1}.$$
\end{lem}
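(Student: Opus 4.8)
The plan is to exploit the Cartan decomposition together with the hypothesis $\op{N}_G(H)\cap\exp\frak p\subset H$. First I would write $g$ using the Cartan (polar) decomposition $G=K(\exp\frak p)K$, but it is cleaner to use $G=K\exp\frak p$ (i.e. every $g$ equals $k\exp(X)$ with $k\in K$, $X\in\frak p$): the symmetric space $X=G/K$ identifies $\exp\frak p$ with $X$ via $X\mapsto \exp(X).o$, and each coset $gK$ has a unique representative in $\exp\frak p$. Thus write $g=k\,p$ with $k\in K$ and $p=\exp(X)\in\exp\frak p$. Since $k$ normalizes $\Theta$ (as $k\in K=\op{Fix}(\Theta)$), conjugation by $k$ preserves $\Theta$-invariance, so the hypothesis that $gHg^{-1}=kpHp^{-1}k^{-1}$ is $\Theta$-invariant is equivalent to $pHp^{-1}$ being $\Theta$-invariant. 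So I would reduce at once to the case $g=p=\exp(X)\in\exp\frak p$, and aim to show $p\in H$; then $h:=p\in H$, $k:=gh^{-1}=\mathrm{id}\cdot(\text{original }k)\in K$, and $gHg^{-1}=kHk^{-1}$ as claimed.

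Next, the heart of the argument: show that if $p=\exp(X)\in\exp\frak p$ and $pHp^{-1}$ is $\Theta$-invariant, then $p$ normalizes $H$; the hypothesis $\op{N}_G(H)\cap\exp\frak p\subset H$ then forces $p\in H$. To see $p\in\op{N}_G(H)$: apply $\Theta$ to the $\Theta$-invariant subgroup $pHp^{-1}$. Since $H$ is $\Theta$-invariant and $\Theta(\exp X)=\exp(\theta X)=\exp(-X)=p^{-1}$ for $X\in\frak p$, we get
$$\Theta(pHp^{-1})=\Theta(p)\Theta(H)\Theta(p)^{-1}=p^{-1}Hp.$$
Combining with $\Theta$-invariance, $pHp^{-1}=p^{-1}Hp$, hence $p^2$ normalizes $H$, i.e. $p^2\in\op{N}_G(H)\cap\exp\frak p\subset H$ (note $p^2=\exp(2X)\in\exp\frak p$). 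Now I need to upgrade "$p^2\in H$" to "$p\in H$". For this I would use that $H$ is a closed subgroup hence a Lie subgroup, and $p^2=\exp(2X)\in H$ together with $\Theta$-invariance: consider the one-parameter subgroup $\{\exp(tX):t\in\br\}$. Since $\exp(2X)\in H$ and $H$ is closed, the closure $\overline{\{\exp(2nX):n\in\z\}}$ lies in $H$; I want to conclude $\exp(X)\in H$. The cleanest route: since $p^2\in\op{N}_G(H)$ we already have (from $pHp^{-1}=p^{-1}Hp$) that $p$ conjugates $H$ to a $\Theta$-invariant subgroup with the same identity component considerations — actually, observe directly that $pHp^{-1}=p^{-1}Hp$ says $\Int(p)$ and $\Int(p)^{-1}$ agree on $H$, so $\Int(p)^2=\mathrm{id}$ on $H$; in particular $\Int(p)$ preserves $H$ if and only if... hmm. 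Let me instead argue at the Lie algebra level: $\op{Ad}(p^2)$ preserves $\fh=\op{Lie}(H)$, and since $\op{Ad}(p^2)=\exp(2\op{ad}X)=(\exp(\op{ad}X))^2=\op{Ad}(p)^2$ with $\op{Ad}(p)$ symmetric positive-definite (as $\op{ad}X$ is symmetric for $X\in\frak p$ w.r.t. $\langle\cdot,\cdot\rangle$), the unique positive-definite square root of $\op{Ad}(p^2)|_{\fh\oplus\fh^\perp}$ shows $\op{Ad}(p)$ preserves $\fh$; hence $\Int(p)$ preserves $H^\circ$, and a short argument with components (using $p^2\in H$) gives $p\in\op{N}_G(H)\cap\exp\frak p\subset H$.

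The main obstacle I anticipate is precisely this last upgrade from $p^2\in H$ to $p\in H$ — i.e. the "square root" step. The spectral-theory argument above (positive-definite symmetric operators have unique positive-definite square roots, and $\op{Ad}(p)$ is the unique such root of $\op{Ad}(p^2)$, so it must preserve any subspace $\op{Ad}(p^2)$ preserves, in particular $\fh$) is the clean way to handle the identity component; one then needs to check $\op{N}_G(H)\cap\exp\frak p\subset H$ applies to $p$ itself, which follows once $p\in\op{N}_G(H)$. I would double-check the edge case where $H$ is not connected, using that $p^2\in H$ pins down the correct component. Everything else — the Cartan decomposition reduction, the computation $\Theta(\exp X)=\exp(-X)$, and the consequent identity $pHp^{-1}=p^{-1}Hp$ — is routine.
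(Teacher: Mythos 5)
Your first two steps coincide with the paper's proof in substance: writing $g=k\exp(X)$ with $k\in K$, $X\in\frak p$, the element $p^{2}=\exp(2X)$ (up to inverse, the paper's $n_g=g^{-1}\Theta(g)$) lies in $\op{N}_G(H)\cap\exp\frak p\subset H$, and the whole lemma reduces to upgrading $\exp(2X)\in H$ to $\exp(X)\in H$. Where you diverge is in how you perform this upgrade, and that is where your argument has a gap. Your spectral square-root argument is correct as far as it goes: $\op{Ad}(p)$ is the unique positive-definite symmetric square root of $\op{Ad}(p^{2})$, so it preserves $\frak h$ and hence $p\in\op{N}_G(H^{\circ})$. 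If $H$ is connected this finishes the proof, since then $pHp^{-1}=H$ and the hypothesis $\op{N}_G(H)\cap\exp\frak p\subset H$ gives $p\in H$. But the lemma is stated for an arbitrary closed $\Theta$-invariant subgroup $H$, and your ``short argument with components'' is precisely the missing piece: from $p\in\op{N}_G(H^{\circ})$, $p^{2}\in H$ and $pHp^{-1}=p^{-1}Hp$ it is not formal that $p$ normalizes all of $H$ --- conjugation by $p$ could a priori carry $H$ to a different closed subgroup with the same identity component, with $\op{Int}(p)$ merely swapping the two --- and no argument for ruling this out is supplied.

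The paper closes the step in one line, with no operator-theoretic square root and no component bookkeeping: since $H$ is closed and $\Theta$-invariant, it has the global Cartan decomposition $H=(K\cap H)\exp(\frak p\cap\frak h)$, and by uniqueness of the polar decomposition in $G$ this gives $H\cap\exp\frak p=\exp(\frak p\cap\frak h)$ (this is exactly the computation in the proof of Lemma \ref{ngh}). Hence $\exp(2X)\in H$ forces $2X\in\frak p\cap\frak h$, so $X\in\frak h$ and $h:=\exp(X)\in H$ directly; the membership question is settled linearly at the Lie algebra level, and one never needs to know first that $p\in\op{N}_G(H)$. I would replace your square-root detour by this argument; alternatively, if you keep your route, you must either restrict to connected $H$ (which does cover all of the paper's applications, where $H\in\mathscr H^{*}\cup\mathscr H$) or actually supply the component argument, which as far as I can see again requires the Cartan decomposition of $H$ itself.
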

\begin{proof} 
Let $\frak h$ be the Lie algebra of $H$.
Since $H$ is $\Theta$-invariant, we have $$H=(K\cap H) \exp (\frak p\cap \frak h) .$$ 
    Since $\Theta(g) H \Theta(g)^{-1}=gHg^{-1}$, we have $n_g:=g^{-1}\Theta(g)\in \op{N}_G(H)$.
    Since $\Theta(n_g)=n_g^{-1}$,  we have $n_g\in \exp \frak p$.
    By the hypothesis $\op{N}_G(H)\cap \exp \frak p\subset H$, we have $n_g\in H$.
    Writing $g=k\exp(y/2)$ for $k\in K$ and $y\in\frak p$, we have $n_g=\exp(-y)\in H$. It follows that $y\in \frak h$, and hence $h:=\exp (y/2)\in H$.  Since
    $k= gh^{-1} $, this proves the claim.
\end{proof}

\begin{lem}\label{ngh}
    For any $H\in \mathscr H^*\cup \mathscr H$, we have
   $\op{N}_G(H) \cap \exp \frak p\subset H$. 
\end{lem}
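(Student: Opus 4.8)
The statement to prove is: for any $H\in\mathscr H^*\cup\mathscr H$, we have $\op{N}_G(H)\cap\exp\frak p\subset H$. The plan is to treat the two families separately but reduce $\mathscr H$ to $\mathscr H^*$ using Lemma \ref{lem.h1}. First consider $H\in\mathscr H^*$, so $H$ is a connected simple $\Theta$-invariant non-compact closed subgroup containing $A$. Let $p=\exp(y)\in\op{N}_G(H)$ with $y\in\frak p$, and I want to show $p\in H$. Since $H$ is simple, $\op{N}_G(H)/H\op{C}_G(H)$ is finite, so after replacing $p$ by a suitable power $p^n=\exp(ny)$ I may assume $p\in H\op{C}_G(H)$; writing $p=h c$ with $h\in H$, $c\in\op{C}_G(H)$, it suffices to treat the case $p\in H\op{C}_G(H)$. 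The key structural input is that $\op{C}_G(H)$ is compact: indeed $H$ contains $A$, hence contains a noncompact one-parameter subgroup, and in rank one the centralizer of a noncompact simple subgroup cannot contain any $\exp(\frak p)$-direction (same argument as in Lemma \ref{lem.h1}: the image of a nonzero subspace of $\frak p$ under $\exp$ is unbounded, contradicting compactness of $\op{C}_G(H)$ were it not $\Theta$-invariant — and here $\Theta$-invariance of $\op{C}_G(H)$ follows from $\Theta$-invariance of $H$). So $\op{C}_G(H)<K$.

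Now the core of the argument: I have $\exp(y)=hc$ with $y\in\frak p$, $h\in H$, $c\in\op{C}_G(H)\subset K$. Apply $\Theta$: since $\Theta(\exp y)=\exp(-y)=(\exp y)^{-1}$ and $\Theta$ fixes $K$ pointwise on... no — $\Theta$ does not fix $K$ pointwise, but $\Theta(c)=c$ is false in general. Instead I use the Cartan/polar structure directly: write $h=k_H\exp(z)$ with $k_H\in K\cap H$ and $z\in\frak p\cap\frak h$ (using $\Theta$-invariance of $H$, as in Lemma \ref{inv}). Then $\exp(y)=k_H\exp(z)c=k_H c\exp(z)$ since $c$ centralizes $H\ni\exp(z)$. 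By uniqueness in the decomposition $G=K\exp(\frak p)$ — every element of $G$ is uniquely $k\exp(w)$ with $k\in K$, $w\in\frak p$ — and since $k_Hc\in K$ and both $y,z\in\frak p$, we conclude $k_Hc=e$ and $y=z\in\frak p\cap\frak h\subset\frak h$. Hence $\exp(y)\in H$, which is what I wanted (modulo the power issue: $\exp(ny)\in H$ forces $ny\in\frak h$, hence $y\in\frak h$, hence $\exp(y)\in H$, so the reduction is clean).

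For $H\in\mathscr H$, by Lemma \ref{lem.h1} we have $H=\op{N}_G(H^{\nc})^\circ$ with $H^{\nc}\in\mathscr H^*$, and $H$ is $\Theta$-invariant (its Lie algebra is the normalizer of $\Theta$-invariant $\frak h^{\nc}$, hence $\Theta$-invariant) and contains $A$. Suppose $\exp(y)\in\op{N}_G(H)$ with $y\in\frak p$. Since $\op{N}_G(H)\subset\op{N}_G(H^{\nc})$ (as $H^{\nc}$ is characteristic in $H$), we get $\exp(y)\in\op{N}_G(H^{\nc})$, and the previous paragraph applied to $H^{\nc}\in\mathscr H^*$ gives $y\in\frak h^{\nc}\subset\frak h$, so $\exp(y)\in H$. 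Actually it is even cleaner: $\exp(y)\in\op{N}_G(H^{\nc})\cap\exp\frak p\subset H^{\nc}\subset H$ directly by the $\mathscr H^*$ case. So both cases follow, with the $\mathscr H^*$ case being the substantive one.

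\textbf{Main obstacle.} The only real subtlety is making sure the reduction "$\op{N}_G(H)/(H\op{C}_G(H))$ finite, so pass to a power" is legitimate for the $\exp\frak p$ elements — one must check that $\exp(y)\in\op{N}_G(H)$ with $\exp(ny)\in H$ implies $\exp(y)\in H$, which works because $\exp(ny)\in H\cap\exp\frak p=\exp(\frak p\cap\frak h)$ forces $ny\in\frak h$ (using that $\exp|_{\frak p}$ is injective and $H$ is $\Theta$-invariant so $H\cap\exp\frak p=\exp(\frak p\cap\frak h)$), hence $y\in\frak h$. The rest is the standard uniqueness of the Cartan decomposition $G=K\exp\frak p$ together with the already-used fact (borrowed verbatim from Lemma \ref{lem.h1}) that a compact $\Theta$-invariant subgroup lies in $K$.
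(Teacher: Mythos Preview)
Your argument is correct, and at its core it uses the same two ingredients as the paper: the uniqueness of the Cartan decomposition $G=K\exp\frak p$ and the fact that $\op{C}_G(H)\subset K$. However, the paper takes a more direct route that avoids both your finite-index/power reduction and the case split between $\mathscr H^*$ and $\mathscr H$. The paper simply observes that $H':=\op{N}_G(H)$ is itself $\Theta$-invariant (since $H$ is), hence admits its own Cartan decomposition $H'=(K\cap H')\exp(\frak p\cap\frak h')$; by uniqueness of the global decomposition this gives $H'\cap\exp\frak p=\exp(\frak p\cap\frak h')$. Then, since $\frak h'=\frak h+\op{Lie}(\op{C}_G(H))$ with $\op{Lie}(\op{C}_G(H))\subset\frak k$, one reads off $\frak p\cap\frak h'=\frak p\cap\frak h$ immediately at the Lie-algebra level, yielding $H'\cap\exp\frak p=\exp(\frak p\cap\frak h)\subset H$ for both families simultaneously. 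Your approach pays for working at the group level by needing the finite-index step to land inside $H\op{C}_G(H)$ (and then a separate reduction of $\mathscr H$ to $\mathscr H^*$); the paper's Lie-algebra computation sidesteps both of these.
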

\begin{proof} 
  Let $H\in \mathscr H^*\cup \mathscr H$.
  Set $H':=\op{N}_G(H)$
  and  $\frak h'=\op{Lie} (H')$. 
  Since $H$ is $\Theta$-invariant,
  so is $H'$. Hence $H'$ admits the Cartan decomposition
   $H'=(K\cap{H'})\exp(\frak p\cap\frak h')$, and 
 by the uniqueness of the Cartan decomposition,
$$
H'\cap\exp(\frak p)=\exp(\frak p\cap\frak h').
$$
Now for $H\in \mathscr H^*\cup \mathscr H$, $\op{C}_G(H)$
is a subgroup of $K$ and hence its Lie algebra is contained in $\frak k$. Therefore $$\frak p\cap \frak h'=\frak p \cap (\frak h \oplus\op{Lie}(\op{C}_G(H)) =\frak p\cap \frak h.$$
Thus
$$H'\cap\exp(\frak p) = \exp(\frak p\cap\frak h)\subset H.$$
\end{proof}

\begin{lem}\label{lem.e1}
    Let $L$ be a connected reductive subgroup of $G$ containing some $H\in\mathscr H^*$.
    Then $L$ is $\Theta$-invariant.
\end{lem}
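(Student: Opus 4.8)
The plan is to exploit the hypothesis that $L$ contains a subgroup $H\in\mathscr H^*$, which is by definition a connected simple $\Theta$-invariant non-compact subgroup containing $A$. In particular $L$ contains $A=\exp\fa$, and since $L$ is reductive and connected, we may hope to recover $\Theta$-invariance from the fact that $L$ is a ``large enough'' reductive subgroup containing the maximal split torus $A$. The natural strategy is a Lie-algebra argument: let $\fl=\Lie(L)$, and show $\theta(\fl)=\fl$ by decomposing $\fl$ into root spaces for $\ad(\fa)$ and showing each summand is $\theta$-stable or gets swapped to another summand already in $\fl$.

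Concretely, first I would fix the restricted root space decomposition $\gfrak=\gfrak_{-2\alpha}\oplus\gfrak_{-\alpha}\oplus\gfrak_0\oplus\gfrak_\alpha\oplus\gfrak_{2\alpha}$ relative to $\fa$, where $\gfrak_0=\fm\oplus\fa$ and $\theta$ interchanges $\gfrak_{\pm\alpha}$ and $\gfrak_{\pm 2\alpha}$. Since $A\subset L$, the subalgebra $\fl$ is $\ad(\fa)$-invariant, hence decomposes as $\fl=\bigoplus_{\lambda}(\fl\cap\gfrak_\lambda)$. Write $\fl_\lambda:=\fl\cap\gfrak_\lambda$. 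To prove $\theta(\fl)=\fl$ it suffices to show $\theta(\fl_\lambda)=\fl_{-\lambda}$ for each $\lambda\in\{\pm\alpha,\pm2\alpha\}$ (the $\gfrak_0$-part is automatically handled since $\theta$ preserves $\gfrak_0$, so $\theta(\fl_0)$ is a subspace of $\gfrak_0$; one still must check $\theta(\fl_0)\subseteq\fl$, which I expect to follow because $\fl_0\supseteq\fm_H\oplus\fa$ for the $\Theta$-invariant $H$ and the reductivity of $L$ forces $\fl_0$ to be reductive in $\gfrak_0=\fm\oplus\fa$, hence $\theta$-stable — $\fm$ being the Lie algebra of the compact group $M$). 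The key input is that $H\in\mathscr H^*$ already contributes a $\theta$-stable ``core'': $\fh$ meets $\gfrak_{\pm\alpha}$ (and possibly $\gfrak_{\pm2\alpha}$) in $\theta$-paired subspaces, since $\fh=(\fh\cap\fk)\oplus(\fh\cap\fp)$.

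The heart of the argument is then the following: given the nilpotent radical pieces $\fl_\alpha,\fl_{2\alpha}$ of $\fl$, use reductivity of $L$ to pair them with opposite pieces. Reductivity means $\fl$ has a Cartan-type decomposition relative to $\Ad(L\cap K_L)$ for its own maximal compact, but more usefully: a connected reductive subgroup $L$ containing $A$ has the property that its unipotent radical is trivial, so for every nonzero $\xi\in\fl_\alpha$ there must exist a compensating element in $\fl_{-\alpha}$ (else $\fl$ would contain a nontrivial nilpotent ideal after taking the span of $\ad(\fa)$-eigenspaces of one sign). I would make this precise by invoking the structure theory: $L=L^{\nc}\cdot\op{C}_L(L^{\nc})$ with $\op{C}_L(L^{\nc})$ compact (as quoted in the excerpt), and $L^{\nc}$ is a rank-one simple subgroup containing $A$ and containing $H$. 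A connected simple subgroup of $G$ of rank one containing the ambient $A$ is automatically $\Theta$-invariant — this is essentially because its symmetric space $L^{\nc}/(L^{\nc}\cap K)$ embeds totally geodesically and $A$ determines a geodesic through $o$; alternatively, $L^{\nc}$ is generated by $A$ together with $N^\pm\cap L^{\nc}$, and $\theta$ swaps these while preserving $A$, so $\theta(L^{\nc})=L^{\nc}$. Finally, $\op{C}_L(L^{\nc})$ is compact, hence (being connected) lies in a maximal compact, and since it centralizes $A\subset L^{\nc}$ it normalizes $H^{\nc}=L^{\nc}$; applying the argument in the proof of Lemma \ref{lem.h1} — a $\Theta$-invariant reductive subgroup's centralizer, if compact, is $\Theta$-invariant — I conclude $\op{C}_L(L^{\nc})\subset K$ is $\Theta$-invariant, whence $L=L^{\nc}\cdot\op{C}_L(L^{\nc})$ is $\Theta$-invariant.

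\textbf{Main obstacle.} The delicate point is establishing that $\op{C}_L(L^{\nc})$ is $\Theta$-stable without assuming $L$ itself is; this cannot be read off formally from compactness alone (a compact subgroup of $G$ need not be $\Theta$-invariant, only conjugate into $K$). The resolution should be that $\op{C}_L(L^{\nc})$ centralizes $A$, hence lies in $\op{C}_G(A)$; but $\op{C}_G(A)=MA$ up to finite index and $M\subset K$, so any subgroup of $\op{C}_G(A)$ that is compact actually lies in $M\subset K$ and is therefore automatically $\Theta$-invariant. I would need to verify carefully that $\op{C}_L(L^{\nc})\subseteq \op{C}_G(A)$ — which holds because $A\subseteq L^{\nc}$ — and that compact subgroups of $MA$ lie in $M$, which is immediate since $MA\cong M\times A$ with $A\cong\RR$ torsion-free and non-compact. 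That chain of inclusions is the crux; the rest is bookkeeping with the root-space decomposition.
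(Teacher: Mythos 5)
Your reduction of the problem is fine as far as it goes: since $H\subset L$ is noncompact simple, $\frak h$ projects trivially to the center and to the compact factors of $\fl$, so $H\subset L^{\nc}$ and in particular $A\subset L^{\nc}$; and the centralizer factor is handled correctly in your "main obstacle" paragraph, since $\op{C}_L(L^{\nc})\subset\op{C}_G(A)=M A$ and a compact subgroup of $MA$ lies in $M\subset K$, hence is pointwise fixed by $\Theta$ (one can even avoid the compactness question by noting $\op{C}_L(L^{\nc})\subset\op{C}_G(H)\subset M$, using that elements of $MA$ centralizing a nontrivial unipotent of $H$ have trivial $A$-part). The genuine gap is the remaining claim, which is the heart of the matter: that $L^{\nc}$ is $\Theta$-invariant. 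Your "alternative" justification is circular: $\theta$ does carry $L^{\nc}\cap N^{+}$ into $N^{-}$, but nothing guarantees that $\theta(L^{\nc}\cap N^{+})$ lies in $L^{\nc}$ at all --- that containment is exactly the $\Theta$-invariance you are trying to prove. Your first justification ("its symmetric space embeds totally geodesically and $A$ determines a geodesic through $o$") is an appeal to the Karpelevich--Mostow totally geodesic orbit theorem, but as written it is only a sketch: you would still have to show (i) that the totally geodesic orbit $Y'$ of $L^{\nc}$ passes through $o$ (this needs an argument, e.g.\ that the axis of the hyperbolic flow $A$ restricted to $Y'$ is forced to be $A.o$), and (ii) that $L^{\nc}$ coincides with the canonical $\Theta$-invariant noncompact part of $G_{Y'}^{\circ}$, which requires ruling out proper simple subgroups acting transitively on $Y'$ (e.g.\ via the fact that the identity component of the isometry group of a symmetric space of noncompact type is the adjoint group). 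None of this is in the proposal, and the claim you assert --- a connected noncompact simple subgroup containing $A$ is automatically $\Theta$-invariant --- is essentially the lemma itself in its core case, so the hardest step is being assumed. The same problem sinks the abandoned Lie-algebra plan: reductivity does give $\dim(\fl\cap\gfrak_{\alpha})=\dim(\fl\cap\gfrak_{-\alpha})$, but it cannot give $\theta(\fl\cap\gfrak_{\alpha})=\fl\cap\gfrak_{-\alpha}$, since reductivity is conjugation-invariant while $\Theta$-invariance is not; likewise a reductive subalgebra of $\fm\oplus\fa$ need not be $\theta$-stable.

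For comparison, the paper avoids this entirely by quoting Mostow's self-adjointness theorem: there is a single $g\in G$ such that both $gHg^{-1}$ and $gLg^{-1}$ are $\Theta$-invariant; then Lemmas \ref{inv} and \ref{ngh} (which use precisely that $\op{N}_G(H)\cap\exp\frak p\subset H$ for $H\in\mathscr H^*$) let one write $g=kh$ with $k\in K$ and $h\in H\subset L$, so $gLg^{-1}=kLk^{-1}$, and $\Theta$-invariance transfers back to $L$ by conjugating by $k^{-1}\in K$. If you want to salvage your structural route, you must supply a genuine proof of the $\Theta$-invariance of $L^{\nc}$ --- either by this Mostow-plus-Lemma-\ref{inv} mechanism applied to the pair $(H,L^{\nc})$, or by carrying out the totally geodesic orbit argument (i)--(ii) in full; as it stands the proposal does not prove the lemma.
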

\begin{proof}
 By \cite{Mos}, there exists $g\in G$ such that both $gHg^{-1} $ and $ gLg^{-1}$ are $\Theta$-invariant.  By applying Lemmas \ref{inv} and \ref{ngh} to $H$ and $gHg^{-1}$,  we obtain $h\in H$ such that $k:=gh^{-1}\in K$ and $gHg^{-1}=kHk^{-1}$.
    Since $g=kh\in kL$, we have $gLg^{-1}=kLk^{-1}$.
    Hence $kLk^{-1}$ is $\Theta$-invariant. This implies that $L$ is $\Theta$-invariant.
\end{proof}

\subsection*{Unimodular connected subgroups of $G$}
A closed subgroup of $G$ is unimodular if its left and right Haar measures coincide. A closed subgroup admitting a lattice is a unimodular subgroup \cite{Rag}.
We will describe all unimodular closed subgroups of $G$ using the following lemma.
\begin{lem}\label{vpp}
    Let $V<N$ be a non-trivial subgroup.
    \begin{enumerate}
        \item  If $g\in G$ satisfies $gVg^{-1}\cap V\ne \{e\}$, then $g\in P$.
        \item The normalizer of $V$ in $G$ is contained in $P$.
 \end{enumerate}
 \end{lem}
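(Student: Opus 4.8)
The plan is to work inside the horospherical subgroup $N$, using the fact that $N$ is a group on which $A$ acts by contraction/expansion, together with the Bruhat decomposition $G = P \sqcup PwN$ (where $w$ represents the nontrivial Weyl element). For part (1), suppose $g \notin P$; then by Bruhat we may write $g = p_1 w n_1$ with $p_1 \in P$, $n_1 \in N$. Since conjugation by elements of $P$ preserves $N$ and normalizes $V$ up to the $P$-action, the condition $gVg^{-1} \cap V \neq \{e\}$ would reduce to the statement that $w$-conjugation of some nontrivial element of $N$ lands back in $N$ after adjusting by $N$-translations. The key structural input is that $w N w^{-1} = N^+$ and $N \cap N^+ = \{e\}$, so that no nontrivial element of $N$ can be conjugated by $w$ (even after left/right multiplication by elements of $N$) back into $N$ unless it is trivial. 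Making this precise is the crux: one takes $v \in V \setminus\{e\}$ with $g v g^{-1} = v' \in V \subset N$, writes everything out in Bruhat coordinates, and derives that the $PwN$-component must vanish, forcing $g \in P$.

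More concretely, I would argue as follows. Write $g = u_1 a m w u_2$ or simply use $g \notin P \iff g \in PwN$, so $g = pwn$ with $p\in P$, $n\in N$. Then for $v\in V$, $gvg^{-1} = p w (nvn^{-1}) w^{-1} p^{-1}$. Now $nvn^{-1}\in N$, and $w N w^{-1} = N^+$, so $w(nvn^{-1})w^{-1}\in N^+$, hence $gvg^{-1}\in pN^+p^{-1}$. Since $p\in P = MAN$ and $M,A$ normalize $N^+$ while $N$-conjugation sends $N^+$ into $PwP$-type sets, one checks that $pN^+p^{-1}\subset P$ forces $nvn^{-1} = e$, i.e. $v = e$; more carefully, $pN^+p^{-1}\cap N$ can only contain $e$ because conjugating a nontrivial element of $N^+$ by an element of $N$ lands outside $N$ (its Bruhat cell is $PwP$, disjoint from $N\subset P$). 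Thus if $gvg^{-1}\in V\subset N$ is nontrivial we reach a contradiction, proving $g\in P$.

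Part (2) is then immediate from part (1): if $g\in \op{N}_G(V)$, then $gVg^{-1} = V$, and since $V\neq\{e\}$ we have $gVg^{-1}\cap V = V \neq \{e\}$, so $g\in P$ by part (1). Hence $\op{N}_G(V)\subset P$.

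The main obstacle I anticipate is keeping the Bruhat-coordinate bookkeeping honest when $N$ is nonabelian (as happens for $\SU(d,1)$, $\Sp(d,1)$, $F_4^{-20}$, where $N$ is a Heisenberg-type group): the claim ``conjugating a nontrivial element of $N^+$ by an element of $N$ leaves $N$'' needs the precise statement that $N^+\cdot N$-type products sit in the big Bruhat cell $PwP$ and that this cell is disjoint from $P\supset N$. A clean way to sidestep explicit coordinates is to invoke that $N = N^-$ acts on $\partial_\infty X = G/P$ fixing the point $w P$ (the ``point at infinity opposite to $o$''), that $N^+$ is the unipotent radical of the opposite parabolic $P^+ = w P w^{-1}$, and that the only element of $N^+$ fixed under further conjugation constraints is the identity — equivalently, use that $V<N$ nontrivial has $\overline{Vo}\ni \xi_0$ for the repelling fixed point $\xi_0 = wP$ of $A$, and that $g\xi_0 = \xi_0$ forces $g\in \op{Stab}_G(\xi_0) = P^+$, combined with a symmetric argument; but the cleanest route remains the direct Bruhat computation sketched above, and I would present it that way.
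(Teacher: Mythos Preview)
Your proposal is correct and follows essentially the same route as the paper: both use the rank-one Bruhat decomposition and reduce to the fact that $P\cap wPw^{-1}=P\cap P^{+}=MA$ contains no nontrivial unipotent element. The paper's write-up is slightly more economical---it works with $g=p_1wp_2\in PwP$ and observes directly that $p_1^{-1}(gvg^{-1})p_1=w(p_2vp_2^{-1})w^{-1}$ would be a nontrivial unipotent in $wPw^{-1}\cap P$, bypassing your decomposition $p=man_0$ and the separate analysis of $N$-conjugation of $N^+$; in particular your concern about nonabelian $N$ is unnecessary, since the argument only uses that $N$ is normal in $P$ and that $N^+\cap P=\{e\}$.
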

\begin{proof}
 By the Bruhat decomposition in rank one, we have $G=P\cup PwP$ were $w$ is a Weyl element such that $wPw^{-1}=P^+$. Suppose that
 for some $g=p_1 w p_2\in PwP$, the intersection $gVg^{-1}\cap V$ is non-trivial.
 Since $p\in P$ and $V<P$, this implies that 
$wPw^{-1}\cap P $ contains a nontrivial unipotent element. This is a contradiction, since $wPw^{-1}\cap P$ is equal to the centralizer of $A$ and has no unipotent element in the rank one setting. This proves (1). (2) follows from (1).
\end{proof}

\begin{lem}\label{pppp} Any connected closed unimodular subgroup $L$ of $G$ is either reductive or of the form $L= QV $ where $V$ is the unipotent radical of $L$ and $Q$ is a compact subgroup. 
\end{lem}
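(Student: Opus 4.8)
The plan is to use the Levi decomposition of $L$ together with the structural constraints on subgroups of a rank-one group provided by Lemma~\ref{vpp}. Write $L = Q \ltimes R_u(L)$ where $R_u(L)$ is the unipotent radical and $Q$ is a Levi subgroup, i.e.\ a maximal connected reductive subgroup, which may be chosen so that $L = Q \ltimes R_u(L)$. If $R_u(L) = \{e\}$ then $L = Q$ is reductive and we are done, so assume $R_u(L) \neq \{e\}$. The goal is then to show that $R_u(L)$ must be the full unipotent radical of $L$ in the sense claimed (namely that $L = QV$ with $V = R_u(L)$ and $Q$ compact); in fact, since $L = Q \ltimes R_u(L)$ already, what remains is to show $Q$ is compact.

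First I would locate $R_u(L)$ inside a conjugate of $N$. Since $R_u(L)$ is a nontrivial connected unipotent subgroup of $G$ and $G$ has rank one, every unipotent element of $G$ is conjugate into $N$ (as $N$ is a maximal unipotent subgroup and all such are conjugate); moreover a connected unipotent subgroup, being contained in a Borel=minimal parabolic, lies in a conjugate of $N$. After conjugating we may assume $R_u(L) < N$. Now $L$ normalizes $R_u(L)$, so by Lemma~\ref{vpp}(2) we get $L < \op{N}_G(R_u(L)) < P = MAN$. This is the key reduction: $L$ sits inside the minimal parabolic $P$.

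Next I would exploit unimodularity of $L$ against the non-unimodularity of $P$. Since $L < P = MAN$ and $L \supset R_u(L)$ which is a nontrivial subgroup of $N$, consider the projection of $L$ to $P/N \cong MA$. If this projection has nontrivial image in the $A$-factor, then $L$ contains (up to the unipotent part) an element acting on $R_u(L) \subset N$ by a nontrivial scaling of the root spaces via $\op{Ad}$; since $a_t$ acts on every nonzero vector of $\Lie(N)$ with eigenvalue bounded away from $1$ for $t \neq 0$, the modular character of $L$ restricted to such a one-parameter subgroup is nontrivial (the adjoint action on $\Lie(R_u(L))$ contracts or expands volume), contradicting unimodularity of $L$. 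Hence the image of $L$ in the $A$-factor of $MA$ is trivial, so $L < MN$. The Levi subgroup $Q < L < MN$ is reductive, and its projection to $N$ is trivial (a reductive subgroup cannot surject onto a nontrivial unipotent group, or: $Q \cap N$ is normalized hence, being reductive and unipotent, trivial, and $Q \to MN/N = M$ has unipotent-free reductive image), so $Q$ is conjugate into $M$, which is compact. Therefore $Q$ is compact, and setting $V := R_u(L)$ we obtain $L = QV$ with $V$ the unipotent radical and $Q$ compact.

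The main obstacle I anticipate is the bookkeeping in the unimodularity step: one must carefully compute the modular function $\Delta_L(\ell) = |\det \op{Ad}(\ell)|_{\Lie(L)}|$ and show that a nonzero component in $A$ forces $\Delta_L \not\equiv 1$. This requires knowing that the $A$-action on $\Lie(L) = \Lie(Q) \oplus \Lie(R_u(L))$ contributes a net nonzero weight — which follows because $A$ acts trivially (weight $0$) on the part of $\Lie(L)$ inside $\Lie(M)$ but with strictly positive weights on the part inside $\Lie(N)$, and $\Lie(R_u(L)) \subset \Lie(N)$ is nonzero; the reductive part $\Lie(Q)$, if it had a piece with negative $A$-weight, would put $Q$ partly in $N^+$, but a connected reductive subgroup normalized into $P$ lies in $MA$ after adjustment, so no cancellation occurs. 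Handling this cleanly — perhaps by first arguing $L < MAN$ with $Q < MA$ via Lemma~\ref{lem.e1}-type $\Theta$-invariance or via a direct Levi-in-parabolic argument — is where care is needed; everything else is a routine application of Lemma~\ref{vpp} and the rank-one root structure.
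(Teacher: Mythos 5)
Your proposal is correct and takes essentially the same approach as the paper: Levi decomposition $L=QV$, Lemma \ref{vpp} to place the relevant group inside a minimal parabolic, and unimodularity via the modular character (the one-signed $A$-weights on $\operatorname{Lie}(N)$) to eliminate the split part, leaving a compact Levi factor. The only difference is the order of steps — the paper first kills $Q^{\nc}$ using that a rank-one parabolic contains no noncompact simple subgroup and then rules out a split torus by unimodularity, while you first kill the $A$-component of $L$ and then conjugate $Q$ into $M$ — which is a cosmetic variation of the same argument.
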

\begin{proof}
 Suppose $L$ is not reductive, i.e., its unipotent radical $V$ is not trivial.
We have a Levi decomposition $L=QV$
where $Q$ is a reductive subgroup. Since $Q$ normalizes $V$, it is
contained in a parabolic subgroup by Lemma \ref{vpp}.
This implies  $Q^{\nc}$ should be trivial, since a parabolic subgroup in rank one cannot contain a noncompact simple Lie subgroup. Hence $Q$ is an almost direct product of a torus and compact simple Lie group. Since $L$ is unimodular, $Q$ cannot have an $\br$-split torus, as its conjugation action on $V$ would yield a non-trivial modular character of $L$. Therefore $Q$ must be compact. 
\end{proof}

\section{Totally geodesic submanifolds}
In this section, we characterize the totally geodesic subspaces of the symmetric space $X=G/K$ and describe their stabilizers in $G$. These subspaces correspond precisely to the orbits of subgroups belonging to the families $\mathscr H$ and $\mathscr H^*$
 introduced in Section \ref{sec.b}. We also explain how they descend to immersed totally geodesic submanifolds in the quotient manifold $\cM=\Ga\ba X$. Recall the basepoint $o=[K]\in X$.

A totally geodesic subspace of $X$ of dimension one is a complete geodesic, and it is of the form
$gA.o$ for some $g\in G$. Those of dimension at least two can be characterized as follows:
\begin{lem}\label{lem.PE0} For a connected smooth submanifold $Y$ of $X$ of dimension at least two, the following are equivalent to each other:
\begin{enumerate}
    \item $Y$
is totally geodesic;
\item $Y=gH.o$ for some $g\in G$ and $H\in\mathscr H$;
\item  $Y=gH.o$ for some $g\in G$ and $H\in\mathscr H^*$. 
\end{enumerate}
\end{lem}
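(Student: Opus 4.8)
The plan is to prove the chain of equivalences $(1)\Leftrightarrow(3)$ and $(3)\Leftrightarrow(2)$, using the structure theory developed in Section~\ref{sec.b}. The implications $(2)\Rightarrow(1)$ and $(3)\Rightarrow(1)$ are the easy direction: if $H\in\mathscr H^*$ (or $H\in\mathscr H$), then $H$ is $\Theta$-invariant, so by the uniqueness of the Cartan decomposition $H=(K\cap H)\exp(\frak p\cap\frak h)$, and $H.o=\exp(\frak p\cap\frak h).o$. A linear subspace $\frak q=\frak p\cap\frak h\subset\frak p$ is a \emph{Lie triple system} precisely because $\frak h$ is a subalgebra (so $[[\frak q,\frak q],\frak q]\subset\frak h\cap\frak p=\frak q$), and by the classical correspondence (É. Cartan) between Lie triple systems in $\frak p$ and totally geodesic subspaces through $o$, the set $\exp(\frak q).o$ is totally geodesic; translating by $g$ preserves total geodesy since $G$ acts by isometries. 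Then $gH.o=gH^{\op{nc}}.o$ by Lemma~\ref{lem.h1}, giving $(2)\Rightarrow(1)$ from $(3)\Rightarrow(1)$.

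For the substantive direction $(1)\Rightarrow(3)$, I would start with a totally geodesic submanifold $Y$ of dimension $\ge 2$. Since $G$ acts transitively on $X$ and by isometries, I may translate so that $o\in Y$; then the tangent space $T_oY$ corresponds to a subspace $\frak q\subset\frak p$, and the total geodesy of $Y$ forces $\frak q$ to be a Lie triple system with $Y=\exp(\frak q).o$. Let $\frak h:=\frak q\oplus[\frak q,\frak q]$; the Lie triple system axioms guarantee this is a subalgebra of $\frak g$, manifestly $\theta$-invariant (since $\frak q\subset\frak p$ forces $[\frak q,\frak q]\subset\frak k$), with $\frak h\cap\frak p=\frak q$. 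Let $H_0:=\langle\exp\frak h\rangle$ be the corresponding connected subgroup; then $H_0.o=\exp(\frak q).o=Y$. Since $\dim Y\ge 2$, the subspace $\frak q$ is not contained in any line of $\frak p$, so $\frak h$ is non-compact (it contains a Cartan subspace of $\frak p$ up to conjugacy — more precisely, after conjugating $H_0$ by an element of $K$ I may assume $A\subset H_0$, using that any maximal abelian subspace of $\frak q$ extends to one of $\frak p$ and all such are $K$-conjugate).

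The remaining work is to pass from $H_0$ to a member of $\mathscr H^*$ with the same orbit through $o$. The group $H_0$ need not be simple, but it is reductive (being generated by a $\theta$-invariant subalgebra, it is self-adjoint, hence reductive). So I apply Lemma~\ref{lem.h1} with $L=H_0$: provided $L^{\op{nc}}\ne\{e\}$, setting $H:=\op{N}_G(L^{\op{nc}})^\circ$ gives $H\in\mathscr H$, $H^{\op{nc}}=L^{\op{nc}}$, and $H.o=L.o=Y$; and then $H^{\op{nc}}\in\mathscr H^*$ with $H^{\op{nc}}.o=H.o=Y$ again by Lemma~\ref{lem.h1}, which yields both $(1)\Rightarrow(2)$ (via $gH.o$) and $(1)\Rightarrow(3)$ (via $gH^{\op{nc}}.o$) after translating back by the original $g\in G$. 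I expect the main obstacle to be verifying that $L^{\op{nc}}\ne\{e\}$, i.e.\ that $H_0$ genuinely has a non-compact simple factor rather than being (compact)$\times$(split torus): this is where $\dim Y\ge 2$ is essential, and the argument is that $\frak h\cap\frak p=\frak q$ is at least $2$-dimensional, which cannot happen if $\frak h^{\op{nc}}=\{e\}$ since then $\frak h$ would be contained in $\frak k\oplus\fa$ for a single split line $\fa$, forcing $\dim(\frak h\cap\frak p)\le 1$. Once this dichotomy is pinned down, the equivalence $(2)\Leftrightarrow(3)$ is immediate from Lemma~\ref{lem.h1}, since that lemma exhibits $H\mapsto H^{\op{nc}}$ and $H\mapsto\op{N}_G(H)^\circ$ as mutually inverse bijections $\mathscr H\leftrightarrow\mathscr H^*$ preserving the orbit $H.o$.
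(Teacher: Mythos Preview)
Your proposal is correct and follows essentially the same route as the paper: both reduce to the Cartan correspondence between Lie triple systems in $\frak p$ and totally geodesic subspaces through $o$, build the $\theta$-invariant reductive subalgebra $\frak q\oplus[\frak q,\frak q]$, conjugate by $K$ to arrange $A\subset L$, use rank one together with $\dim Y\ge 2$ to rule out $L^{\op{nc}}=\{e\}$, and then invoke Lemma~\ref{lem.h1} to land in $\mathscr H$ and $\mathscr H^*$. Your argument for $L^{\op{nc}}\ne\{e\}$ (that otherwise $\frak h\cap\frak p$ lies in a single split line) is a minor rephrasing of the paper's version (that otherwise $[L,L]\subset K$ forces $Y=\op{C}_L(L).o$ to be at most one-dimensional), but the content is the same.
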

\begin{proof} That a subspace $Y$ as in (2) and (3) is totally geodesic follows from \cite{He} (see also \cite[2.6]{Eb}). Now suppose that $Y$ is a totally geodesic subspace of $X$ of dimension at least two.
By translating by an element of $G$, we may assume that
$Y$ passes through $o\in X$.
Then there exists a subspace $\frak p^*$ of $\fp$ such that $Y=\exp \fp^* .o$ and  $[[\fp^*, \fp^*], \fp^*]\subset \fp^*$ by \cite[Proposition 2.6.1]{Eb}.
Moreover,  $\fk^*:=[\fp^*, \fp^*]$ is a subalgebra of $\fk$ and
$$\fg^*:=\fk^*+\fp^*$$ is a $\theta$-invariant reductive subalgebra of $\fg$.
Let $L$ denote the connected reductive subgroup of $G$ with Lie algebra $\fg^*$.
Then $L$ is $\Theta$-invariant and $Y=L.o\simeq L/L\cap K$.
Since $G$ has rank one and $L=\op{C}_L(L) [L,L]$, we must have $L^{\nc}\ne \{e\}$; otherwise $[L,L]\subset K$ and hence $Y=\op{C}_L(L).o$ would be at most one-dimensional, contradicting the hypothesis on $Y$. 
Moreover, because $L$ is $\Theta$-invariant, there exists $k\in K$ such that $Q:=k^{-1} Lk$ contains $A$.
Then  $$Y=k Q.o\quad\text{and }\quad H:=\op{N}_G({Q}^{\op{nc}})^\circ\in \mathscr H.$$
By Lemma \ref{lem.h1}, we have $Q.o=H.o =H^{\nc}.o $. This completes the proof.
\end{proof}

\begin{lem}\label{gy} \begin{enumerate}
    \item If $Y=gH.o$ for some $H\in \mathscr H \cup\{A\}$, then $G_Y^\circ =g\op{N}_G(H)^\circ g^{-1}$. In particular, if $H\in \mathscr H$, then $G_Y^\circ= g H g^{-1}$. \item  If $G_{Y_1}^\circ =G_{Y_2}^\circ $ for totally geodesic subspaces $Y_1, Y_2$ of $X$, then $Y_1=Y_2$.
\end{enumerate}
\end{lem}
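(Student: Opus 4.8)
\textbf{Proof proposal for Lemma \ref{gy}.}

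The plan is to prove the two parts separately, with part (1) being essentially a normalizer computation and part (2) a rigidity statement that I expect to be the real content. For part (1), since $G_Y = g G_{H.o} g^{-1}$ when $Y = gH.o$, it suffices to treat the case $Y = H.o$ and show $G_Y^\circ = \op{N}_G(H)^\circ$ for $H \in \mathscr{H} \cup \{A\}$ (when $H \in \mathscr{H}$ we moreover have $\op{N}_G(H)^\circ = H$ by Lemma \ref{lem.h1}, which gives the ``in particular'' clause). One inclusion is immediate: any $n \in \op{N}_G(H)$ satisfies $nH.o = Hn.o$, but I need to know $n.o \in H.o$ to conclude $nH.o = H.o$; for $n \in \op{N}_G(H)^\circ$ this follows because $\op{N}_G(H)^\circ$ is $\Theta$-invariant (both $H$ and its normalizer are), so $\op{N}_G(H)^\circ = (K \cap \op{N}_G(H)^\circ)\exp(\fp \cap \op{Lie}(\op{N}_G(H)))$, and by Lemma \ref{ngh} combined with $\op{N}_G(H)\cap\exp\fp \subset H$ we get $\op{Lie}(\op{N}_G(H)) \cap \fp = \fh \cap \fp$, whence $\op{N}_G(H)^\circ.o = H.o$. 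For the reverse inclusion, suppose $g \in G_Y^\circ$, i.e.\ $gH.o = H.o$. Then $g$ preserves the totally geodesic subspace $Y = H.o$, and $gHg^{-1}$ is another connected subgroup with $(gHg^{-1}).o = Y = H.o$; using $\Theta$-invariance of $H.o$ and Lemma \ref{inv} applied to $gHg^{-1}$ and $H$ (both $\Theta$-invariant — here I'd use Moshtagh–Mostow as in Lemma \ref{lem.e1}, or note $g$ can be adjusted by $K$ on the right), I reduce to $g \in K \cdot \op{N}_G(H)$; combined with the stabilizer in $K$ of the subspace lying in $\op{N}_G(H)$ as well, this forces $g \in \op{N}_G(H)$, hence $G_Y^\circ \subset \op{N}_G(H)$, and passing to identity components finishes (1). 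The case $H = A$ is handled the same way, using that $\op{N}_G(A)^\circ = MA$ modulo the component issue.

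For part (2), the key point is to recover $Y$ from $G_Y^\circ$. By part (1) and Lemma \ref{lem.PE0}, each $Y_j$ has $G_{Y_j}^\circ = g_j H_j g_j^{-1}$ for some $g_j \in G$ and $H_j \in \mathscr{H} \cup \{A\}$, and $Y_j = g_j H_j.o = g_j H_j^{\nc}.o$ (or $g_j A.o$). Now $G_{Y_1}^\circ = G_{Y_2}^\circ =: L$ is a single group; its orbit $L.o$ is a well-defined totally geodesic subspace. I claim $Y_j = L.o$ for $j=1,2$, which immediately gives $Y_1 = Y_2$. Indeed $L = g_j H_j g_j^{-1}$, so $L.o = g_j H_j g_j^{-1}.o$, and I need $g_j^{-1}.o \in H_j.o$; but this is exactly the statement that $g_j$ can be chosen so that $g_j H_j g_j^{-1}$ is $\Theta$-invariant (which it is, being $G_{Y_j}^\circ$ for the $\Theta$-invariant subspace — after translating $Y_j$ through $o$, which I may do) and then $g_j \in K \cdot \op{N}_G(H_j)$ by Lemma \ref{inv}, so modulo $\op{N}_G(H_j)$ we have $g_j \in K$; replacing $g_j$ by its $K$-part $k_j$ gives $Y_j = k_j H_j.o$ with $k_j \in K$, hence $Y_j = k_j H_j k_j^{-1}.o = L.o$. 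Thus both equal $L.o$.

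The main obstacle I anticipate is the bookkeeping around identity components and the passage $\op{N}_G(H) \rightsquigarrow \op{N}_G(H)^\circ$: a priori an element of $G_Y$ could lie in a non-identity component of $\op{N}_G(H)$, and conversely I must be careful that the $K$-adjustment from Lemma \ref{inv} interacts correctly with ``$^\circ$''. The cleanest route is to argue entirely at the level of Lie algebras — show $\op{Lie}(G_Y) = \op{Lie}(\op{N}_G(H)) = \fh \oplus \op{Lie}(\op{C}_G(H))$ using that $\op{Lie}(G_Y)$ consists of Killing fields tangent to $Y$ along $Y$, which by total geodesy and the structure theory of Lemma \ref{lem.PE0} must normalize the isometry algebra of $Y$, i.e.\ normalize $\fh^{\nc}$ — and then take identity components at the end. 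With that reformulation part (1) becomes a statement about $\fg$ and part (2) follows formally, so I expect no further essential difficulty beyond assembling Lemmas \ref{inv}, \ref{ngh}, \ref{lem.h1} correctly.
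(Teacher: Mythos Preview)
Your overall strategy is sound, and part (2) essentially matches the paper once cleaned up. But part (1) has a genuine gap in the reverse inclusion $G_Y \subset \op{N}_G(H)$.

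After reducing to $Y=H.o$ and writing an arbitrary $g'\in G_Y$ as $g'=hk$ with $h\in H$, $k\in K$ (using $g'.o\in H.o$), the task is to show $k\in\op{N}_K(H)$. At this point Lemma~\ref{inv} gives nothing: applied to $k\in K$ with $kHk^{-1}$ $\Theta$-invariant, it only returns $k=k'h'$ with $k'\in K$, $h'\in H\cap K$, which is vacuous. The missing step (the one the paper uses) is geometric: since $k$ fixes $o$ and stabilizes $Y$, its differential preserves the tangent space $T_oY\cong\frak p^*:=\frak h\cap\frak p$, so $\op{Ad}(k)\frak p^*=\frak p^*$. The Lie triple identity $[\frak p^*,\frak p^*]\subset\frak k$ then forces $\op{Ad}(k)$ to preserve the subalgebra generated by $\frak p^*$, namely $\op{Lie}(H^{\nc})$; hence $k\in\op{N}_K(H^{\nc})\subset\op{N}_K(H)$. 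Your closing ``Lie algebra'' paragraph gestures in this direction, but the specific bracket relation is the crux and you do not invoke it.

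For part (2), your claim $Y_j=L.o$ fails in general: with $L=g_jH_jg_j^{-1}$ one has $L.o=g_jH_j.(g_j^{-1}.o)$, which equals $Y_j=g_jH_j.o$ only when $g_j^{-1}.o\in H_j.o$, i.e.\ $g_j\in KH_j$. You cannot translate both $Y_1,Y_2$ through $o$ simultaneously without already knowing $Y_1\cap Y_2\neq\emptyset$. The fix is exactly the paper's argument (and what your appeal to Lemma~\ref{inv} is really driving at): from part (1) one has $g_1H_1g_1^{-1}=g_2H_2g_2^{-1}$, so $(g_2^{-1}g_1)H_1(g_2^{-1}g_1)^{-1}=H_2$ is $\Theta$-invariant; Lemmas~\ref{inv} and~\ref{ngh} then give $g_2^{-1}g_1=kh$ with $k\in K$, $h\in H_1$, and $kH_1k^{-1}=H_2$, whence $Y_1=g_2khH_1.o=g_2kH_1k^{-1}.o=g_2H_2.o=Y_2$ directly. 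The $L.o$ detour is unnecessary.
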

\begin{proof}
Without loss of generality, we may assume $g=e$. Since $H$ is $\Theta$-invariant, we have the decomposition $\frak h=\frak k^* \oplus \frak p^*$ where
$\frak k^*=\frak h\cap \frak k$ and $\frak p^*= \frak h\cap\frak p$.
Suppose $g'Y= Y $ for some $g'\in G$. Then $g' o\in H.o$ and hence $g'= hk$ for some $h\in H$ and $k\in K$.
So $g' H g'^{-1}= h (kHk^{-1})h^{-1}$. Since
$k H.o= h^{-1} g' H.o =H.o$, we have $kHk^{-1} .o= H.o$. Since $k.o=o$, this implies
 $\op{Ad}(k)\frak p^*=\frak p^* $. Since $\frak k^*=[\frak p^*, \frak p^*]$,
 it follows that $\op{Ad}(k)\frak h=\frak h$, and hence $k\in \op{N}_K(H)$.
Thus $$G_Y= H \op{N}_K(H) .$$ 
Hence $G_Y^\circ \subset \op{N}_G(H)^\circ$.
Since $\op{N}_G(H)$ is commensurable with $H \op{C}_G(H)$ and $\op{C}_G(H)=\op{C}_K(H)$ by the $\Theta$-invariance of $H$ (see the proof of Lemma \ref{lem.h1}), 
we have $G_Y^\circ\supset \op{N}_G(H)^\circ $, proving (1).

To prove (2), suppose that $G_{Y_1}^\circ =G_{Y_2}^\circ $.
Write $Y_i=g_iH_i.o$ for some $H_i\in \mathscr H\cup\{A\}$ and $g_i\in G$. By (1),
we have $g_1H_1 g_1^{-1}= g_2 H_2 g_2^{-1}$. 
If $H_1=A$, then $H_2=A$, and $g_2^{-1}g_1\in \op{N}_G(A)=A \op{N}_K(A)$. Hence $g_1A.o=g_2A.o$. Now suppose $H_1, H_2\in \mathscr H$.
By Lemmas \ref{inv} and \ref{ngh}, we have $g_1=g_2kh$ for some $k\in K$ and $h\in H_1$, implying $kH_1k^{-1}=H_2$.
Hence $Y_1= g_2 k H_1 k^{-1}. o= g_2 H_2.o=Y_2$.
\end{proof}

A totally geodesic subspace $Y\subset X$ of dimension at least two equipped with the metric induced from $(X,\sfd)$, is itself a rank-one symmetric space of non-compact type.
Moreover, its boundary at infinity $\partial_\infty Y$ can be naturally identified with a subset of $\partial_\infty X$.
Indeed, if $Y=gH.o$ for $H\in \mathscr H$, then, since $A\subset H$,
we have $$\partial_\infty Y= gHP/P\subset \partial_\infty X=G/P .$$
\begin{lem}\label{yb}
   We have $\hull (\partial_\infty Y)=Y$. 
\end{lem}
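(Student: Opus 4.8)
The plan is to prove the two inclusions $\hull(\partial_\infty Y)\subseteq Y$ and $Y\subseteq\hull(\partial_\infty Y)$ separately, reducing to the case $Y=H.o$ for $H\in\mathscr H$ by applying an element of $G$ (both $\hull$ and $\partial_\infty$ are $G$-equivariant).

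For the first inclusion, note that $Y=H.o$ is a totally geodesic subspace of $X$, hence convex: any geodesic segment between two points of $Y$ lies in $Y$, and the same is true for complete geodesics with both endpoints in $\partial_\infty Y$, since $\partial_\infty Y=HP/P$ consists exactly of the classes of rays $\{ha_to:t\ge0\}$ with $h\in H$, and $Y$ contains all such rays as well as the geodesics $hA.o$. Thus $Y$ is a closed convex subset of $X$ containing $\partial_\infty Y$ in its boundary, so it contains the convex hull $\hull(\partial_\infty Y)$ (which by definition is the smallest closed convex subset of $X$ whose boundary at infinity contains $\partial_\infty Y$, equivalently the closed convex hull of the union of all complete geodesics with both endpoints in $\partial_\infty Y$).

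For the reverse inclusion $Y\subseteq\hull(\partial_\infty Y)$, I would use the fact that $Y=H.o$ is itself a rank-one symmetric space of non-compact type (as remarked just before the lemma), so it is a Hadamard manifold on which its own isometry group — in particular $H$, or rather $H^{\op{nc}}$, which acts transitively on $Y$ by Lemma~\ref{lem.h1} — acts. Every point of $Y$ lies on a complete geodesic inside $Y$, and the two endpoints of that geodesic are points of $\partial_\infty Y$; indeed through $o$ there is the geodesic $A.o\subset Y$ with endpoints $e^\pm\in\partial_\infty Y$ corresponding to $P/P$ and $wP/P$, and translating by $H^{\op{nc}}$ (transitive on $Y$) we see every point of $Y$ lies on such a geodesic. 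Hence $Y$ is contained in the union of complete geodesics with endpoints in $\partial_\infty Y$, which is contained in $\hull(\partial_\infty Y)$.

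The main obstacle is making precise the definition of $\hull(\partial_\infty Y)$ and checking the convexity claims cleanly: one must verify that a totally geodesic subspace $Y$ is genuinely convex in $X$ (so that it contains the convex hull of any subset of its ideal boundary) and that, conversely, through every point of $Y$ there passes a complete geodesic with both ideal endpoints in $\partial_\infty Y$. Both facts follow from the homogeneity of $Y$ under $H^{\op{nc}}$ together with the identification $\partial_\infty Y=gHP/P$ already established, plus the elementary fact that in a rank-one symmetric space the endpoints of a complete geodesic determine it; I do not expect genuine difficulty here, only the need to invoke the standard Hadamard-space description of the convex hull of a boundary set and the structure results of Section~\ref{sec.b}.
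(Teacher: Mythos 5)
Your proposal is correct and takes essentially the same route as the paper: the paper also proves $\hull(\partial_\infty Y)\subset Y$ by noting that $Y$, being itself a rank-one symmetric space, contains a geodesic joining any two points of $\partial_\infty Y$, so by uniqueness of the geodesic in $X$ with given distinct endpoints that geodesic lies in $Y$, and it proves $Y\subset\hull(\partial_\infty Y)$ exactly as you do, by passing a complete geodesic of $Y$ through each point. The one place to tighten your write-up is the first inclusion: merely observing that $Y$ contains the rays $\{ha_t o\}$ and the geodesics $hA.o$ does not yet show that an \emph{arbitrary} geodesic of $X$ with both endpoints in $\partial_\infty Y$ lies in $Y$; the uniqueness fact you mention only at the end (endpoints determine the geodesic) is precisely the bridge and should be invoked there, as in the paper's proof.
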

\begin{proof} We use the fact that between any two distinct points of $\partial_\infty X$, there is a unique geodesic with those endpoints. Let $\ell\subset X$ be a geodesic with both end points
in $\partial_\infty Y$. Since $Y$ itself is a rank-one symmetric space, inside $Y$, there exists a unique geodesic $\ell_Y$ with the same points. By the uniqueness, $\ell=\ell_Y\subset Y$, proving
the inclusion $\hull (\partial_\infty Y)\subset Y$. Conversely,
for any $y\in Y$, there exists a complete geodesic through $y$ with end points in $\partial_\infty Y$. Hence $Y\subset \hull (\partial_\infty Y)$, and the equality follows.
\end{proof}

\subsection*{Properly immersed totally geodesic submanifolds}
Let $\Ga<G$ be a  discrete subgroup. Throughout the paper, we assume that all discrete subgroups are torsion-free.  Consider the associated locally symmetric space
$$
\cM=\Ga\ba X.
$$
Let  $\mathsf p: X\to \Ga\ba X$ denote the quotient map.
For a totally geodesic subspace $Y$ of $X$,
the restriction $\sfp|_Y: Y\to\Ga\ba X$ factors through the covering map $Y\to   (G_Y\cap \Ga)\ba Y$, and hence induces an immersion
$$
\iota:  (G_Y\cap \Ga) \ba Y \to \cal M.
$$
Its image $\cal N=\Ga\ba\Ga Y$ is a totally geodesic (immersed) submanifold of $\cM$.
If the map $\iota$ is proper, then $\cal N$ is a \textit{properly immersed totally geodesic submanifold} of $\cal M$.

We will need the following general result\footnote{The proof of the lemma appears in the arXiv version of \cite{OS1}, but was omitted from the published version; we reproduce it here for the reader's convenience.}.
\begin{Lem}\label{properr}
Let $G$ be a locally compact, second countable topological group and $H<G$ a closed subgroup. Let $\Ga <G$ a closed subgroup.
Then the  canonical projection $(H\cap \Ga) \ba H \to \Ga\ba G$ is proper if and only if
$\Ga H$ is closed in $G$.
\end{Lem}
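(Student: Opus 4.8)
The plan is to prove both implications directly from the definition of properness for continuous maps between locally compact second countable spaces, namely that preimages of compact sets are compact.

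First I would handle the direction ``$\Ga H$ closed $\implies$ the projection is proper.'' Write $\pi : (H\cap \Ga)\ba H \to \Ga\ba G$ for the canonical projection, $q_\Ga : G \to \Ga\ba G$ for the quotient map, and $q_H : H \to (H\cap\Ga)\ba H$ for the quotient by the left $(H\cap\Ga)$-action. The key structural observation is that $\pi$ is a homeomorphism onto its image: indeed $\pi$ is injective because $\Ga h_1 = \Ga h_2$ with $h_i \in H$ forces $h_2 h_1^{-1}\in \Ga\cap H$; it is continuous and open onto its image since both $q_\Ga$ and $q_H$ are open. Its image is $\Ga H / \Ga$ with the subspace topology, which is a \emph{closed} subset of $\Ga\ba G$ by hypothesis. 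Now let $C\subset \Ga\ba G$ be compact. Then $\pi^{-1}(C) = \pi^{-1}(C\cap (\Ga H/\Ga))$, and $C\cap(\Ga H/\Ga)$ is a compact subset of the closed subspace $\Ga H/\Ga$; since $\pi$ is a homeomorphism onto this closed subspace, $\pi^{-1}(C)$ is compact. Hence $\pi$ is proper. (One should remark that here closedness of $\Ga H/\Ga$ in $\Ga\ba G$ is what makes $C\cap(\Ga H/\Ga)$ compact rather than merely relatively compact — this is exactly where the hypothesis is used.)

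Next I would prove the converse, ``$\pi$ proper $\implies \Ga H$ closed.'' Equivalently I show $\Ga H/\Ga$ is closed in $\Ga\ba G$, since $q_\Ga^{-1}(\Ga H/\Ga) = \Ga H$ and $q_\Ga$ is continuous. A clean way: a proper continuous map between locally compact Hausdorff (more precisely, locally compact second countable, hence $\sigma$-compact and metrizable) spaces is a closed map. Since $\pi$ is proper, it is closed; in particular its image $\pi\big((H\cap\Ga)\ba H\big) = \Ga H/\Ga$, being the image of the closed set $(H\cap\Ga)\ba H$ under the closed map $\pi$, is closed in $\Ga\ba G$. Alternatively, to avoid invoking the ``proper $\Rightarrow$ closed'' theorem as a black box, I would argue sequentially: take $\Ga g_n \in \Ga H/\Ga$ with $\Ga g_n \to \Ga g$ in $\Ga\ba G$; the set $C = \{\Ga g_n\}\cup\{\Ga g\}$ is compact, so $\pi^{-1}(C)$ is compact, and choosing $x_n \in \pi^{-1}(\Ga g_n)$ we may pass to a convergent subsequence $x_{n_k}\to x$ in $(H\cap\Ga)\ba H$; continuity of $\pi$ gives $\Ga g = \pi(x) \in \Ga H/\Ga$, so the image is (sequentially, hence topologically, by second countability) closed.

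I do not expect a serious obstacle here; the statement is essentially a packaging of standard facts about quotients of locally compact groups. The one point requiring a little care is the topological bookkeeping in the converse — making sure one is entitled to use sequences (metrizability / second countability) or, if one prefers the net-free route, correctly invoking that a proper map into a locally compact Hausdorff space is closed. I would state explicitly which standard fact is being used and keep the argument self-contained by giving the sequential version, since $G$ is assumed second countable.
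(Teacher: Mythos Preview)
Your converse direction is fine, but the forward direction contains a genuine gap. You assert that $\pi$ is open onto its image ``since both $q_\Ga$ and $q_H$ are open,'' and then state in your parenthetical remark that closedness of $\Ga H$ is used only to make $C\cap(\Ga\ba\Ga H)$ compact. This is incorrect: openness of $\pi$ onto its image is the nontrivial step, and it is precisely where the closedness hypothesis must enter. The openness of $q_\Ga:G\to\Ga\ba G$ says nothing about the restriction $q_\Ga|_H$ being open onto its image, because the inclusion $H\hookrightarrow G$ is not open. For a concrete failure of your argument as written, take $G=\br^2$, $\Ga=\z^2$, and $H=\br\cdot(1,\alpha)$ with $\alpha$ irrational: then $H\cap\Ga=\{0\}$ and $\pi$ is the injection of $\br$ onto the irrational line in the torus, which is continuous and bijective onto its image but certainly not a homeomorphism (a small neighborhood of $\pi(0)$ in the image pulls back to an unbounded subset of $\br$). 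Your justification would apply verbatim to this example, so it cannot be valid.

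The paper fills exactly this gap with a Baire category argument: since $\Ga H$ is closed, the orbit $Z=x_0H$ is a closed, hence locally compact and second countable, subspace of $\Ga\ba G$; writing $Z$ as a countable union of $H$-translates of a set with compact closure, Baire forces one translate to have nonempty interior in $Z$, from which openness of the orbit map $h\mapsto x_0h$ follows. This is the standard mechanism behind the theorem that a continuous transitive action of a $\sigma$-compact locally compact group on a locally compact Hausdorff space induces a homeomorphism from the coset space onto the space. You should either invoke that theorem explicitly (noting that its hypothesis of local compactness of $Z$ is where closedness of $\Ga H$ is used) or reproduce the Baire step; the sentence you wrote does not establish it.
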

\begin{proof} Suppose that $\Ga H$ is closed in $G$.
Let $x_0=[e]\in\Ga \backslash G$ and $Z=x_0 H$.
Then $Z$ is closed in $\Ga \backslash G$; in particular, $Z$ is locally compact and second countable and $H$ acts transitively on $Z$. We first claim, by Baire's category theorem, that the map
$h\mapsto x_0h$ from $H$ to $Z$ is open.
Indeed, let $\Omega$ be a neighborhood of $e$ in $H$ and fix $h\in H$.
We wish to show that $x_0h\Omega$ contains a neighborhood of $xh$.
Choose an open neighborhood $\Omega_1$ of $e$
with compact closure and the closure of ${\Omega_1 \Omega_1^{-1}}$ is contained in $ \Omega$.
There exists a sequence $\{g_i\}\subset H$ such that $H=h\cup (\cup_{i=1}^\infty\Omega_1)g_i$.
Hence $Z=\cup_{i=1}^\infty x_0h\cl{\Omega}_1$.
By Baire's category theorem,  one of the sets $x_0h\cl{\Omega}_1g_i$ has nonempty interior, say around $x_0h\omega g_i$ for some $\omega\in \cl{\Omega}_1$.
Thus $x_0h\cl{\Omega}_1g_i(g_i^{-1} \omega^{-1})$ contains a neighborhood of $x_0h$, proving that the map $h\mapsto x_0 h$ is open.
Therefore the map $$(H\cap \Ga) \backslash H \to Z ,\quad (H\cap \Ga) h\mapsto x_0 h$$ is continuous, open, and one to one, hence a homeomorphism.

Now let $C\subset \Ga \backslash G$ be compact. 
Then its inverse image in $(H\cap \Ga) \backslash H$ coincides with the inverse image of $C\cap Z$ in $(H\cap \Ga) \backslash H$, which is compact
by the above homeomorphism.
This proves that the projection map is proper.
The converse is straightforward.
\end{proof}

\begin{lem}  \label{OS}
     Let $\Ga<G$ be a discrete subgroup and let
    $\cal N=xH.o$ be a totally geodesic submanifold for some $x\in\Ga\ba G$ and $H\in\mathscr H$.
   Then the  following are equivalent:
   \begin{enumerate}
       \item  $\cal N$ is properly immersed in $\cal M$;
       \item $\cal N$ is closed in $\cal M$;
\item  $xH$ is closed in $\Ga\ba G$.   \end{enumerate}
\end{lem}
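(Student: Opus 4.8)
The plan is to prove the chain of equivalences (1) $\Leftrightarrow$ (2) $\Leftrightarrow$ (3), reducing the geometric statements about $\cN$ to the group-theoretic statement about $xH$ and then invoking Lemma \ref{properr}. First I would set up notation: lift $x$ to a coset $\Ga g\in \Ga\ba G$, and identify $\cN = \Ga\ba \Ga gH.o$ inside $\cM=\Ga\ba G/K$. Since $H\in\mathscr H$ contains $A$ (hence the geodesic flow direction) and is $\Theta$-invariant, we have $H.o = H^{\nc}.o$ (Lemma \ref{lem.h1}) and the natural immersion $\iota\colon (G_{gH.o}\cap\Ga)\ba gH.o\to \cM$ is exactly the map discussed just before the lemma, with $G_{gH.o}^\circ = gHg^{-1}$ by Lemma \ref{gy}(1).

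The equivalence (1) $\Leftrightarrow$ (3) is the heart of the matter, and it should follow from Lemma \ref{properr} applied to the closed subgroup $H<G$ and the discrete (hence closed) subgroup $\Ga<G$: that lemma says the projection $(H\cap\Ga)\ba H\to \Ga\ba G$ is proper iff $\Ga H$ is closed in $G$, i.e. iff $xH$ is closed in $\Ga\ba G$ (where $x=[g]$ and after translating by $g$ one replaces $H$ by $g^{-1}\Ga g\cap H$ — or more cleanly, applies the lemma with base point $x$). The remaining work is to pass between properness of the orbit map into $\Ga\ba G$ and properness of the immersion $\iota$ into $\cM=\Ga\ba G/K$. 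Here one uses that $K$ is compact: the map $\Ga\ba G\to \Ga\ba G/K$ is proper, $H$ acts on $gH.o$ with compact stabilizers conjugate into $K$ (namely $H\cap gKg^{-1}$ composed appropriately), and $G_Y^\circ = gHg^{-1}$ has finite index in $G_Y = H\op{N}_K(H)$-conjugate, so $(G_Y\cap\Ga)\ba Y$ and $(H\cap g^{-1}\Ga g)\ba H$ differ only by compact groups and finite-index considerations. Thus properness of $\iota$ is equivalent to properness of the orbit map $xH\to \Ga\ba G$, which by Lemma \ref{properr} is equivalent to (3).

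For (2) $\Leftrightarrow$ (3): if $xH$ is closed in $\Ga\ba G$, then its image $\cN$ under the proper map $\Ga\ba G\to \cM$ is closed (a proper map is closed, and one must check $xHK/K$ is exactly $\cN$, which holds since $H.o=HK/K$). Conversely, if $\cN$ is closed in $\cM$, then its preimage $xHK$ under $\Ga\ba G\to \Ga\ba G/K$ is closed in $\Ga\ba G$; since $HK$ is a closed set and we want $xH$ closed, I would argue: $xH$ is the preimage of $\{o\}$-type slice, or more directly use that $H = (HK)\cap (\text{a } \Theta\text{-section})$ — concretely, using the Cartan decomposition $H=(K\cap H)\exp(\fp\cap\fh)$ and the fact that $\exp(\fp\cap\fh).o$ parametrizes $H.o$ homeomorphically, one recovers $xH$ from $xHK$ and closedness transfers. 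The (1) $\Rightarrow$ (2) direction is immediate since a properly immersed submanifold is closed, so really only (2) $\Rightarrow$ (3) and (3) $\Rightarrow$ (1) need the above.

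The main obstacle I anticipate is the bookkeeping in passing between $\Ga\ba G$ and $\cM=\Ga\ba G/K$: one must be careful that the discrepancy between $G_Y\cap\Ga$ (which involves the full stabilizer $H\op{N}_K(H)$, possibly with extra components) and $g^{-1}\Ga g\cap H$ is only by a compact group and does not affect properness or closedness, and that the orbit $xH$ inside $\Ga\ba G$ really does project onto $\cN$ and pull back correctly. None of this is deep — it is all compactness of $K$ and the Cartan decomposition of $H$ — but it requires care to state cleanly. Once that dictionary is in place, Lemma \ref{properr} does the real work and the equivalence is formal.
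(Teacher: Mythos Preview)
Your scheme for $(3)\Rightarrow(1)$ via Lemma~\ref{properr} and the properness of $\Ga\backslash G\to\cM$ is correct and matches the paper, and $(1)\Rightarrow(2)$ is indeed immediate. The gap is in $(2)\Rightarrow(3)$. You correctly observe that $\cN$ closed implies $xHK=\pi^{-1}(\cN)$ is closed in $\Ga\backslash G$, but the step ``one recovers $xH$ from $xHK$ and closedness transfers'' is not justified. The Cartan decomposition of $H$ tells you $HK=\exp(\fp\cap\fh)K$ inside $G=\exp(\fp)K$, but left multiplication by $\Ga$ does not respect this product structure, so there is no evident closed set $C\subset\Ga\backslash G$ with $xH=xHK\cap C$. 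Concretely: if $\ga_i g h_i\to g_0$, you get $g_0=\ga ghk$ for some $k\in K$, but to conclude $k\in(g^{-1}\Ga g)H$ you would need to control the sequence $\ga^{-1}\ga_i$, which may be unbounded even though the product $\ga^{-1}\ga_i\cdot gh_ih^{-1}g^{-1}$ converges.

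The paper's argument for $(2)\Rightarrow(3)$ supplies the missing idea: instead of working in $G/K$, pass to the space $\mathcal S\simeq G/G_Y$ of $G$-translates of $Y=H.o$. If $\ga_iY\to gY$ in $\mathcal S$, then $gY\subset\overline{\Ga Y}=\Ga Y$ as a subset of $X$, so $gY\subset\bigcup_{\ga}\ga Y$. Now a Baire category argument---using that $gY\cap\ga Y$ is nowhere dense in $gY$ unless $gY=\ga Y$---forces $gY=\ga Y$ for some $\ga$. Hence $\Ga Y$ is closed in $\mathcal S$, i.e.\ $\Ga G_Y$ is closed in $G$, and since $[G_Y:H]<\infty$ by Lemma~\ref{gy}, $\Ga H$ is closed too. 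This Baire step is the substantive content your sketch is missing.
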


\begin{proof} We may assume without loss of generality that $x=[e]$, so that $\cal N=\Ga\ba \Ga Y$ where $Y=H.o$.  We first show that $(3) \Rightarrow (1)$.
By hypothesis, the image of $ (\Ga\cap H) \ba H\to \Ga\ba G$ is closed.
Hence by Lemma \ref{properr}, the inclusion map
$j: (\Ga\cap H)\ba H\to \Ga \ba G$ is proper.
Since $p: \Ga\ba G\to \cM=\Ga\ba G .o$, $x\mapsto x.o$ is a proper map, 
the composition $p\circ j: (\Ga\cap H)\ba H\to \cM$ is proper. Since $p\circ j$ factors through  the map $j': (\Ga\cap H)\ba Y\to \cM$ induced by the inclusion $Y\subset X$,
$j'$ is a proper map.   Since $j'$ factors through the map $\iota: (\Ga\cap G_Y)\ba Y\to \cM $, this implies that $\iota$ is proper; $\cN$ is properly immersed, as desired.
The implication  $(1)\Rightarrow (2)$ is immediate.

We now show $(2)\Rightarrow (3)$. Since $[G_Y:H]<\infty$ by
Lemma \ref{gy}, it suffices to show that $\Ga G_Y$ is closed in $ G$.
Consider the subspace $\cal S=\{gY:g\in G\}=G Y$ with the topology given by the identification $\cal S\simeq G/G_Y$. Suppose that $\cal N$ is closed in $\cM$, that is, $\Ga Y$ is closed in $X$. This implies $\Ga Y$ is closed in $\cal S$; if $\ga_i Y\to gY$, then,
since $\Ga Y$ is closed in $X$, we have $gY\subset \Ga Y=\cup_{\ga\in \Ga} \ga Y$ as a subset of $X$. This implies that  $gY=\ga Y$ for some $\ga\in\Ga$; otherwise, $gY\cap \ga Y$ is a nowhere dense subset of $gY$ for all $\ga \in \Ga$, contradicting the Baire Category theorem.

Now to show that $\Ga G_Y$ is closed in $G$, suppose that $\ga_i h_i $ converges to $ g\in G$ for some sequences $\ga_i\in \Ga$ and $h_i\in G_Y$. This implies that $\ga_i Y$ converges to $gY$ in $\cal S$. By the closedness of $\Ga Y$ in $\cal S$, we then have $gY=\ga Y$ for some $\ga \in \Ga$. Therefore $g=\ga h$ for some $h\in G_Y$.  This implies that $\Ga G_Y$ is closed in $G$, completing the proof.
\end{proof}

In Lemma \ref{OS}, it is important that $H$ is chosen from the collection $\mathscr H$ so that $H$ is equal to the connected component of its normalizer, since $xH^{\nc}$ may not be closed even if $xH^{nc}.o=xH.o$.

\section{Totally geodesic submanifolds contained in the convex core}\label{sec.gf}
Let $\Ga<G$ be a torsion-free discrete subgroup, and consider the associated locally symmetric space $\cal M=\Ga\ba X$.  
In this section, we focus on totally geodesic submanifolds contained in the convex core of $\cM$.

We begin by recalling several basic definitions.
The \textit{limit set} of $\Ga$, denoted by $\La=\La(\Ga)$, is the set of accumulation points of $\Ga(o)$ in $\partial_\infty\xx$ within the compactification $X\cup \partial_\infty\xx$.
We assume that $\Ga$ is non-elementary, that is, $\# \La\ge 3$. Then $\La$ is the unique $\Ga$-minimal subset of $\partial_\infty\xx$. 
The \textit{convex hull} of $\La$, denoted by $\text{hull}(\La)$, is the smallest convex subset of $\xx$ containing all geodesics\footnote{by geodesics, we mean complete geodesics} in $\xx$ with endpoints in $\La$.
The \textit{convex core} of $\cal M$ is defined as the quotient manifold
$$
\cm=\Ga\ba \text{hull}(\Lambda).
$$

For $p\in \cal M$, the \textit{injectivity radius} of $p$ is the supremum of $r>0$ such that the ball $B(\tilde p, r)$ injects into $\cal M$, where $\tilde p\in X$ satisfies $p=\Gamma\ba \Gamma \tilde p$. We  denote it by $\inj(p)$. For $\e>0$, define the $\e$-thin and $\e$-thick parts of $\cal M$ by
$$
\cal M_\e=\{p\in\cal M:\inj(p)<\e\} \;\; \text{ and } \;\; \cal M_{\geq\e}=\cal M-\cal M_\e .
$$

\subsection*{Horospheres and horoballs} 
If  $\xi=gP\in \partial_\infty X $ for $g\in G$, then a horosphere of $X$ based at $\xi$ is of the form $g a N.o$ for some $a\in A$. Similarly, a horoball of $X$ based at $\xi $ is of the form $g A_{[T, \infty]} N.o$ where $A_{[T, \infty)}=\{a_t: t\ge T\}$. A horosphere (resp. horoball) in $\cal M$ is then the image of a horosphere (resp. horoball) in $X$ under the quotient map $X\to \Ga\ba X$.

\subsection*{Geometrically finite manifolds}
We now recall the notion of geometrically finite manifolds.
\begin{Def} 
We say that $\cal M$, or equivalently $\Ga$, is geometrically finite, if the unit neighborhood of $\cm$ has finite volume. 
\end{Def}

\begin{thm}\cite{Bow2}\label{lem.f2}
The manifold $\cM$ is geometrically finite if
  and only if $\cal M_{\geq\e}\cap \core (\cM)$ is compact for all sufficiently small $\e>0$.
\end{thm}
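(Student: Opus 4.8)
The plan is to prove both directions of the equivalence using the standard thick–thin decomposition together with the structure theory of cusps in the geometrically finite setting; the statement is essentially Bowditch's characterization, so the work here is largely bookkeeping once the right inputs are in place.

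First I would set up the thick–thin decomposition of $\core(\cM)$. By the Margulis lemma, for $\e>0$ smaller than a Margulis constant, each connected component of the thin part $\cal M_\e$ is either a Margulis tube around a short closed geodesic or a cusp neighborhood, the latter of the form $(\Ga_\xi\ba B_\xi)$ for a parabolic fixed point $\xi$ with stabilizer $\Ga_\xi$ a virtually nilpotent group acting on a horoball $B_\xi$. Intersecting with $\hull(\La)$, a component of $\cal M_\e\cap\core(\cM)$ coming from a cusp is a ``cuspidal region'' $\Ga_\xi\ba(B_\xi\cap\hull(\La))$. The key point, which I would invoke, is that for a geometrically finite $\Gamma$ every point of $\La$ is either a conical limit point or a bounded parabolic fixed point, and for a bounded parabolic fixed point $\xi$ the quotient $\Ga_\xi\ba(\hull(\La)-B_\xi)$ near $\xi$ is compact (this is precisely the ``bounded'' in bounded parabolic; it is the definition of geometric finiteness in Bowditch's sense that is equivalent to finiteness of the unit neighborhood of the core).

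For the forward direction, assuming $\cM$ is geometrically finite: I would write $\core(\cM)=\left(\core(\cM)\cap\cal M_{\geq\e}\right)\cup\bigcup_i C_i$ where the $C_i$ are the (finitely many — finiteness of cusps follows from finite volume of the unit neighborhood of the core) cuspidal components of the thin part. Each cuspidal region $C_i$ is closed in $\core(\cM)$, and $\core(\cM)\cap\cal M_{\ge \e} \subset \core(\cM) - \bigcup_i \inte(C_i)$ is contained in a bounded subset of $\cM$: indeed, using the bounded-parabolic property, the set $\core(\cM)-\bigcup_i C_i$ has compact closure, and $\core(\cM)\cap \cal M_{\ge\e}$ is a closed subset of this, hence compact. (If there are also Margulis-tube components of the thin part, those are themselves contained in the bounded part of $\core(\cM)$, so removing them only shrinks a set that was already relatively compact; alternatively, shrink $\e$ further so that no short geodesic is $\e$-short, which is harmless since there are only finitely many closed geodesics of bounded length in a geometrically finite manifold.) This gives compactness of $\cal M_{\ge\e}\cap\core(\cM)$ for all small $\e$.

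For the converse, suppose $\cal M_{\geq\e}\cap\core(\cM)$ is compact for all sufficiently small $\e$. Fix such an $\e$ below the Margulis constant. Then $\core(\cM)$ is covered by the compact set $\cal M_{\ge\e}\cap\core(\cM)$ together with the cuspidal components of $\cal M_\e\cap\core(\cM)$; since the compact part meets only finitely many components of the thin part, there are only finitely many cusps, and each Margulis tube is relatively compact. It remains to show the unit neighborhood of $\core(\cM)$ has finite volume, i.e. each cuspidal region, thickened by $1$, has finite volume. For a cusp with parabolic fixed point $\xi$, the region $\Ga_\xi\ba(B_\xi\cap\hull(\La))$ thickened by a unit: since $\core(\cM)$ meets the thin part in finitely many pieces and the complement of the cusps inside $\core(\cM)$ is compact, the ``cross-section at unit distance from $\partial B_\xi$'' is relatively compact; a horoball neighborhood over a relatively compact cross-section in a rank-one symmetric space has finite volume because the volume element decays exponentially as one goes into the horoball (the function $t\mapsto e^{-2\delta_{\mathrm{par}} t}$ type bound, where the relevant exponent exceeds $0$ because $\Ga_\xi$ acts cocompactly on a horosphere minus the relevant set). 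Summing over the finitely many cusps and adding the relatively compact remainder gives finite volume of the unit neighborhood of $\core(\cM)$, so $\cM$ is geometrically finite.

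\medskip
\emph{Main obstacle.} The delicate step is the converse, specifically controlling the volume of the unit neighborhood of a cuspidal region and, relatedly, ruling out infinitely many cusps or infinitely many Margulis tubes accumulating. This is exactly where one must use that $\cal M_{\ge\e}\cap\core(\cM)$ is compact \emph{for all} small $\e$ (not just one), together with the structure of parabolic subgroups in rank one (the fact that the orbit map of a parabolic fixed-point stabilizer on a horosphere, restricted to the part relevant to the convex hull, has compact quotient — i.e. bounded parabolicity is forced). In fact, since the statement is quoted from Bowditch \cite{Bow2}, the cleanest route in the paper is to cite that reference and only indicate the thick–thin reduction; I would phrase the forward direction in full and defer the converse's volume estimate to \cite{Bow2}, noting that in the variable setting of rank-one symmetric spaces the relevant exponential decay of the volume form into a horoball is what makes the finiteness work.
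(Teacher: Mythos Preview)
The paper does not prove this statement at all: it is stated with the citation \cite{Bow2} and used as a black box, with no accompanying argument. Your sketch is a reasonable outline of how Bowditch's equivalence is established (thick--thin decomposition, bounded parabolic points, finiteness of cusps, exponential decay of horoball volume), and you yourself note at the end that the cleanest route is simply to cite \cite{Bow2}; that is exactly what the paper does.
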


A limit point $\xi\in \La $ is \textit{parabolic} if its stabilizer 
$$
\Ga_\xi:=\{g\in \Ga: g\xi=\xi\}
$$ contains a parabolic element. We denote by $\La_{\op{p}}=\La_{\op{p}}(\Ga) $ the set of all parabolic limit points of $\Ga$. 

For geometrically finite $\cal M$, there exists a finite set of $\Ga$-representatives $\xi_i\in \La_{\op{p}} $ $(1\leq i\leq \ell)$ such that 
\begin{equation}\label{eq.par}
\La_{\op{p}} =\Ga\xi_1\cup\cdots\cup\Ga\xi_\ell . 
\end{equation}
 Fix $g_i\in G$
so that $\xi_i=g_iP\in \La_{\op{p}}$.
For $T>0$, define the horoball based at $\xi_i$ of depth $T$ by
 \be\label{h} \tilde {\cal H}_{\xi_i, T} = g_iNA_{[T, \infty)}. o \subset X\quad  \text{ and } \quad 
 \cal H_{\xi_i, T_i}  =\Ga \ba \Ga \tilde{\cal H}_{\xi_i, T_i} \subset \cal M.\ee
 For all sufficiently small $\e>0$, there exist $T_1, \cdots, T_\ell >0$ such that for each $1\le i\le \ell$,
the collection $\{\gamma \tilde {\cal H}_{\xi_i, T_i}:\gamma\in \Ga\}$ consists of disjoint horoballs, and 
the following disjoint decomposition holds:
\begin{equation}\label{eq.tt}
\cal M=\cal M_{\geq\e }\cup \cal H_{\xi_1, T_1} \cup\cdots\cup\cal H_{\xi_\ell, T_\ell}.  
\end{equation}

We record a simple observation for later use.
\begin{lem}\label{complete}  For each $1\le i \le\ell$, the horoball 
$\cal H_{\xi_i, T_i}$
contains no complete geodesic.
    \end{lem}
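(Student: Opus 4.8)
The plan is to argue by contradiction, using the fact that a horoball is an increasing union of properly embedded balls whose ``thickness'' along any geodesic direction is finite. Concretely, suppose some complete geodesic $\ell\subset X$ projects entirely into $\cal H_{\xi_i,T_i}$. Lifting $\ell$, and after applying a suitable element of $\Ga$ if necessary, we may assume $\ell$ lies in one of the disjoint horoballs $\g\tilde{\cal H}_{\xi_i,T_i}$; since these are disjoint and $\ell$ is connected, $\ell$ must in fact lie inside a \emph{single} such horoball, which (after translating by $\g^{-1}$) we may take to be $\tilde{\cal H}_{\xi_i,T_i}=g_iNA_{[T_i,\infty)}.o$.

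Next I would transfer the problem to the upper half-space / Busemann-function picture at the basepoint $\xi_i=g_iP$. Let $\beta_{\xi_i}$ denote the Busemann function associated to $\xi_i$ (normalized so that $\tilde{\cal H}_{\xi_i,T_i}=\{x:\beta_{\xi_i}(x)\ge T_i\}$, using the parametrization $\sfd(o,a_to)=|t|$). The key geometric input is that $\beta_{\xi_i}$ restricted to any complete geodesic $\ell$ of $X$ is \emph{not} bounded below: either $\ell$ has $\xi_i$ as one of its endpoints at infinity, in which case $\beta_{\xi_i}\to -\infty$ along the \emph{other} end of $\ell$, or both endpoints of $\ell$ differ from $\xi_i$, in which case $\beta_{\xi_i}\to-\infty$ along \emph{both} ends of $\ell$. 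This is a standard convexity property of Busemann functions on CAT($-1$) (more precisely, rank-one symmetric) spaces: $\beta_{\xi_i}$ is convex along geodesics and attains a finite infimum along a geodesic only at the unique point nearest to $\xi_i$, so it is certainly unbounded below on any complete geodesic. Hence no complete geodesic can satisfy $\beta_{\xi_i}\ge T_i$ everywhere, contradicting the inclusion $\ell\subset\tilde{\cal H}_{\xi_i,T_i}$.

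I would then simply cite or record the convexity/strict-convexity of Busemann functions along geodesics in rank-one symmetric spaces (e.g.\ from \cite{Eb} or the standard CAT($-1$) references) rather than reproving it. The remaining bookkeeping is routine: the passage from ``$\ell$ projects into $\cal H_{\xi_i,T_i}$'' to ``a lift of $\ell$ lies in a single $\g\tilde{\cal H}_{\xi_i,T_i}$'' uses only that the horoballs in the $\Ga$-orbit are pairwise disjoint (part of the setup of \eqref{eq.tt}) together with connectedness of $\ell$; and the contradiction is immediate from unboundedness of $\beta_{\xi_i}$ below.

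The main obstacle, such as it is, is purely expository: making sure the normalization of the Busemann function matches the parametrization of $A$ and the definition of $\tilde{\cal H}_{\xi_i,T_i}=g_iNA_{[T_i,\infty)}.o$ in \eqref{h}, so that ``$x\in\tilde{\cal H}_{\xi_i,T_i}$'' really does translate to ``$\beta_{\xi_i}(x)\ge T_i$''. Once that identification is pinned down, the proof is essentially one line. I do not anticipate any genuine difficulty, since this lemma is exactly the elementary statement that horoballs are ``geodesically thin'' in negatively curved spaces.
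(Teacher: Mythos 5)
Your proposal is correct and follows essentially the same route as the paper: lift the geodesic, use disjointness of the $\Ga$-translates of $\tilde{\cal H}_{\xi_i,T_i}$ to place it inside a single horoball of $X$, and then invoke the fact that a horoball contains no complete geodesic --- which the paper phrases as ``a complete geodesic has two distinct endpoints in $\partial_\infty X$'' while you phrase it via unboundedness below of the Busemann function along geodesics, two formulations of the same fact. Just be careful that convexity of $\beta_{\xi_i}$ alone does not give unboundedness below (a Euclidean half-space contains lines); the correct justification is strict negative curvature of the rank-one space, i.e.\ that the closure of a horoball meets $\partial_\infty X$ only at its base point.
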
\begin{proof} 
        Suppose that $\cal H_{\xi_i, T_i}$ contains a complete geodesic.  By the disjointness property of the horoballs $\{\gamma \tilde {\cal H}_{\xi_i, T_i}:\ga\in \Ga\}$
      some translate $\gamma \tilde {\cal H}_{\xi_i, T_i}$ would then contain a complete geodesic of $X$.  However, every complete geodesic in $X$ 
         connects two distinct points of $\partial_\infty X$, a contradiction.
    \end{proof}

\subsection*{Totally geodesic submanifolds contained in the convex core}
For each closed subgroup $H$ of $G$ with Lie algebra $\frak h$, the restriction $\langle \cdot, \cdot\rangle|_{\frak h}$ defines an inner product on $\frak h$. 
Left translation within $H$ produces a left-invariant sub-Riemannian metric on $H$, which in turn determines a unique left-invariant volume form $\op{vol}_H$, or equivalently a left-invariant Haar measure $\mu_H$, on $H$.
If $H$ is unimodular, $\mu_H$ is also right $H$-invariant.

For an orbit $xH\subset \Ga \ba G$ with $H\in \mathscr H$,
the measure $\mu_H$ induces a unique
right $H$-invariant measure on $\op{Stab}_H(x) \ba H$, which we again denote by $\mu_H$ by abuse of notation.
We write $$\vol(xH) =\mu_H( \op{Stab}_H(x) \ba H) $$ and set
$$
\vol(xH.o) =\frac{\vol(xH)}{\mu_{H\cap K}(H\cap K)}.
$$

We define the set\footnote{The notation $\RFM$ was introduced in \cite{MMO1} for the case $G=\op{SO}(3,1)^\circ$, where it denotes the \textit{renormalized frame bundle}.
When $G=\op{SO}(n,1)^\circ$, the quotient $\Ga\ba G$ can be identified with the  frame bundle of $\cal M=\Ga\ba X$.
This identification does not extend to general rank-one spaces, so the term "frame bundle" is slightly abusive. Nevertheless, we retain the notation for consistency.
}
\begin{equation}\label{eq.xe0}
\RFM=\{[g]\in \Ga\ba G: gP^+, gP^- \in\La\}.   
\end{equation}
For $g\in G$, the points $gP^+$ and $gP^-$ are precisely the two endpoints of the
geodesic $gA.o$.
Note that if one of the end points of $gA.o$ lies outside the limit set, $gA.o$ is unbounded. Therefore, any bounded $A$-orbit is contained in $\RFM$; 

Note that $\RFM$ is right-invariant under the subgroup
$P^+\cap P^-= \op{C}_G(A)$ and the centralizer $\op{C}_G(A)$ is equal to $A\op{C}_K(A)$.
It was shown in \cite{MMO} that the periodic $A\op{C}_K(A)$ orbits are equidistributed in $\RFM$ with respect to the Bowen-Margulis-Sullivan measure; in particular, they are dense in $\RFM$. Since such periodic orbits are bounded,
their closure is contained in the smallest closed subset of $\Ga\ba G$ containing all bounded $A$-orbits, while the previous paragraph shows that any bounded $A$-orbit lies in $\RFM$.
Therefore we obtain:
\begin{prop}\label{rfm}
    For geometrically finite $\cM$, the set $\RFM$ coincides with the smallest closed subset of $\Ga\ba G$ containing all bounded $A$-orbits in $\Ga\ba G$.
\end{prop}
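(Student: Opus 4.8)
The plan is to establish the two inclusions between $\RFM$ and the smallest closed subset $\mathcal{Z}\subset\Ga\ba G$ containing all bounded $A$-orbits. For the inclusion $\mathcal{Z}\subset\RFM$, I would argue as already sketched before the statement: if $[g]\in\Ga\ba G$ has a bounded $A$-orbit, then the geodesic $gA.o$ projects to a bounded geodesic in $\cM$, hence both its endpoints $gP^+$ and $gP^-$ must lie in $\La$ (an endpoint outside $\La$ would force the geodesic ray in that direction to leave every compact set, since the complement of $\La$ in $\partial_\infty X$ consists of directions along which geodesics escape; more precisely, a geodesic with bounded projection has both endpoints in the limit set). Thus every bounded $A$-orbit is contained in $\RFM$. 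Since $\RFM$ is closed in $\Ga\ba G$ — it is the preimage under $[g]\mapsto(gP^+,gP^-)$ of the closed set $(\La\times\La)\setminus\Delta$ in $\partial_\infty X\times\partial_\infty X$, using that $\La$ is closed — and $\mathcal{Z}$ is by definition the smallest closed set containing all bounded $A$-orbits, we get $\mathcal{Z}\subset\RFM$.

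For the reverse inclusion $\RFM\subset\mathcal{Z}$, I would invoke the density of periodic $A\,\op{C}_K(A)$-orbits in $\RFM$ with respect to the Bowen-Margulis-Sullivan measure, established in \cite{MMO}; in particular, the union of such periodic orbits is dense in $\RFM$. Each periodic $A\,\op{C}_K(A)$-orbit is compact, hence its underlying $A$-orbit is bounded, so it lies in $\mathcal{Z}$. Since $\mathcal{Z}$ is closed and contains a dense subset of $\RFM$, we conclude $\RFM\subset\mathcal{Z}$. Combining the two inclusions gives $\RFM=\mathcal{Z}$.

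The main obstacle is the careful justification of the geometric claim that a bounded $A$-orbit has both endpoints in $\La$, and conversely that the periodic orbits genuinely exhaust $\RFM$ in the closure; the first is a standard consequence of the definition of the limit set together with the description of $\core(\cM)$ via $\hull(\La)$, and the second is precisely the content of the equidistribution result cited from \cite{MMO}, which requires geometric finiteness for the BMS measure to be finite and its support to be all of $\RFM$. I would flag that the geometric finiteness hypothesis enters exactly here, through \cite{MMO}, and not in the elementary inclusion $\mathcal{Z}\subset\RFM$.
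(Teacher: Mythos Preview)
Your proposal is correct and follows essentially the same argument as the paper: the inclusion $\mathcal{Z}\subset\RFM$ via the observation that an $A$-orbit with an endpoint outside $\La$ is unbounded together with the closedness of $\RFM$, and the reverse inclusion via the density of periodic $A\,\op{C}_K(A)$-orbits in $\RFM$ from \cite{MMO}. One small wording issue: the set $(\La\times\La)\setminus\Delta$ is not closed in $\partial_\infty X\times\partial_\infty X$, but it is closed in the actual codomain $(\partial_\infty X\times\partial_\infty X)\setminus\Delta$ of the map $g\mapsto(gP^+,gP^-)$, which is what you need.
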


The image of $\RFM$ under the basepoint projection $\Ga\ba G\to \cal M$ is contained in $\cm$, though the two sets need not coincide in general.

The next lemma characterizes when a totally geodesic submanifold of $\cM$ lies in the convex core in terms of its associated 
$H$-orbit in $\RFM$.
\begin{lemma}\label{vol} \label{lem.q} Let $\cal N$ be a totally geodesic submanifold of $\cal M$ of dimension at least two.
Let $\cal N=x H.o$ for some $H\in\mathscr H$.
    Then $$(1)\Rightarrow (2)\Leftrightarrow (3) \Leftrightarrow (4)$$ 
where
    \begin{enumerate}
        \item $\mu_H(xH)<\infty $, equivalently, $\op{vol}(\cal N)<\infty .$
    \item $\cal N \subset \cm$.
        \item[(3)] $xH\subset\RFM$.
        \item[(4)] $xH^{\nc}\subset \RFM$.
    \end{enumerate}
\end{lemma}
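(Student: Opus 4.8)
The plan is to establish the implications in the order $(1)\Rightarrow(3)$, $(3)\Leftrightarrow(4)$, and $(3)\Leftrightarrow(2)$, using the geometric description of $\RFM$ together with Lemma~\ref{yb} which identifies $Y=H.o$ with the convex hull of its boundary at infinity.

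\medskip

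\noindent\textbf{Step 1: $(3)\Leftrightarrow(4)$.} Since $H\in\mathscr H$, by Lemma~\ref{lem.h1} we have $H=\op{N}_G(H^{\nc})^\circ$ and $H.o=H^{\nc}.o$, with $H^{\nc}\in\mathscr H^*$ a $\Theta$-invariant simple non-compact subgroup containing $A$. Moreover $H$ is commensurable with $H^{\nc}\op{C}_G(H^{\nc})$ with $\op{C}_G(H^{\nc})$ compact, so $H^{\nc}$ has finite index in $H$ up to the compact centralizer, and more to the point $H=H^{\nc}\op{C}_K(H^{\nc})$. The defining condition in~\eqref{eq.xe0} for a point $[g]$ to lie in $\RFM$ depends only on the pair $(gP^+,gP^-)\in\partial_\infty X\times\partial_\infty X$, i.e.\ on the endpoints of the geodesic $gA.o$. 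For $h\in\op{C}_K(H^{\nc})$, the geodesic $ghA.o$ has endpoints $ghP^\pm$; since $\op{C}_K(H^{\nc})$ normalizes $H^{\nc}\ni A$ and fixes $o$, one checks the set of geodesics $\{ghA.o : h\in H\}$ is exactly $\{gh'A.o: h'\in H^{\nc}\}$ up to the action of the compact group $\op{C}_K(H^{\nc})$ permuting them within $\partial_\infty(gH.o)$. Concretely $xH\subset\RFM$ iff for every $h\in H$ both endpoints of $xhA.o$ lie in $\La$, and since $H.o=H^{\nc}.o$ the geodesics through points of $xH.o=xH^{\nc}.o$ are the same; hence $xH\subset\RFM \Leftrightarrow xH^{\nc}\subset\RFM$.

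\medskip

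\noindent\textbf{Step 2: $(3)\Leftrightarrow(2)$.} Write $\cal N=\Ga\ba\Ga Y$ with $Y=gH.o$ for a lift $g$ of $x$. By Lemma~\ref{yb}, $Y=\hull(\partial_\infty Y)$ where $\partial_\infty Y=gHP/P\subset\partial_\infty X$. Now $\cal N\subset\cm=\Ga\ba\hull(\La)$ iff $\Ga Y\subset\hull(\La)$ iff $Y\subset\hull(\La)$ (as $\hull(\La)$ is $\Ga$-invariant). Since $\hull(\La)$ is convex and contains every geodesic with both endpoints in $\La$, and $Y$ is spanned by geodesics with endpoints in $\partial_\infty Y$, we get $Y\subset\hull(\La)$ as soon as $\partial_\infty Y\subset\La$; conversely if $Y\subset\hull(\La)$ then $\partial_\infty Y\subset\partial_\infty\hull(\La)=\La$ (using that $\La$ is closed and $\hull(\La)$ accumulates exactly on $\La$). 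So $\cal N\subset\cm \Leftrightarrow \partial_\infty Y\subset\La$. On the other hand, $\partial_\infty Y\subset\La$ says precisely that $ghP^{\pm}\in\La$ for all $h\in H$ with $gh$ ranging over representatives — equivalently $xH\subset\RFM$ by~\eqref{eq.xe0}, since the geodesics $xhA.o$ for $h\in H$ have all their endpoints among $\partial_\infty Y$, and conversely every point of $\partial_\infty Y$ is an endpoint of such a geodesic (as $Y$ is itself a rank-one symmetric space, any boundary point is an endpoint of a geodesic in $Y$ through any interior point). This gives $(2)\Leftrightarrow(3)$.

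\medskip

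\noindent\textbf{Step 3: $(1)\Rightarrow(3)$.} Suppose $\mu_H(xH)<\infty$, so $xH$ carries an $H$-invariant probability measure $\mu$. Since $H\supset A$, the orbit $xH$ is $A$-invariant and $\mu$ is an $A$-invariant probability measure on $\Ga\ba G$. By the Poincaré recurrence theorem, $\mu$-a.e.\ point $y\in xH$ is $A$-recurrent, i.e.\ has a bounded $A$-orbit $yA$; by Proposition~\ref{rfm} (the geometric finiteness hypothesis is not needed here, only that bounded $A$-orbits lie in $\RFM$, which is the elementary observation following~\eqref{eq.xe0}), each such $yA\subset\RFM$. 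Thus $\mu$ is supported on $\overline{\RFM}=\RFM$ (it is closed), so $xH\cap\RFM$ has full $\mu$-measure; since it is also closed in $xH$ and $\mu$ has full support on $xH$ (the measure $\mu_H$ on $\op{Stab}_H(x)\ba H$ is everywhere positive), we conclude $xH\subset\RFM$. Actually the cleanest route: $\RFM$ is right $A$-invariant and closed, $\mu(xH\cap\RFM)=1$, and $xH\cap\RFM$ is closed and $H$-invariant? It need not be $H$-invariant a priori, but it is $A$-invariant and of full measure, hence its closure has full measure; since $\mu_H$ gives positive mass to every nonempty open subset of $xH$, full-measure closed sets are all of $xH$. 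Hence $xH\subset\RFM$.

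\medskip

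\noindent The main obstacle is Step~1 (the equivalence $(3)\Leftrightarrow(4)$): one must argue carefully that enlarging $H^{\nc}$ to $H=H^{\nc}\op{C}_K(H^{\nc})$ does not change which geodesics $xhA.o$ appear — i.e.\ that the compact centralizer $\op{C}_K(H^{\nc})$ merely permutes the geodesics of $xH^{\nc}.o$ among themselves and contributes no new endpoints outside $\partial_\infty(xH^{\nc}.o)$. This is where the identity $H.o=H^{\nc}.o$ from Lemma~\ref{lem.h1} does the work, but the bookkeeping between orbits in $\Ga\ba G$ and geodesics in $X$ requires care. Steps 2 and 3 are comparatively routine given Lemma~\ref{yb} and Proposition~\ref{rfm}.
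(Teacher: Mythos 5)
Your Steps 1 and 2 are correct and in substance the same as the paper's argument: the paper proves $(2)\Leftrightarrow(3)$ by observing that for $h\in H$ the geodesic $gha_t.o$ stays in $gH.o\subset\hull(\La)$ (since $A<H$), so its two endpoints $ghP^{\pm}$ lie in $\overline{\hull(\La)}\cap\partial_\infty X=\La$; and it gets $(2)\Leftrightarrow(4)$ by running the identical argument with $H^{\nc}$ in place of $H$, using $xH^{\nc}.o=\cN$ and $A<H^{\nc}$, rather than your direct $(3)\Leftrightarrow(4)$. Your route is fine, and the point you flagged as the ``main obstacle'' is actually immediate: $\op{C}_G(H^{\nc})$ is a compact subgroup of $\op{C}_G(A)$, hence contained in $M\subset P^{\pm}$, so writing $h=h'c$ with $h'\in H^{\nc}$ and $c\in\op{C}_G(H^{\nc})$ gives $ghP^{\pm}=gh'P^{\pm}$, and $\RFM$-membership of $[gh]$ and $[gh']$ coincide.

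The genuine gap is in Step 3. Poincar\'e recurrence does not give bounded $A$-orbits: it only gives that for $\mu$-a.e.\ $y$ there are times $t_n\to\pm\infty$ with $ya_{t_n}$ returning to a fixed compact set. Indeed the intermediate claim is false in general: if $\Ga\cap H$ is a nonuniform lattice in $H$, then $\mu_H(xH)<\infty$ while $\mu$-a.e.\ $A$-orbit in $xH$ is unbounded. So you cannot invoke the observation that bounded $A$-orbits lie in $\RFM$. What your strategy actually requires is the implication ``two-sided $A$-recurrence of $y=[g]$ forces both endpoints $gP^{\pm}$ into $\La$,'' and proving this is exactly the content of the paper's proof of $(1)\Rightarrow(3)$: if, say, $gP^{-}\notin\La$, then proper discontinuity of $\Ga$ on $\partial_\infty X-\La$ gives a neighborhood of that endpoint meeting none of its nontrivial $\Ga$-translates, whence the injectivity radius $r_t=\inj_{\Ga\ba G}(xa_t)$ tends to infinity along the corresponding ray; in particular the orbit leaves every compact set permanently and is not recurrent. (The paper then concludes directly, without recurrence, that $\mu_H(xH)=\infty$ because the balls $B^H_{r_t}$ with $r_t\to\infty$ inject into $\op{Stab}_H(x)\ba H$.) Your recurrence approach can be repaired by inserting precisely this divergence argument, but as written Step 3 rests on a false equivalence between recurrence and boundedness and therefore does not establish $(1)\Rightarrow(3)$.
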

\begin{proof}  Let $g\in G$ represent $x=\Ga g$ and let $\pi: \Ga\ba G\to \Ga\ba X$ denote the natural projection $x\mapsto x.o$. The implication $(3) \Rightarrow (2)$ holds since $\pi(\RFM)\subset\cm$.
To prove $(2)\Rightarrow (3)$, suppose  $xH.o\subset \cm$.
Then $gH. o\subset \hull (\La)$. 

For $h\in H$, consider
$$(gh)^{\pm}:= ghP^{\pm}\in \partial_\infty X.$$ 
The points $(gh)^{\pm}$ are represented by
the geodesic rays $\{gha_t: t\ge 0\}$
and $\{gha_t: t\le 0\}$ respectively.
It follows from fact that the closure of $\hull (\La)$ in the compactification $X\cup \partial_\infty X$
coincides with $\La$ that $(gh)^{\pm}\in \La$.
This implies that  $gh\in \RFM$, proving $(2)\Leftrightarrow (3)$.
Since $xH^{\nc}.o=\cal N$, this also implies that $(2)\Leftrightarrow  (4)$.

       We now prove $(1)\Rightarrow (3)$.
    Suppose that the orbit $xH$ is not contained in $\RFM$.
    By replacing $x$ by $xh$ for some $h\in H$, we may assume that
    one of $gP^+$ or $gP^-$ does not belong to the limit set.
Suppose $\xi=gP^-\not\in \La$.
    Since $\Ga$ acts properly discontinuously on $\partial_\infty X-\La$, there exists a neighborhood $\cal O$ of $\xi$ in the compactification $X\cup \partial_\infty X$ such that $\ga \cal O \cap \cal O=\emptyset$ for all $\ga\in \Ga-\{e\}$.
    Consider the geodesic ray $t\mapsto ga_t.o$, $t\ge 0$.
    For all sufficiently large $t$, we have $ga_t.o\in \cal O$, and the injectivity radius $r_t:=\inj_{\Ga\ba G}(xa_t)$ tends to $\infty$ as $t\to\infty$.
    Since $A\subset H$ and the ball $ B_{r_t}^H=\{h\in H:  \mathsf d(e, h)<r_t\}$ injects to $\Stab_H(x)\ba H$, we have
    $$
    \mu_H (xH) \ge \limsup_{t\to \infty} \mu_H(B_{r_t} ^H) =\mu(H)=\infty.
    $$
    Hence $\mu_H(xH)=\infty$. The case $\xi=gP^+\not\in \La$ can be proved similarly by sending $t\to -\infty$. \end{proof}

We also observe:

\begin{lem} Let $\cN$ be as in Lemma \ref{vol}. If $\cN$ is bounded,
then $\cN\subset \core (\cM)$.
\end{lem}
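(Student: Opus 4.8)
The plan is to use the characterization of ``contained in the convex core'' from Lemma~\ref{vol}, specifically the equivalence $(2)\Leftrightarrow(3)$, which reduces the claim to showing that if $\cN=xH.o$ is bounded, then $xH\subset\RFM$. Since $\cN$ is bounded in $\cM$, its preimage in $\Ga\ba G$ under the basepoint projection is also bounded: indeed, a point of $\Ga\ba G$ lying over a given point of $\cM$ differs by an element of the compact group $K$, so boundedness of $xH.o$ in $\cM$ forces $xH\cdot K$, and in particular $xH$, to be relatively compact in $\Ga\ba G$. In fact it is cleaner to argue directly on orbits: since $A\subset H$, every $A$-orbit of the form $(xh)A$ with $h\in H$ is contained in $xH$, hence is bounded in $\Ga\ba G$.

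Next I would invoke the observation, recorded just before Proposition~\ref{rfm}, that any bounded $A$-orbit in $\Ga\ba G$ lies in $\RFM$: if one of the two endpoints $gP^+$, $gP^-$ of the geodesic $gA.o$ were outside the limit set $\La$, then that geodesic would be unbounded in $\cM$ (it would escape into the part of $\partial_\infty X\setminus\La$ on which $\Ga$ acts properly discontinuously), contradicting boundedness of $(xh)A$. Applying this to each $h\in H$, we conclude that $ghP^+,ghP^-\in\La$ for all $h\in H$, i.e.\ $xH\subset\RFM$. By Lemma~\ref{vol}, the implication $(3)\Rightarrow(2)$ then gives $\cN=xH.o\subset\cm=\core(\cM)$, as desired.

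The only point requiring a little care is the step that translates boundedness of the immersed submanifold $\cN\subset\cM$ into relative compactness of the orbit $xH$ (or of the relevant $A$-orbits) in $\Ga\ba G$; this is where the hypothesis ``bounded'' is used, and it rests on the fact that the projection $\Ga\ba G\to\cM$ is proper (used already in the proof of Lemma~\ref{OS}), so that the preimage of a bounded set is bounded. Everything else is a direct citation of Lemma~\ref{vol} and the remarks preceding Proposition~\ref{rfm}. I do not anticipate a genuine obstacle here: the lemma is essentially a bookkeeping consequence of the geometric criterion already established, and the one-line proof below makes this explicit.

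\begin{proof}
Write $x=\Ga g$ for some $g\in G$. Since $\cN=xH.o$ is bounded in $\cM$ and the basepoint projection $\pi\colon\Ga\ba G\to\cM$ is proper, the orbit $xH$ is relatively compact in $\Ga\ba G$; in particular, for every $h\in H$ the $A$-orbit $(xh)A$ is bounded, as $A\subset H$. As noted before Proposition~\ref{rfm}, a bounded $A$-orbit must have both endpoints of its associated geodesic in $\La$: if, say, $ghP^-\notin\La$, then since $\Ga$ acts properly discontinuously on $\partial_\infty X-\La$, the ray $t\mapsto gha_t.o$ ($t\le 0$) would be unbounded in $\cM$, contradicting boundedness of $(xh)A$; similarly $ghP^+\in\La$. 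Thus $gh\in\RFM$ for all $h\in H$, i.e.\ $xH\subset\RFM$. By the implication $(3)\Rightarrow(2)$ of Lemma~\ref{vol}, it follows that $\cN\subset\cm=\core(\cM)$.
\end{proof}
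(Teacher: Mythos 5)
Your proof is correct and follows essentially the same route as the paper: boundedness of $\cN$ gives boundedness of $xH$, hence each $A$-orbit $xhA$ is bounded and so lies in $\RFM$ (the observation recorded before Proposition~\ref{rfm}), and the implication $(3)\Rightarrow(2)$ of Lemma~\ref{vol} concludes. The only quibble is a harmless sign-convention slip: with the paper's identification $\partial_\infty X\simeq G/P$, $P=P^-$, the endpoint $ghP^-$ corresponds to the forward ray $t\ge 0$, not $t\le 0$, but the argument is symmetric in the two endpoints so nothing is affected.
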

\begin{proof}
Suppose that $\cN=xH.o$ is bounded, or equivalently, $xH$ is bounded. For any $h\in H$, since $xhA\subset xH$ is bounded, we have $xh\in \RFM$.
    Hence $xH\subset \RFM$, which implies that $\cN\subset \core (\cM)$.
\end{proof}

\begin{rmk} \rm We remark that an analogue of Lemma \ref{vol}
   does not hold for horospheres. There are maximal horospheres
   $xN.o$ contained in the convex core of $\cal M$ but $xN k$ does not need to be contained in $\RFM$ for any $k\in K$. For example, consider a Kleinian group
   $\G<\PSL_2(\c)$ such that $\infty$, the fixed point of $N$, is a rank two parabolic limit point of $\Ga$  and $\La \ne \hat\c$. Then $[e]a_t N k \subset \Ga\ba G$ is not contained in $\RFM$ for any diagonal $a_t\in A$ and $k\in K$. But 
for all sufficiently large $t$, $[e]a_t N .o$ is contained in the convex core of $\Ga\ba \bH^3$.
\end{rmk}

 \section{Orbit closure classification inside $\RFM$}
 In this section, we classify the closures of orbits of connected closed subgroups generated by unipotent elements contained in the renormalized frame bundle  $\RFM$.
 This orbit-closure theorem forms the dynamical backbone of the rigidity and finiteness results proved later. Its proof relies on Ratner's measure classification theorem and the avoidance theorem of Dani-Margulis, adapted to the geometrically finite, possibly infinite-volume, setting.

\begin{theorem}\label{Theoremmax}
Let $\Ga<G$ be geometrically finite and $W<G$ a connected closed subgroup generated by one-parameter unipotent subgroups. Suppose that $xW\subset \RFM$ for $x\in \Ga\ba G$.
Then there exists a connected  Lie subgroup $L$ containing $W$ such that
$$\overline{xW} =xL$$
and $ \op{Stab}_L(x)$ is a lattice in $L$. Moreover, $L$ is either reductive or a compact extension of a connected unipotent subgroup (as given by Lemma \ref{pppp}).
\end{theorem}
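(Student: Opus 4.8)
The plan is to reduce the orbit-closure statement to Ratner's theorems via the non-divergence of unipotent averages, following the scheme sketched in the introduction.

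\textbf{Step 1: Non-divergence and existence of an invariant probability measure.} Since $W$ is generated by one-parameter unipotent subgroups, it suffices (by Ratner's arguments in \cite{RaGF}) to work with a single one-parameter unipotent subgroup $U=\{u_s\}\subset W$ and study $\overline{xU}$; I would first record this reduction. Fix $x\in\RFM$. The key input is that $\RFM$ is, by Theorem \ref{lem.f2} and geometric finiteness, essentially concentrated on the thick part together with the cusp neighborhoods, and complete geodesics cannot enter deep into the horoballs $\cal H_{\xi_i,T_i}$ (Lemma \ref{complete}); combined with the non-divergence of unipotent flows in the infinite-volume rank-one setting due to Benoist--Oh \cite{BO1} and Buenger--Zheng \cite{BZ}, any weak-$^*$ limit $\nu$ of $\nu_T=\frac1T\int_0^T\delta_{xu_s}\,ds$ is a \emph{probability} measure on $\Ga\ba G$, with no escape of mass. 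I would cite those non-divergence statements and verify their hypotheses apply here, using that $xU\subset\RFM$ so the $A$-orbits through nearby points stay bounded.

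\textbf{Step 2: Measure classification on ergodic components.} The limit measure $\nu$ is $U$-invariant, hence $W$-invariant after passing to all one-parameter unipotents generating $W$ (standard argument). Decompose $\nu$ into $U$-ergodic components; by Ratner's measure classification theorem \cite{Ra}, a.e.\ ergodic component is the homogeneous measure on a closed orbit $yL'$ of some connected subgroup $L'\supset U$ with $\Stab_{L'}(y)$ a lattice. To promote this to a single homogeneous measure and a single orbit closure, I would apply the avoidance (linearization) theorem of Dani--Margulis \cite{DM} to rule out escape of the orbit into lower-dimensional singular sets, concluding that $\overline{xU}=xL$ for a single connected $L\supseteq U$ with $\Stab_L(x)$ a lattice; then $\overline{xW}=xL$ as well since $xW\subset\overline{xU}\cdot(\text{finitely many unipotents})=xL$ and $xL$ is $W$-invariant and closed.

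\textbf{Step 3: Structure of $L$.} Finally I would identify the structure of $L$. Since $\Stab_L(x)$ is a lattice in $L$, $L$ is unimodular, so Lemma \ref{pppp} applies: $L$ is either reductive, or $L=QV$ with $V$ its unipotent radical and $Q$ compact. This is exactly the stated dichotomy. I would also note $L\supset W\supset U\neq\{e\}$, so $L$ is non-trivial, and that $\Stab_L(x)$ being a lattice together with $xL\subset\RFM$ forces the correct behavior at cusps.

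\textbf{Main obstacle.} The crux is Step 1: ensuring no escape of mass for $\nu_T$ in the \emph{infinite-volume} geometrically finite setting. Ordinary non-divergence (Dani--Margulis) requires finite volume; here one must use that $x\in\RFM$ — so the $U$-orbit, being ``trapped'' by bounded $A$-orbits in $\RFM$ and unable to penetrate deep into cusp horoballs (Lemma \ref{complete}) — together with the quantitative non-divergence estimates of \cite{BO1, BZ} tailored to $\RFM\cap\core(\cM)$, which is compact in the thick part by Theorem \ref{lem.f2}. Verifying that these estimates give a uniform lower bound on the mass $\nu_T$ assigns to a fixed compact set, uniformly in $T$, is the technical heart of the argument; everything downstream is a standard application of Ratner--Dani--Margulis.
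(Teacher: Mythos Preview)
Your overall architecture matches the paper's: non-divergence via \cite{BO1,BZ} plus geometric finiteness to get a probability limit, then Ratner measure classification and Dani--Margulis avoidance, and finally Lemma~\ref{pppp} for the structure of $L$. The identification of Step~1 as the crux is exactly right.

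However, Step~2 as written has a genuine gap. The Dani--Margulis avoidance theorem (Theorem~\ref{lem.DM2}) applies only to points $x$ lying in a compact set \emph{disjoint from} $\mathscr{S}(U)$; it says nothing when $x\in\mathscr{S}(U)$, and that case certainly occurs (indeed it is the generic case when $\Gamma$ is not a lattice). The paper's argument is a dichotomy (Proposition~\ref{prop.iso}): if $x\notin\mathscr{S}(U)$, avoidance gives $\nu(\mathscr{S}(U))=0$, so by Ratner every ergodic component has $L_\alpha=G$, whence $\nu$ is Haar and $\Gamma$ is a lattice; if $x\in\mathscr{S}(U)$, one takes a \emph{minimal} closed connected $L\ni U$ with $xL$ of finite volume, and then applies finite-volume Ratner (Theorem~\ref{R}) \emph{inside} $xL\simeq(\op{Stab}_L(x))\backslash L$ together with minimality to get $\overline{xU}=xL$. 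Your sentence ``apply avoidance \ldots concluding $\overline{xU}=xL$ for a single $L$'' conflates these two cases and would not go through as stated.

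Two smaller points: the claim that the limit $\nu$ is automatically $W$-invariant is unjustified (and not needed---the paper handles $U$ first and lifts to $W$ afterwards); and the passage from $\overline{xU}$ to $\overline{xW}$ is not ``finitely many unipotents'' but the countability of $\mathcal A_x$ (Theorem~\ref{ragf}) plus Baire category for unipotent $W$, and a positive-measure argument over conjugates $wV^+w^{-1}$ when $W$ is simple. You already cite \cite{RaGF} for this reduction, so this is more a matter of being precise than a missing idea.
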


\subsection*{Three theorems for general $\Gamma$ and $G$}
The rest of the section is devoted to proving Theorem \ref{Theoremmax}. We begin by recalling several foundational results on unipotent dynamics:
\begin{thm}[Ratner \cite{Ra}]\label{ra} Let $G$ be a connected linear Lie group and $\Ga<G$ be a discrete subgroup. Let $U$ be a one-parameter unipotent subgroup of $G$.
   Any $U$-invariant, ergodic probability measure $\nu$ on $\Ga\ba G$ is an $L$-invariant measure supported on a closed orbit $xL\subset \Ga \ba G$ for some $x\in \Ga\ba G$ and some connected closed subgroup $L<G$ containing $U$.
\end{thm}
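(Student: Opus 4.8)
Since the displayed statement is Ratner's measure classification theorem, I can only outline the architecture of a proof; the real content is analytic and lengthy. First I would reduce to a convenient model: every connected linear Lie group embeds as a closed subgroup of some $\SL_N(\R)$ with $\Ga$ remaining discrete, so I may assume $G<\SL_N(\R)$, and I write $U=\{u_t\}$. I would then introduce the stabilizer $\Sigma(\nu)=\{g\in G:g_*\nu=\nu\}$, a closed subgroup of $G$ containing $U$, and set $L:=\Sigma(\nu)^\circ$. The theorem then splits into two claims: (i) $\nu$ is in fact $L$-invariant, with $L$ as large as the dynamics forces (no transverse direction of drift survives); and (ii) $\supp\nu$ is a single closed orbit $xL$ on which $\nu$ is the unique $L$-invariant probability measure. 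The constant companion of the argument is the pointwise ergodic theorem for the amenable one-parameter flow $\{u_t\}$: $\nu$-almost every $y$ is \emph{generic}, meaning $\frac1T\int_0^T f(yu_t)\,dt\to\int f\,d\nu$ for every $f\in C_c(\Ga\ba G)$.

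The engine of the proof, and the step I expect to be hardest, is the \textbf{polynomial divergence} (the ``$H$-property'') of unipotent orbits. If $y$ and $y'=yg$ are nearby points, then the relative displacement of $yu_t$ and $y'u_t$ is $u_{-t}gu_t$; since $\Ad(u_{-t})$ is a polynomial in $t$, this quantity drifts \emph{polynomially} rather than exponentially, and --- writing $g=\exp\xi$ with $\xi$ small and expanding $\Ad(u_{-t})\xi$ in $t$ --- its leading coefficient lies in the $U$-fixed subalgebra $\op{Lie}(\op{C}_G(U))$. Thus the fastest direction of separation is forced to lie along the centralizer of $U$. Turning this into a usable tool --- uniform polynomial estimates, the correct choice of time scale, and in particular controlling everything on the non-compact space $\Ga\ba G$ --- is where essentially all the difficulty lies.

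With the divergence estimate in hand I would run the \textbf{shearing argument}. Suppose there is a positive-measure set of generic $x$ admitting a nearby generic point $y=xg$ with $g\notin L$ (if no such pair exists, one is essentially done). I would flow both forward to the first time $T=T(g)$ at which $u_{-T}gu_T$ reaches unit size; along a subsequence the normalized displacements converge to some $g_\infty\in\op{C}_G(U)$ not lying in $U$. Comparing $\frac1T\int_0^T f(xu_t)\,dt$ with $\frac1T\int_0^T f(yu_t)\,dt$ after recentering by the displacement --- using that both points are generic and that their orbit segments stay close for a definite proportion of time, for which the Dani-Margulis non-divergence theorem keeps the flowed segments inside a fixed compact set with positive frequency so that $g_\infty$ is genuinely attained --- one deduces that $\nu$ is invariant under the one-parameter subgroup through $g_\infty$. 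Hence $\Sigma(\nu)$ strictly contains $U$. Iterating this (replacing $U$ by a unipotent one-parameter subgroup of the enlarged stabilizer, decomposing $\nu$ into ergodic components if necessary) the process must terminate: eventually no transverse divergence remains, $\nu$ is $L$-invariant, and $L=\Sigma(\nu)^\circ$.

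It remains to show $\supp\nu$ is a single closed $L$-orbit. The support is a closed $L$-invariant set on which $\nu$ is $U$-ergodic, hence $L$-ergodic; combining this with the non-divergence of unipotent flows and the fact that no subgroup strictly larger than $L$ preserves $\nu$, one concludes that $\supp\nu$ is exactly one closed orbit $xL$, on which $\nu$ must be the normalized $L$-Haar measure. Since $L\supseteq U$ is connected and closed, this is precisely the assertion. An alternative to the shearing step, in the spirit of Margulis-Tomanov and Einsiedler-Lindenstrauss, is to argue by entropy: one shows that the metric entropy $h_\nu(u_1)$ equals the entropy of $u_1$ with respect to the $L$-Haar measure on the relevant orbit, and then the equality case of the entropy bound (of Ledrappier-Young type) forces $\nu$ to coincide with that Haar measure.
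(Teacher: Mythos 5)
This statement is not proved in the paper at all: it is Ratner's measure classification theorem, quoted verbatim with a citation to \cite{Ra} and used as a black box in the proofs of Proposition \ref{prop.iso} and Theorem \ref{Theoremmax}. So there is no internal argument to compare your proposal against; the only question is whether your text would stand as a proof of Ratner's theorem, and it would not. You say so yourself, and indeed what you have written is an accurate map of the standard strategy --- polynomial divergence (the $H$-property), shearing to enlarge the stabilizer $\Sigma(\nu)$, Dani--Margulis non-divergence to keep orbit segments in a compact set, genericity from the ergodic theorem, and the entropy route of Margulis--Tomanov and Einsiedler--Lindenstrauss as an alternative --- but every step that carries the actual weight is asserted rather than carried out.

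Two gaps are worth naming concretely. First, the sentence ``comparing the two time averages after recentering by the displacement \ldots one deduces that $\nu$ is invariant under the one-parameter subgroup through $g_\infty$'' is precisely where the difficulty of Ratner's proof lives: one needs uniform quantitative versions of the divergence estimate, a careful choice of time windows on which the displacement is of unit size for a definite proportion of time, and a separate treatment of the case where the drift is along $U$ itself (handled by time reparametrization) or along the normalizer rather than the centralizer; your sketch does not engage with any of this, nor with how the iteration is organized so that it terminates (Ratner's induction on subgroups is delicate, and ``decomposing $\nu$ into ergodic components if necessary'' hides real issues since ergodicity for the enlarged group is not automatic). Second, the final paragraph treats the homogeneity of the support --- that $\supp\nu$ is a single \emph{closed} orbit $xL$ carrying the finite $L$-invariant measure --- as a consequence of ergodicity plus non-divergence plus maximality of $L$, but this is itself a substantial component of the theorem, not a corollary of the invariance statement; it requires its own argument (in Ratner's and in the Margulis--Tomanov treatments it occupies a significant portion of the proof). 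As a summary of the architecture your proposal is sound; as a proof it is missing essentially all of the analytic content, which is of course why the paper simply cites Ratner rather than reproving the result.
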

This fundamental result describes all ergodic invariant probability measures for one-parameter unipotent flows.
The next theorem, due to Ratner as well, asserts that for a fixed base point, only countably many homogeneous subspaces can arise as supports of such invariant measures.
\begin{thm}[Ratner {\cite[Theorem 5]{RaGF}}] \label{ragf} Let $G$ be a connected linear Lie group and $\Ga<G$ be a discrete subgroup. Let $x\in \Ga\ba G$.
Let $\cal A_x$ denote the set of all closed connected subgroups $L<G$
such that $xL$ is closed and has an $L$-invariant probability measure $\mu_L$ and there is a one-parameter unipotent subgroup $U<L$ acting ergodically on $(xL, \mu_L)$. Then $\cal A_x$ is countable.
    
\end{thm}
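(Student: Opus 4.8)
The plan is to show that each $L\in\cal A_x$ is pinned down, up to finitely many possibilities, by the discrete group $\La_L:=\op{Stab}_L(x)$, and then to observe that only countably many such $\La_L$ can occur. First fix $g_0\in G$ with $x=\Ga g_0$ and put $\Ga':=g_0^{-1}\Ga g_0$, a fixed countable discrete subgroup of $G$. For $L\in\cal A_x$ one has $\La_L=\Ga'\cap L$, and since $xL$ is closed and carries an $L$-invariant probability measure, $\La_L$ is a lattice in $L$. As a lattice in a connected Lie group is finitely generated (classical; see e.g.\ Raghunathan's book on discrete subgroups of Lie groups), $\La_L$ belongs to the countable family of finitely generated subgroups of $\Ga'$. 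It therefore suffices to bound, for each finitely generated $\La<\Ga'$, the number of $L\in\cal A_x$ with $\La_L=\La$.

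The second step, which I expect to be the substantive one, recovers $L$ from $\La$ using the ergodic one-parameter unipotent subgroup $U<L$, which constrains $L$ severely. Ergodicity of $U$ on $(xL,\mu_L)\cong(\La\ba L,\mathrm{Haar})$ forces $\La U$ to be dense in $L$; iterating via the Mautner phenomenon, one finds that $L$ is topologically generated by $\La$ together with its one-parameter unipotent subgroups, and a Borel-density-type theorem for lattices then shows that the Zariski closure $\mathbf M:=\overline{\La}^{\,Z}$ of $\La$ inside the linear group $G$ captures $L$. More precisely, $\op{Lie}(L)$ is an algebraic subalgebra of $\op{Lie}(\mathbf M)$, and it is determined up to finitely many possibilities by the algebraic structure of $\mathbf M$ (its unipotent radical and reductive Levi factors) subject to admitting $\La$ as a lattice and carrying an ergodic unipotent flow. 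Since $\mathbf M$ depends only on $\La$, and a Lie subalgebra of $\mathfrak g$ determines a unique connected Lie subgroup, this bounds the number of such $L$ by a finite quantity; summing over the countably many admissible $\La$ gives that $\cal A_x$ is countable.

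The main obstacle lies entirely in the second step, in the generality required: since $G$ is only assumed linear, $L$ need be neither semisimple nor algebraic nor free of compact or toral factors, so both the precise sense in which $\La$ is Zariski-large in $L$ and the passage from the algebraic hull $\mathbf M$ back to $L$ must be carried through using the structure theory of Lie groups together with the non-divergence and ergodicity of unipotent flows. This is exactly where Ratner's argument does its real work; by contrast, the first step---finite generation of lattices, hence only countably many possible $\La$---is elementary.
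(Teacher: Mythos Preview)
This theorem is not proved in the paper; it is quoted from Ratner \cite[Theorem~5]{RaGF} and used as an input to the proof of Theorem~\ref{Theoremmax}. There is therefore no proof in the present paper to compare your proposal against.

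Your outline does match the broad strategy of Ratner's original argument: pin down $L$ via the lattice $\Lambda_L=\Gamma'\cap L$, observe that $\Lambda_L$ is finitely generated (hence ranges over a countable set), and then recover $L$ from $\Lambda_L$ using the ergodic unipotent together with a Borel--density type statement. You also correctly flag that the substance lies in the second step. As written, however, that step is only a sketch: the assertion that ``$L$ is topologically generated by $\Lambda$ together with its one-parameter unipotent subgroups'' does not follow from ergodicity of a single $U$ alone, and the passage from the Zariski closure $\mathbf M=\overline{\Lambda}^{\,Z}$ back to a finite list of candidate Lie algebras for $L$ is asserted rather than argued. In Ratner's proof the crux is a careful Borel--density argument showing that every $\mathrm{Ad}$-unipotent one-parameter subgroup of $L$ already lies in $\mathbf M$, which then traps $\mathrm{Lie}(L)$ between two subalgebras determined by $\Lambda$ alone; this requires genuine structural work since $L$ need not be semisimple or algebraic, exactly as you anticipate.
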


To formulate the avoidance theorem of Dani–Margulis, we introduce the notion of the singular set associated to a subgroup.
\begin{Def} \label{s} For a connected closed subgroup $W<G$, define the {\it singular set} with respect to $W$:
$$
\mathscr{S}(W)=\left\{x\in\Ga\ba G:
\begin{array}{c}
     \text{there exists a closed connected subgroup $ L\subsetneq G$ s.t.}\\
     \text{$W<L$ and $xL$ admits a finite $L$-invariant measure} 
\end{array}
\right\}.
$$
\end{Def}
When an orbit $xL\subset \Ga\ba G$ admits a finite $L$-invariant measure, $xL$ is automatically a closed subset of $\Ga\ba G$ \cite[Theorem 1.13]{Rag}.

 The avoidance theorem of Dani–Margulis asserts that the orbit of a generic point under a one-parameter unipotent subgroup  spends a uniformly small proportion of time near the singular set.
This quantitative form of avoidance will later ensure that the measures obtained from orbit averages do not concentrate on lower-dimensional homogeneous subsets.
\begin{thm}[Dani-Margulis {\cite[Theorem~1]{DM}}]\label{lem.DM2}  Let $G$ be a connected linear Lie group and $\Ga<G$ be a discrete subgroup.
Let $W$ be a connected closed subgroup of $G$ which is generated by unipotent elements in it. Let $\mathcal K \subset \Ga\ba G$ be a compact subset disjoint from $\mathscr{S}(W)$.
     For any $\e>0$, there exists a neighborhood $\Omega$ of $\mathscr{S}(W)$ such that for any one-parameter unipotent subgroup $U=\{u_s:s\in \br \}$ of $G$, any $x\in \mathcal K$, and any $T>0$,
    $$
    \op{Leb}\{s\in [0,T]:xu_s\in\Omega\})\leq \epsilon T.  
    $$
    where $\op{Leb}$ denotes the Lebesgue measure of $\br$.
\end{thm}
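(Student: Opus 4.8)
The approach is the linearization method of Dani--Margulis. To every closed connected subgroup $L\subsetneq G$ with $W<L$ such that $xL$ carries a finite $L$-invariant measure for some $x$ --- equivalently $\op{Stab}_L(x)$ is a lattice in $L$ --- one associates a point $\eta_L:=v_1\wedge\dots\wedge v_{d_L}$, where $v_1,\dots,v_{d_L}$ is a basis of $\fl:=\op{Lie}(L)$ and $d_L=\dim L$, lying in the finite-dimensional $G$-module $V_L:=\bigwedge^{d_L}\fg$ with the action $\rho_L:=\bigwedge^{d_L}\Ad$. After replacing $L$ by the associated subgroup of the Dani--Margulis class (closed connected, unipotent-generated subgroups $H$ with $\op{Stab}_H(x)$ a lattice), three facts are available: (i) $\rho_L(W)\eta_L=\eta_L$, since $W<L$ preserves $\fl$ and, being connected and generated by unipotents, acts on it with determinant one; (ii) the orbit $\op{Stab}_L(x)\eta_L$ is discrete and closed in $V_L$ --- the sole non-formal ingredient, a theorem of Dani--Margulis on orbits of lattices in exterior powers (the Borel--Harish-Chandra theorem when $\Gamma$ is arithmetic); (iii) membership of a point in the $L$-stratum of $\mathscr S(W)$ is detected by the corresponding vector $\rho_L(\cdot)\eta_L$ lying on a line through a point of the discrete set $\op{Stab}_L(x)\eta_L$, so that a neighborhood of $\mathscr S(W)$ corresponds --- once finitely many strata have been isolated, see below --- to the trajectory $s\mapsto\rho_L(gu_s)\eta_L$ coming uniformly close to one of these discrete sets inside a bounded region of $V_L$.

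The single dynamical input is the polynomial nature of unipotent trajectories: for fixed $g$ the map $s\mapsto\rho_L(gu_s)\eta_L$ has coordinates that are polynomials in $s$ of degree bounded in terms of $\dim G$ alone, because $\Ad(u_s)$ has polynomial matrix entries. Hence, by the $(C,\alpha)$-good property of real polynomials of bounded degree, there are $C>0$ and $\alpha>0$ depending only on $\dim G$ such that for every interval $I\subset\br$, every $w\in V_L$, and every $\delta>0$,
$$
\op{Leb}\bigl\{s\in I:\ \norm{\rho_L(gu_s)\eta_L-w}\le\delta\cdot\sup_{t\in I}\norm{\rho_L(gu_t)\eta_L-w}\bigr\}\ \le\ C\,\delta^{\alpha}\,\op{Leb}(I),
$$
together with the companion ``escape from a ball'' estimate. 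Everything up to here is linear algebra and elementary real analysis.

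The crux is the inductive combinatorial step, the Dani--Margulis \emph{basic lemma}, carried out by induction on $\dim G-\dim W$ (the base case $W=G$ being vacuous, as then $\mathscr S(W)=\emptyset$). First, using (ii) and Ratner's countability theorem (Theorem \ref{ragf}) to reduce to a countable list of strata, one shows that a compact set $\mathcal K$ disjoint from $\mathscr S(W)$ is disjoint from a whole neighborhood of it: a sequence $x_n\in\mathscr S(W)$ converging into $\mathcal K$ would, after passing to a subsequence, be concentrated near a single discrete set $\op{Stab}(x)\eta_L$, whose closedness forces the limit point back into $\mathscr S(W)$. Fixing such a neighborhood, only finitely many pairs $(L,\gamma\eta_L)$ of a stratum and a point of its discrete set are relevant to trajectories issuing from $\mathcal K$ within a fixed bounded region. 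For each such pair one partitions $[0,T]$ into the maximal subintervals on which $s\mapsto\rho_L(gu_s)\eta_L$ stays in a fixed ball around $\gamma\eta_L$; on each the polynomial estimate above bounds the proportion of time within $\delta$ of $\gamma\eta_L$ by $C\delta^{\alpha}$, while at the endpoints the trajectory has left that ball --- which means it has either genuinely escaped a bounded set (controlled by the escape estimate) or approached $\mathscr S(W')$ for a subgroup $W'$ of strictly larger dimension, covered by the inductive hypothesis. Summing the finitely many contributions and choosing $\delta$ small in terms of $\e$, $\mathcal K$, and the combinatorial data produces the required neighborhood $\Omega$ of $\mathscr S(W)$ with $\op{Leb}\{s\in[0,T]:xu_s\in\Omega\}\le\e T$ for all $x\in\mathcal K$ and all $T>0$.

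The main obstacle is precisely this last step: organizing the induction so that every excursion away from a neighborhood of each discrete set $\op{Stab}(x)\eta_L$ is charged either to polynomial divergence or to a singular set of a larger subgroup, while keeping the number of relevant pairs $(L,\gamma\eta_L)$ finite with constants uniform over $\mathcal K$. A secondary subtlety is the discreteness statement (ii) in the stated generality, where $\Gamma$ is merely discrete: one works orbit-by-orbit with the groups $g^{-1}\Gamma g\cap L$, for which $\op{Stab}_L(x)$ being a lattice still yields the discreteness needed along a single trajectory through a point of $\mathcal K$.
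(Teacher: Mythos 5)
This statement is not proved in the paper at all: it is quoted verbatim as an external input, with the proof deferred to Dani--Margulis \cite{DM} (Theorem 1 there). So there is no internal argument to compare yours against; the relevant comparison is with the original linearization proof, and your outline does follow that strategy (linearize via $\eta_L=\bigwedge\fl$ in $\bigwedge^{\dim L}\fg$, use that polynomials of bounded degree are $(C,\alpha)$-good, and run the Dani--Margulis combinatorial avoidance lemma over the countably many strata of $\mathscr S(W)$, with Theorem \ref{ragf} giving countability). As a proof, however, what you wrote is an outline that concedes the decisive step: the inductive "basic lemma" that charges every excursion of the linearized trajectory either to polynomial escape or to a smaller stratum is exactly the content of \cite{DM}, and your sketch does not actually carry it out (and the induction there is organized around self-intersections of the sets $X(H,W)=\{g: g^{-1}Wg\subset H\}$, i.e.\ times when two distinct vectors $\gamma_1\eta_H,\gamma_2\eta_H$ are simultaneously small, rather than a clean induction on $\dim G-\dim W$ with base case $W=G$).

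Two ingredient-level points also need correction. First, nearness to the $H$-stratum is detected by the vector $\rho_H(gu_s)\gamma\eta_H$ lying close to the linear subspace $A_{H,W}$ of $\rho_H(W)$-fixed vectors while staying in a bounded region, not by "lying on a line through a point of the discrete set." Second, the non-formal discreteness input is that the full orbit $\rho_H(\Ga)\eta_H$ (equivalently $g^{-1}\Ga g\,\eta_H$) is discrete in $V_H$; the set $\op{Stab}_H(x)\eta_H$ you invoke is essentially $\{\pm\eta_H\}$ (since $\op{Stab}_H(x)\subset H$ acts on $\eta_H$ by the unimodular determinant character) and carries no information. The discreteness of $\Ga\eta_H$ is a theorem of Dani--Margulis for subgroups $H$ in their class (using that $\Ga\cap H$ is a lattice and $H$ is generated by unipotents acting ergodically); it does not follow formally from $\op{Stab}_H(x)$ being a lattice, and without it the finiteness of the relevant pairs $(H,\gamma\eta_H)$ over a compact set, which your argument relies on, is not justified. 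Since the paper uses this theorem as a black box, the appropriate course is to cite \cite{DM} rather than to reprove it; if you do want a self-contained proof, these are the two places where your sketch would have to be substantially filled in.
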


\subsection*{Orbit closures inside $\RFM$ for geometrically finite $\cM$}
We now return to the setting of a geometrically finite manifold $\cM=\Gamma\backslash X$.
Denote by $$\pi:\Ga\ba G\to \cal M$$ the projection $x\mapsto x.o$.
To apply the general results recalled above, we need a quantitative non-divergence estimate for unipotent flows: unipotent orbits spend most of its time in the thick part of $\cM$.

\begin{thm} (\cite[Corollary 5.5]{BO1}, \cite[Theorem 1.1]{BZ}) \label{lem.DM1} Let $\Ga<G$ be a discrete subgroup.
    For any $\e>0$ and a compact set $\mathcal K\subset\Ga\ba G$, there exists a constant $\eta >0$ depending on $\mathcal K$ such that for any one-parameter unipotent subgroup $U=\{u_s:s\in \br\}$, any $x\in \mathcal K$ and any $T>0$,
    $$
  \op{Leb}\{s\in[0,T]: \pi(xu_s)\in \cM_\eta \}\le \e T 
    $$ where $\cal M_\eta$ denotes the $\eta$-thin part of $\cM=\Ga\ba X$ as in Section \ref{sec.gf}.
\end{thm}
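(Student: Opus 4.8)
This is a quantitative non-divergence theorem for unipotent flows, recorded here as \cite[Corollary 5.5]{BO1} and \cite[Theorem 1.1]{BZ}; the plan is to follow the Dani--Margulis polynomial-divergence method \cite{DM}, carried out uniformly in the absence of a finite-volume fundamental domain. First I would fix $\eta_0>0$ small enough that the compact set $\pi(\mathcal K)$ lies in the $\eta_0$-thick part $\cM_{\ge\eta_0}$ (possible since $\pi(\mathcal K)$ is compact with positive injectivity radius), and reduce the problem to bounding the time the lifted trajectory makes a short loop: writing $x=\Ga g$, one has $\pi(xu_s)\in\cM_\eta$ precisely when $\sfd(g u_s.o,\ \gamma g u_s.o)<2\eta$ for some $\gamma\in\Ga-\{e\}$, equivalently when the conjugate $u_{-s}\,g^{-1}\gamma g\, u_s$ displaces the basepoint $o$ by less than $2\eta$.

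The next step is the polynomial mechanism. For each fixed $\gamma$, the matrix entries of $u_{-s}\,g^{-1}\gamma g\, u_s$ are polynomials in $s$ of degree bounded in terms of $G$ alone, since $\Ad(u_s)$ is unipotent; hence $s\mapsto \sfd(o,\ u_{-s}\,g^{-1}\gamma g\, u_s.o)$ is comparable to $\log|P_\gamma(s)|$ for a vector $P_\gamma$ of polynomials of uniformly bounded degree. Polynomials of bounded degree are uniformly $(C,\alpha)$-good, i.e.\ $\op{Leb}\{s\in I:\ |P_\gamma(s)|\le\delta\sup_I|P_\gamma|\}\le C\delta^\alpha\op{Leb}(I)$ on every interval $I$. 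The hypothesis $x\in\mathcal K$, which forces the orbit into the $\eta_0$-thick part at $s=0$, pins $\sup_I|P_\gamma|$ from below on every interval $I$ meeting the thick part, and a Dani--Margulis covering/induction argument over the relevant translates $\gamma$ at each scale then turns this into a bound of the form $\op{Leb}\{s\in[0,T]:\pi(xu_s)\in\cM_\eta\}\le C'\eta^\alpha T$, which is $\le\epsilon T$ once $\eta$ is chosen small depending on $\epsilon$ and $\mathcal K$.

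The main obstacle, and the reason this goes beyond the classical lattice statement of \cite{DM}, is making the constants $C,\alpha$ and the covering/induction argument uniform over the infinitely many translates $\gamma\in\Ga$ and over all $T>0$: there is no cofinite fundamental domain to work in, and the thin part itself may be noncompact with infinitely many components. In the geometrically finite case I would exploit the thick--thin decomposition \eqref{eq.tt} together with Bowditch's description of the cusps (Theorem \ref{lem.f2}) and the bounded-parabolic property --- finitely many cusps up to $\Ga$, with uniformly separated horoball packings and controlled shadows at infinity --- to run the argument with uniform constants; in full generality this uniformity is exactly what \cite{BZ} supplies, by proving quantitative non-divergence directly on the homogeneous space $\Ga\ba G$.
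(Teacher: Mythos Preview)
The paper does not prove this statement: Theorem~\ref{lem.DM1} is quoted verbatim from \cite[Corollary~5.5]{BO1} and \cite[Theorem~1.1]{BZ} and used as a black box, so there is no ``paper's own proof'' to compare against. Your sketch is a fair outline of the Dani--Margulis polynomial non-divergence mechanism underlying those references, and you correctly identify the main issue (uniformity over infinitely many $\gamma$ and all $T>0$ in the absence of finite covolume) and where it is resolved (the geometrically finite case via the thick--thin/horoball structure in \cite{BO1}, the general discrete case directly in \cite{BZ}).

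One caution on your sketch as written: the argument via the thick--thin decomposition \eqref{eq.tt} and Bowditch's cusp structure applies only when $\Gamma$ is geometrically finite, whereas the statement as recorded here (and as in \cite{BZ}) is for an arbitrary discrete subgroup $\Gamma<G$. In that generality there need not be finitely many cusps or a horoball packing, so your final paragraph's ``in the geometrically finite case\ldots'' branch does not cover the full statement; you would need to invoke \cite{BZ} outright, where the uniformity is obtained by working with the displacement function on $\Gamma\backslash G$ and the Margulis lemma rather than an a priori description of the thin part. For the purposes of this paper, however, only the geometrically finite case is ever used.
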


The following proposition is the key step where the geometric finiteness assumption plays a decisive role. 
It guarantees that, within the renormalized frame bundle $\RFM$,
 any limiting measure arising from time-averages of unipotent orbits remains supported inside $\RFM$.

Denote by $\cal P(\Ga\ba G\cup\{\infty\})$ the space of probability measures on the one-point compactification of $\Ga \ba G$, equipped with the weak-$^*$ topology. This is a compact metrizable space.

\begin{prop}[Non-divergence within $\RFM$] \label{ND}  Let $\Ga<G$ be a geometrically finite subgroup and $\cM=\Ga\ba X$.
  Let $\mathcal K\subset\Ga\ba G$ be a compact subset, and let  $U_i=\{u_{i,s}:s\in \br \}$ be a sequence of one-parameter unipotent subgroups of $G$. Let  $x_i\in \mathcal K$ be a sequence  such that $x_iU_i\subset \RFM$ for each $i\ge 1$.
Then for any sequence $T_i\to \infty$, any weak-$^*$ limit of the sequence
$$\nu_{T_i}:=\frac{1}{T_i}\int_0^{T_i} \delta_{x_i u_{i,s}} \, ds $$ in $\cal P(\Ga\ba G\cup\{\infty\})$, as $i\to \infty$, is a probability measure supported on $\RFM$.
\end{prop}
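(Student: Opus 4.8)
\emph{Proof strategy.} The plan is to feed the quantitative non-divergence estimate of Theorem~\ref{lem.DM1} into the geometric finiteness dichotomy of Theorem~\ref{lem.f2}, exploiting the hypothesis $x_iU_i\subset\RFM$ to confine the orbit averages to a fixed compact subset of $\core(\cM)$. Throughout, let $\pi\colon\Ga\ba G\to\cM$ be the basepoint projection, which is proper because $K$ is compact; recall from Proposition~\ref{rfm} that $\RFM$ is closed in $\Ga\ba G$ and that $\pi(\RFM)\subset\core(\cM)$.

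First I would fix $\e>0$. By geometric finiteness and Theorem~\ref{lem.f2}, the set $\cal C_\eta:=\cal M_{\ge\eta}\cap\core(\cM)$ is compact for all sufficiently small $\eta>0$, and then $\cal Q_\eta:=\pi^{-1}(\cal C_\eta)$ is a compact subset of $\Ga\ba G$ by properness of $\pi$. Applying Theorem~\ref{lem.DM1} to the compact set $\mathcal K$ and to $\e$, and replacing the resulting $\eta$ by a smaller value if necessary so that $\cal C_\eta$ is compact (this only strengthens the estimate, since it shrinks the thin part $\cal M_\eta$), I obtain $\eta>0$ such that for every $i$,
$$\op{Leb}\{s\in[0,T_i]:\pi(x_iu_{i,s})\in\cal M_\eta\}\le\e\,T_i.$$
Since $x_iu_{i,s}\in\RFM$ forces $\pi(x_iu_{i,s})\in\core(\cM)$ for all $s$, every time $s\in[0,T_i]$ with $\pi(x_iu_{i,s})\notin\cal M_\eta$ is a time with $x_iu_{i,s}\in\cal Q_\eta$; hence $\nu_{T_i}(\cal Q_\eta)\ge 1-\e$ for all $i$.

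It then remains to pass to the limit. Let $\nu$ be a weak-$^*$ limit of a subsequence of $(\nu_{T_i})$ in $\cal P(\Ga\ba G\cup\{\infty\})$. Since $\cal Q_\eta$ is compact, hence closed in the one-point compactification, the portmanteau inequality gives $\nu(\cal Q_\eta)\ge\limsup_i\nu_{T_i}(\cal Q_\eta)\ge 1-\e$; as $\e>0$ is arbitrary, $\nu(\{\infty\})=0$, so $\nu$ is a probability measure on $\Ga\ba G$. Finally, each $\nu_{T_i}$ is supported on $\RFM$, and $\RFM\cup\{\infty\}$ is closed in $\Ga\ba G\cup\{\infty\}$ because $\RFM$ is closed in $\Ga\ba G$; portmanteau again gives $\nu(\RFM\cup\{\infty\})\ge\limsup_i\nu_{T_i}(\RFM\cup\{\infty\})=1$, and combined with $\nu(\{\infty\})=0$ this yields $\nu(\RFM)=1$, as desired.

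\emph{Main obstacle.} The only substantive ingredient is the uniform quantitative non-divergence of unipotent flows in the geometrically finite rank-one setting, Theorem~\ref{lem.DM1} (Benoist--Oh, Buenger--Zheng); the rest is bookkeeping with weak-$^*$ limits in the one-point compactification. The point needing care is the interface between $\Ga\ba G$ and $\cM$: in the infinite-volume case the thick part $\cal M_{\ge\eta}$ is itself non-compact, so it is essential that the hypothesis $x_iU_i\subset\RFM$ pushes the orbit into $\core(\cM)$, which is precisely what upgrades ``$\pi(x_iu_{i,s})\notin\cal M_\eta$'' to ``$x_iu_{i,s}\in\cal Q_\eta$''.
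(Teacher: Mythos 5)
Your proof is correct and follows essentially the same route as the paper: apply the quantitative non-divergence theorem (Theorem~\ref{lem.DM1}), use $x_iU_i\subset\RFM$ to upgrade ``not in the thin part'' to ``in the compact set $\pi^{-1}(\core(\cM)\cap\cM_{\ge\eta})$'' via geometric finiteness, and conclude by portmanteau in the one-point compactification. Your extra care in shrinking $\eta$ so that $\core(\cM)\cap\cM_{\ge\eta}$ is compact, and in noting that $\RFM$ is closed, only makes explicit points the paper leaves implicit.
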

\begin{proof} Fix $\e>0$ and a compact subset $\cal K\subset \Ga\ba G$.
  Let $\eta>0$ be as in Theorem \ref{lem.DM1}.  Then
  for any $x_i\in \cal K$ such that $x_iU_i\subset \RFM$,
  we have $\pi(x_iU_i)\subset \core (\cM )$. Hence Theorem \ref{lem.DM1} implies that for all $i\ge 1$ and $T>0$,
\be\label{al}  \op{Leb}\{s\in[0,T]: \pi(x_iu_{i,s})\in \core(\cM)  -\cM_\eta \}\ge (1-\e) T .\ee 
   Since $\cM$ is geometrically finite, the set
   $\core(\cM)  -\cM_\eta$ is compact. Hence $\cal C:=\{z\in \Ga \ba G: \pi(z)\in \core(\cM)  -\cM_\eta\}$ is a compact subset of $\Ga\ba G$.
By \eqref{al}, $$\nu_{T_i}( \cal C)\ge (1-\e )\quad\text{for all $i$.}$$
Therefore, for any weak-$^*$ limit $\nu$ of the sequence $\nu_{T_i}$, we have $\nu(\cal C)\ge 1-\e$. As $\e>0$ is arbitrary, it follows that $\nu(\Ga\ba G)=1$. Since $\text{supp} (\nu_{T_i})\subset \RFM$, we conclude that $\nu$ is  supported on $\RFM$.
\end{proof}

The next proposition is a key application of the avoidance and measure-classification theorems. It shows that, apart from points in the singular set, a unipotent orbit contained in $\RFM$ is already dense in $\Ga\ba G$ which in turn forces $\Ga$ to be a lattice.

\begin{prop}\label{prop.iso} Let $\Ga<G$ be  a geometrically finite  subgroup. Let $U$ be a one-parameter unipotent subgroup of $G$. 
    Suppose $xU\subset \RFM$ for $x\in \Ga\ba G$. Then $$\text{either }\quad  x\in\mathscr{S}(U)\quad \text{ or }\quad  \overline{xU}=\Ga\ba G.$$
    In the second case, $\Ga$ is a lattice in $G$.
\end{prop}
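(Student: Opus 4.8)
The plan is to run the standard Ratner-theory argument, adapted to the geometrically finite setting using the non-divergence result of Proposition \ref{ND}. Suppose $x\notin\mathscr{S}(U)$; I want to conclude $\overline{xU}=\Ga\ba G$. First I would form the orbit averages $\nu_T=\frac1T\int_0^T\delta_{xu_s}\,ds$ and pass to a weak-$^*$ limit $\nu$ along some sequence $T_i\to\infty$ in $\cal P(\Ga\ba G\cup\{\infty\})$. By Proposition \ref{ND} (applied with $\mathcal K=\{x\}$, $x_i=x$, $U_i=U$), the limit $\nu$ is a probability measure on $\Ga\ba G$ supported on $\RFM$; in particular no mass escapes to infinity. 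The measure $\nu$ is $U$-invariant, and $\operatorname{supp}\nu\subset\overline{xU}$.

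Next I would decompose $\nu$ into $U$-ergodic components. By Ratner's theorem (Theorem \ref{ra}), $\nu$-almost every ergodic component is the $L$-invariant probability measure on a closed orbit $yL$ for some connected closed $L<G$ containing $U$ and some $y\in\Ga\ba G$. If $\nu$-a.e.\ ergodic component had its supporting group $L$ a \emph{proper} subgroup of $G$, then $\nu$-a.e.\ point would lie in the singular set $\mathscr{S}(U)$; I would then invoke the Dani--Margulis avoidance theorem (Theorem \ref{lem.DM2}) to derive a contradiction with $x\notin\mathscr{S}(U)$. Concretely: since $x\notin\mathscr{S}(U)$ and $\mathscr{S}(U)$ is (essentially) a countable union of homogeneous pieces by Theorem \ref{ragf}, one can choose a compact set $\mathcal K$ containing $x$ and disjoint from a suitable neighborhood of the relevant part of $\mathscr{S}(U)$; Theorem \ref{lem.DM2} then says $\nu_T$ gives mass at most $\e$ to that neighborhood for all $T$, hence $\nu$ does too, contradicting $\nu(\mathscr{S}(U))=1$. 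Therefore a positive-measure set of ergodic components — and hence, by ergodicity of $U$ on $\Ga\ba G$ with respect to such a component being supported on a closed $G$-orbit — must have $L=G$, which forces $\nu$ to be a $G$-invariant (Haar) probability measure on $\Ga\ba G$. Consequently $\Ga$ is a lattice, and then $\operatorname{supp}\nu=\Ga\ba G$, giving $\overline{xU}\supseteq\Ga\ba G$, i.e.\ $\overline{xU}=\Ga\ba G$.

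The main obstacle is the interface between the avoidance theorem and the singular set in the \emph{infinite-volume} (non-lattice) setting: Theorem \ref{lem.DM2} is stated for a general discrete $\Ga$, but one must make sure that the compact set $\mathcal K\ni x$ can genuinely be taken disjoint from a neighborhood $\Omega$ of $\mathscr{S}(U)$, and that $\mathscr{S}(U)$ behaves well enough (via Theorem \ref{ragf}) for the avoidance estimate to bite. One must also be careful that the escape-of-mass issue is handled \emph{only} by Proposition \ref{ND} — it is exactly there that geometric finiteness and the hypothesis $xU\subset\RFM$ enter — whereas the classification of the non-escaping part is pure Ratner/Dani--Margulis. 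A secondary point to check is that once $\nu$ is Haar, the standard argument (e.g.\ genericity of $x$ for the $U$-flow, or density of $\operatorname{supp}\nu$) really yields $\overline{xU}=\Ga\ba G$ rather than merely $\overline{xU}\supset$ some proper closed set; this follows because the Haar measure has full support and $\operatorname{supp}\nu\subset\overline{xU}$.
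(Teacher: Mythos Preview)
Your approach is correct and essentially identical to the paper's: orbit averages, non-divergence via Proposition \ref{ND}, Dani--Margulis avoidance to kill the singular set, then Ratner on the ergodic components. One tightening: drop the contradiction framing --- your Dani--Margulis step (with $\mathcal{K}=\{x\}$, which is disjoint from $\mathscr{S}(U)$ by hypothesis) already gives $\nu(\Omega)\le\e$ for every $\e>0$, hence $\nu(\mathscr{S}(U))=0$ directly, so $\nu$-a.e.\ (not merely a positive-measure set of) ergodic components have $L=G$; \emph{that} is what forces $\nu$ to be Haar, and it is exactly how the paper argues. Your concerns about invoking Theorem \ref{ragf} or about the fine structure of $\mathscr{S}(U)$ are unnecessary here, since Theorem \ref{lem.DM2} applies as a black box once $\mathcal{K}=\{x\}$ is disjoint from $\mathscr{S}(U)$ and itself produces the neighborhood $\Omega$.
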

\begin{proof}  Suppose that $x\not\in\mathscr{S}(U)$. Let $U=\{u_s:s\in \br\}$ and assume $xU\subset \RFM$.
    For $T>0$, let $$\nu_T=\frac{1}{T}\int_0^T \delta_{x u_s}\, ds$$ be the probability measure on $\Ga\ba G$. 
    Let $\nu$ be a weak-$^*$ limit of a sequence
    $\nu_i:=\nu_{T_i}$  for some $T_i\to \infty$  in the one-point compactification of $\Ga\ba G$. Since $xU\subset \RFM$,  Proposition \ref{ND} implies that
    $$\nu(\Ga\ba G)=1.$$
On the other hand, since $x\not\in\mathscr{S}(U)$, 
Theorem \ref{lem.DM2} implies that
for any $\e>0$, there exists a neighborhood $\Omega $ of
$\mathscr{S}(U)$ such that for all $i$,
$$ \nu_i(\Omega) \le \e .$$
As $\e>0$ is arbitrary, it follows that
\be\label{si} \nu(\mathscr{S}(U))=0.\ee 

Since $\nu$ is a $U$-invariant probability measure on $\Ga\ba G$, it admits
 a $U$-ergodic decomposition $$\nu=\int_{\alpha\in \Ga\ba G} \nu_\al\,d\nu(\alpha) $$ where $\nu_\al$ is a $U$-invariant ergodic probability measure (cf. \cite{EL}).
    By Ratner's Theorem \ref{ra}, for $\nu$-a.e.~$\al$, the measure $\nu_\al$ is an $L_\alpha$-invariant probability measure supported on a closed orbit $x_\al L_\al$ for some $x_\alpha\in \Ga\ba G$ and some connected closed subgroup $L_\al<G$ containing $U$.

    Let $E$ be the set of all $\al$ such that $L_\al\ne G$.
    Then for all $\al\in E$, we have $$x_\al L_\al\subset\mathscr{S}(U) .$$
 Since $$\nu(E)=\int_{\alpha\in \Ga\ba G} \nu_\al(E)\,d\nu(\alpha) \leq \nu(\mathscr{S}(U)) ,$$
   it follows from \eqref{si} that $\nu(E)=0$.
Therefore $$L_\al=G\quad\text{for $\nu$-a.e. $\al$}.$$ Hence $\nu$ is the $G$-invariant measure. Since the support of $\nu$ is contained in $\overline{xU}$, it follows that $\overline{xU}=\Ga\ba G$.
As $\nu(\Ga\ba G)=1$, this forces 
$\Ga$ to be a lattice in $G$.
\end{proof}

To complete the argument, we recall the following version of Ratner’s orbit-closure theorem for finite-volume homogeneous spaces, which will be used to analyze the structure of orbit closures arising in the previous proposition.

\begin{theorem}[Ratner {\cite[Theorem 4]{RaGF}}]\label{R}
    Let $L$ be a connected linear Lie group and
    $\Delta<L$ be a lattice. For any connected
    closed subgroup $W<L$ generated by one-parameter unipotent subgroups in it and $x\in \Delta\ba L$,
    there exists a connected closed subgroup $L_0$ containing $W$ such that
    $\overline{xW}=xL_0$ and $\op{Stab}_{L_0}(x)$ is a lattice in $L_0$. Moreover there is a one-parameter unipotent subgroup $V<W$
    which acts ergodically on $(xL_0, \mu_{L_0})$ where $\mu_{L_0}$ is the $L_0$-invariant probability measure on $xL_0$.
\end{theorem}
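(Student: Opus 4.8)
The plan is to deduce this statement --- Ratner's topological rigidity theorem \cite[Theorem~4]{RaGF} --- from the unipotent-dynamics inputs already recorded above: Ratner's measure classification theorem (Theorem~\ref{ra}), Ratner's countability theorem (Theorem~\ref{ragf}) and the avoidance theorem of Dani--Margulis (Theorem~\ref{lem.DM2}), supplemented by the classical non-divergence theorem for unipotent flows on a finite-volume homogeneous space (Dani, Dani--Margulis \cite{DM}). The overarching idea is the standard one: translate the topological question about $\overline{xW}$ into a question about $W$-invariant probability measures via ergodic averaging, classify those measures, and use avoidance of singular sets to identify the support.

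\emph{Averaging and measure classification.} First I would fix a one-parameter unipotent subgroup $U=\{u_s\}\subset W$ and, for $T>0$, form the probability measure $\nu_T=\tfrac{1}{T}\int_0^T\delta_{xu_s}\,ds$ on $\Delta\ba L$. Because $\Delta$ is a lattice, unipotent orbits do not diverge in $\Delta\ba L$, so $\{\nu_T\}_{T>0}$ is tight; any weak-$^*$ limit $\nu$ along a sequence $T_i\to\infty$ is a $U$-invariant Borel probability measure supported on $\overline{xU}\subset\overline{xW}$. Decomposing $\nu=\int\nu_\alpha\,d\nu(\alpha)$ into $U$-ergodic components and applying Theorem~\ref{ra}, for $\nu$-a.e.\ $\alpha$ the component $\nu_\alpha$ is the $H_\alpha$-invariant measure on a closed orbit $y_\alpha H_\alpha$ with $U<H_\alpha<L$ connected closed; by Theorem~\ref{ragf} the subgroups $H_\alpha$ run over a \emph{countable} family.

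\emph{Linearization, avoidance, and descent.} The core step is to upgrade this to a single homogeneous orbit. Consider the singular set $\mathscr S(W)$ of Definition~\ref{s}. If $x\notin\mathscr S(W)$: each candidate proper subgroup $H$ is detected by linearizing the $L$-action on $\bigwedge^{\dim\Lie(H)}\Lie(L)$, where the $U$-orbit becomes a polynomial curve, so Theorem~\ref{lem.DM2} bounds --- uniformly in $i$ --- the $\nu_{T_i}$-mass of a neighbourhood of $\mathscr S(W)$ by an arbitrarily small $\e>0$, whence $\nu(\mathscr S(W))=0$. This forces $H_\alpha=L$ for $\nu$-a.e.\ $\alpha$, so $\nu$ is the $L$-invariant probability measure and $\overline{xW}\supset\supp\nu=\Delta\ba L$, i.e.\ $\overline{xW}=\Delta\ba L$ and $L_0=L$. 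If $x\in\mathscr S(W)$, then $x$ lies in a proper closed orbit $xH$ with $W<H$ and $H\cap\Delta$ a lattice in $H$; replacing $(L,\Delta,x)$ by $(H,\,H\cap\Delta,\,x)$ and inducting on $\dim L$ (which strictly decreases, the base case being immediate) produces a connected closed $L_0\supset W$ with $\overline{xW}=xL_0$ and $\op{Stab}_{L_0}(x)$ a lattice in $L_0$. Two further points: the passage from a single $U$ to a general $W$ generated by finitely many one-parameter unipotent subgroups is handled by a maximality argument --- choose $U\subset W$ and a base point in $\overline{xW}$ making the dimension of the resulting homogeneous closure as large as possible, then exploit $W$-invariance of $\overline{xW}$ --- as in \cite{RaGF}; and for the last clause, a Poincar\'e-recurrence argument gives $\overline{yW}=xL_0$ for $\mu_{L_0}$-a.e.\ $y$, so $W$ acts ergodically on $(xL_0,\mu_{L_0})$, and a Mautner-type phenomenon for groups generated by unipotents yields a single one-parameter unipotent subgroup $V<W$ acting ergodically.

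\emph{Main obstacle.} The genuine difficulty is the linearization/avoidance step: bookkeeping the countably many intermediate closed orbits, encoding each inside a linear representation, extracting from the polynomial behaviour of unipotent trajectories a \emph{uniform} avoidance estimate, and then running the dimension induction cleanly. This is exactly the technical heart of the work of Ratner and of Dani--Margulis, and a self-contained treatment of it would be a major undertaking; here it enters only through the quoted Theorem~\ref{lem.DM2}, after which the reduction above is largely bookkeeping.
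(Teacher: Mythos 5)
The paper does not prove Theorem \ref{R} at all: it is quoted verbatim as Ratner's orbit-closure theorem \cite[Theorem 4]{RaGF} and used as a black box (alongside Theorems \ref{ra}, \ref{ragf} and \ref{lem.DM2}) in the proof of Theorem \ref{Theoremmax}. So there is no internal proof to compare with; what you have written is a sketch of the standard Dani--Margulis-style derivation of the topological theorem from measure classification, countability, non-divergence and avoidance. At the level of an outline, the averaging step, the ergodic decomposition, the conclusion $\nu(\mathscr S(U))=0$ when $x\notin\mathscr S(U)$, and the descent by induction on $\dim L$ when $x\in\mathscr S(U)$ are all sound, and in fact they mirror exactly how the paper itself assembles these ingredients in Proposition \ref{prop.iso} and Theorem \ref{Theoremmax} for the infinite-volume setting. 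Deferring the linearization/uniform-avoidance machinery to the quoted Theorem \ref{lem.DM2} is legitimate, since the paper does the same.

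There is, however, a genuine gap in your treatment of the ``moreover'' clause. First, ``$\overline{yW}=xL_0$ for $\mu_{L_0}$-a.e.\ $y$, hence $W$ acts ergodically'' is not a valid inference: density of almost every orbit is a topological statement and does not imply ergodicity of the invariant measure (there are minimal, volume-preserving, non-ergodic systems). Ergodicity of $W$ on $(xL_0,\mu_{L_0})$ has to come from the construction of $L_0$ (e.g.\ minimality of $L_0$ together with the ergodic decomposition under a unipotent $U<W$ and the countability theorem), not from recurrence. Second, invoking ``a Mautner-type phenomenon'' to extract a single one-parameter unipotent $V<W$ acting ergodically does not work for the theorem as stated, which allows an arbitrary connected linear $L$ and an arbitrary $W$ generated by unipotents: if $W$ is, say, a product of two simple factors, a one-parameter unipotent sitting in one factor is not ergodic even though $W$ is, and Mautner only upgrades $V$-invariance to invariance under the Mautner subgroup of $V$, which need not be all of $W$. (The paper uses Mautner in the proof of Theorem \ref{eq00} precisely because there $H_i^{\nc}$ is simple; that argument does not transfer to the general statement of Theorem \ref{R}.) This clause is genuinely part of Ratner's finer analysis in \cite{RaGF}, and your sketch does not recover it. The passage from a single $U$ to general $W$ is likewise only gestured at, but since you explicitly defer that to Ratner it is the ergodicity clause that is the concrete error.
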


With all the necessary dynamical ingredients in place—namely, the avoidance theorem, the non-divergence result within $\RFM$
 and Ratner’s orbit-closure theorem—we can now prove Theorem \ref{Theoremmax}.

\begin{proof}[Proof of Theorem  \ref{Theoremmax}] Suppose that $xW\subset \RFM$.
First, consider the case when $W$ is a one-parameter unipotent subgroup. 
By Proposition \ref{prop.iso}, either $\overline{xW}=\Ga\ba G$ and
$\Vol(\Ga\ba G)<\infty$, or $x\in \mathscr S(W)$. Assume now that $x\in \mathscr S(W)$.
Then there exists a minimal connected closed subgroup $L<G$ containing $W$ such that $xL$ is closed and carries an $L$-invariant probability measure.  We claim that $\overline{xW}=xL$.
Let $g\in G$ with $x=\Ga g$.
Then $\op{Stab}_L(x)= L\cap g^{-1}\Ga g $ is a lattice in $L$.
The map $x\ell \mapsto [e]\ell$ defines an $L$-equivariant homeomorphism
 $xL\simeq (L\cap g^{-1}\Ga g)\ba L $ for $[e]$ denotes the identity coset in $L\cap g^{-1}\Ga g\ba L$. 
 By Theorem \ref{R},
 $$\overline{[e]W} = [e]L_0$$
 for some connected closed subgroup $L_0$ containing $W$
 and $W$ acts ergodically on $([e]L_0,\mu_{L_0})$. By the minimality of $L$, it follows that
 $L=L_0$, proving that $\overline{xW}=xL$.
 
 The general case can be deduced from this following Ratner \cite{RaGF}. For the reader's convenience, we recall Ratner's argument, which is slightly simpler in the present rank-one setting. Suppose that $W$ is a connected unipotent subgroup. Let $\cal U$ denote the set of all one-parameter subgroups of $W$. 
 By the previous case, for each $V\in \cal U$, there exists a closed connected subgroup $L(V)<G$ such that $V\subset L(V)$ and
 $\overline{xV}=xL(V)$ and $V$ acts ergodically on $(xL(V), \mu_{L(V)})$.
 We then have $$W=\bigcup \{W\cap L(V): V\in \cal U\},$$
 which is a countable union
 by Theorem \ref{ragf}. By the Baire category theorem, it follows that $W\subset L(V)$
 for some $V\in \cal U$. Hence $\overline{xW}=xL(V)$, proving the claim for unipotent $W$.
 Now consider a general case.  Since $G$ has rank one,
 $W$ is either unipotent or a simple non-compact closed subgroup.
 As the former case has already been treated, we may assume that $W$ is a connected simple non-compact subgroup. Then $W$ is generated by a pair of opposite maximal unipotent subgroups in it, say, $V^+$ and $V^-$. By applying the previous case to $ x wV^+w^{-1}\subset \RFM$
 for each $w\in V^-$, and using the countability result (Theorem \ref{ragf}),
 we obtain a closed connected subgroup $L<G$ and
 a subset $S\subset V^-$ of positive Haar measure such that, for all $w\in S$, $wV^+w^{-1}\subset L$ and
 $$\overline{xwV^+w^{-1}}= xL $$
 and $\op{Stab}_x L$ is a lattice in $L$. 
The set of elements $w\in V^-$ for which the closed group generated by $V^+$ and $wV^+w^{-1}$ is a proper algebraic subgroup of $W$ is contained in a countable union of proper algebraic subsets of $V^-$.
Since $S$ has positive Haar measure in $V^-$, 
it follows that $W\subset L$, and hence
 $\overline{xW}=xL$. Since $L$ admits a lattice, it is unimodular. Hence
 the ``moreover'' part follows from Lemma \ref{pppp}.
 This completes the proof. \end{proof}
 
\begin{rmk} \rm In the proof of Theorem \ref{Theoremmax}, the geometric finiteness assumption and the rank one setting were needed only to guarantee that the measures $\nu_T$ in the proof of Proposition \ref{prop.iso} admit weak-$^*$ limits that are
probability measures in $\Ga\ba G$.  Since Theorems \ref{ra}, \ref{lem.DM2}, and \ref{R} remain valid for any connected Lie group $G$, the same argument applies verbatim and yields:
 \begin{theorem}\label{Theoremmaxfin}
Let $G$ be a connected linear Lie group and $\Ga<G$ a discrete subgroup.
Let $W<G$ be a connected closed subgroup generated by one-parameter unipotent subgroups. If $xW$ is bounded for $x\in \Ga\ba G$,
then there exists a connected  Lie subgroup $L$ containing $W$ such that
$$\overline{xW} =xL$$
and $ \op{Stab}_L(x)$ is a lattice in $L$.
\end{theorem}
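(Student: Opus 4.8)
The plan is to transpose the proof of Theorem~\ref{Theoremmax} to this more general setting; the only change is that the boundedness of $xW$ now plays the role that the geometric-finiteness hypothesis — via the non-divergence Proposition~\ref{ND} — played there, namely it prevents the orbit averages from losing mass. The three foundational inputs (Ratner's measure classification, Theorem~\ref{ra}; the Dani--Margulis avoidance theorem, Theorem~\ref{lem.DM2}; and Ratner's finite-volume orbit-closure theorem, Theorem~\ref{R}) are already stated for an arbitrary connected linear Lie group $G$ and discrete subgroup $\Ga$, so nothing else needs to be re-proved; the countability Theorem~\ref{ragf} is likewise general.

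First I would treat the case $W=U=\{u_s\}$ of a single one-parameter unipotent subgroup. Since $xU\subset xW$ is bounded, $\overline{xU}$ is compact, so each average $\nu_T=\tfrac1T\int_0^T\delta_{xu_s}\,ds$ is a probability measure supported in a fixed compact set, and therefore so is every weak-$^*$ limit $\nu$ in the one-point compactification of $\Ga\ba G$ — this is the substitute for Proposition~\ref{ND}. I would then argue as in Proposition~\ref{prop.iso}: if $x\notin\mathscr S(U)$, then Theorem~\ref{lem.DM2} (applied with the compact set $\{x\}$, which is disjoint from $\mathscr S(U)$) gives $\nu(\mathscr S(U))=0$; decomposing $\nu$ into $U$-ergodic components and applying Theorem~\ref{ra} forces $\nu$-a.e.\ component, and hence $\nu$ itself, to be the $G$-invariant probability measure, so $\Ga$ is a lattice, $\supp\nu=\Ga\ba G\subset\overline{xU}$, and — using boundedness — $\Ga\ba G$ is compact and $\overline{xU}=\Ga\ba G$, so $L=G$ works. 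If instead $x\in\mathscr S(U)$, pick a proper connected closed subgroup $L\subsetneq G$ with $U<L$ such that $xL$ is closed and carries a finite $L$-invariant measure; writing $x=\Ga g$, the stabilizer $\op{Stab}_L(x)=L\cap g^{-1}\Ga g$ is then a lattice in $L$, so applying Theorem~\ref{R} to the finite-volume homogeneous space $xL\cong(L\cap g^{-1}\Ga g)\ba L$ produces a connected closed $L_0$ with $U<L_0\le L$, $\overline{xU}=xL_0$, $\op{Stab}_{L_0}(x)$ a lattice in $L_0$, and $U$ acting ergodically on $(xL_0,\mu_{L_0})$; in particular $L_0\in\cal A_x$.

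Next I would bootstrap to general $W$ by Ratner's arguments. For a connected unipotent $W$: every element of $W$ lies on a one-parameter subgroup $V\subset W$, and each such $xV$ is bounded, so the previous step gives $L(V)\in\cal A_x$ with $\overline{xV}=xL(V)$; hence $W=\bigcup_V\bigl(W\cap L(V)\bigr)$ is a countable union of closed subgroups by the countability of $\cal A_x$ (Theorem~\ref{ragf}), and Baire's theorem yields some $V$ with $W\subset L(V)$, so $xW\subset xL(V)$ and $\overline{xW}=xL(V)$ with $\op{Stab}_{L(V)}(x)$ a lattice. For general $W$ generated by one-parameter unipotent subgroups, I would follow the reduction in \cite{RaGF}: present $W$ as generated by suitable conjugates of a fixed unipotent subgroup $V^+$ (for instance $V^+$ and an opposite $V^-$ when the reductive quotient of $W$ is semisimple without compact factors), apply the unipotent case to the bounded orbits $x\,(wV^+w^{-1})$ as $w$ ranges over the parametrizing variety, use the countability of $\cal A_x$ to extract a single $L$ valid for a positive-measure set $S$ of such $w$, and use that $S$ cannot lie in a countable union of proper subvarieties to choose $w\in S$ for which the relevant conjugates generate $W$; this forces $W\subset L$ and hence $\overline{xW}=xL$ with $\op{Stab}_L(x)$ a lattice, completing the proof.

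The step I expect to be most delicate is the last reduction: one must check that the assignment $w\mapsto L(w)$ is measurable, that the $w$ for which the chosen conjugates fail to generate $W$ form a null set, and that the identity $\overline{x\,(wV^+w^{-1})}=xL$ for a single good $w$ genuinely upgrades to $\overline{xW}=xL$. These are exactly the points handled in \cite{RaGF}, and they transfer without change, since none of the ingredients used there is specific to the rank-one situation; the one place where Theorem~\ref{Theoremmax} invoked geometric finiteness — the non-escape of mass of the averages $\nu_T$ — is here immediate from the boundedness of $xW$, so there is no genuinely new obstacle.
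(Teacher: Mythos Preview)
Your proposal is correct and follows essentially the same approach as the paper: the paper's own proof of Theorem~\ref{Theoremmaxfin} is simply the remark that the proof of Theorem~\ref{Theoremmax} carries over once one observes that boundedness of $xW$ replaces Proposition~\ref{ND} in guaranteeing that weak-$^*$ limits of the $\nu_T$ are probability measures on $\Ga\ba G$, the remaining inputs (Theorems~\ref{ra}, \ref{ragf}, \ref{lem.DM2}, \ref{R}) already being stated for general connected linear $G$. Your only addition is to be more explicit about the reduction from general $W$ to the one-parameter case via \cite{RaGF}, which is appropriate since the paper's written argument for that step invokes the rank-one dichotomy and so does not literally transfer; but this is exactly what the paper intends by ``the same argument applies verbatim''.
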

\end{rmk}

\section{Equidistribution in $\RFM$}
In this section, we establish an equidistribution theorem for a sequence maximal  $H_i$-orbits, with $H_i\in \mathscr H$, contained in the renormalized frame bundle $\RFM$. Combined with the orbit-closure classification from Section 5, this result provides the key dynamical ingredient in the proof of the  finiteness theorem (Theorem \ref{fin1}) in Section 7.

We will use the following equidistribution theorem of Mozes-Shah: 
\begin{thm}[Mozes-Shah {\cite[Theorem 1.1]{MS}}] \label{prop.MS1} Let $G$ be a connected Lie group and  $\Ga<G$ be a discrete subgroup. Let $U_i$ be a sequence of one-parameter unipotent subgroups of $G$.
 Assume that there exists an infinite sequence of $U_i$-invariant ergodic probability measures $\nu_i$ on $\Ga\ba G$ converging to a probability measure $\nu$ on $\Ga\ba G$ as $i\to\infty$ in the weak-$^*$ topology. Let $x\in \supp \nu$.
Then the following holds:
\begin{enumerate}
    \item $\supp \nu =xL$ where $L=\{g\in G: \nu. g=\nu\}$\footnote{$\nu .g (E)=\nu (Eg)$ for a Borel subset $E\subset \Ga\ba G$};
\item  Let  $g_i'\to e$ be a sequence
in $G$ as $i\to\infty$  such that for all $i\in \mathbb N$,
$xg_i'\in \supp \nu_i$ and $\{xg_i' U_i\}$ is uniformly distributed\footnote{We say that $xU$ is uniformly distributed with respect to $\nu$ if the sequence of measures $\frac{1}{T}\int_0^T \delta_{xu_s} ds$ converges to $\nu$ as $T\to \infty$} with respect to $\nu_i$.
Then there exists $i_0\ge 1$ such that  
$$\supp \nu_i\subset (\supp\nu).g_i'\quad\text{for all $i\geq i_0$.}$$
    \item $\nu$ is invariant and ergodic for the action of the subgroup generated by $\{g_i'U_ig_i'^{-1}:i\geq i_0\}$.
\end{enumerate}
\end{thm}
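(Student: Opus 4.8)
The plan is to follow the skeleton of Mozes--Shah's original argument, which rests on Ratner's measure classification together with the Dani--Margulis linearization technique. First I would reduce to the case where each $\nu_i$ is already homogeneous: by Ratner's theorem (Theorem~\ref{ra}) there is a closed subgroup $H_i\supseteq U_i$ and a point so that $\nu_i$ is the $H_i$-invariant probability measure on a closed orbit $x_iH_i$. So the real content is that a weak-$^*$ limit $\nu$ of such measures, assumed to remain a probability measure (no escape of mass), is again homogeneous, together with the quantitative consequences (2) and (3). Passing to a subsequence, I would normalize the one-parameter generators of the $U_i$ so that they converge to a one-parameter unipotent subgroup $U<G$ (rescaling the time parameter when the $U_i$ ``shrink''); a direct computation with the defining integrals then shows that $\nu$ is $U$-invariant, so $\nu$ is a $U$-invariant probability measure and, by Ratner again, each of its $U$-ergodic components is homogeneous.

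The heart of the proof is to show that $\supp \nu=xL$ for every $x\in\supp \nu$, where $L=\{g\in G:\nu.g=\nu\}$; here I would run the linearization machinery. Let $\mathcal H$ be the countable family of connected closed subgroups (Theorem~\ref{ragf}) that can carry a unipotent-invariant probability measure through a fixed basepoint; for $H\in\mathcal H$ with $d=\dim H$, form the representation $\bigwedge^{d}\mathfrak g$ and the vector $p_H$ spanning $\bigwedge^{d}\Lie(H)$, so that the condition ``orbit closure of type $\subseteq H$'' is cut out by a $G$-orbit inside $\bigwedge^{d}\mathfrak g$. The Dani--Margulis avoidance estimate (Theorem~\ref{lem.DM2}) bounds, uniformly in $i$, the proportion of time a unipotent trajectory spends near the corresponding singular set, while the $(C,\alpha)$-good property of the polynomial maps $s\mapsto x_iu_{i,s}\cdot p_H$ shows that a trajectory not trapped in the relevant subvariety spends almost all of its time far from it. Feeding the measures $\nu_i$ through this dichotomy and passing to the limit, $\nu$ cannot assign positive mass to a lower-dimensional singular set unless $\nu$ is itself homogeneous of that smaller type; an induction on dimension then forces $\nu$ to be the $L$-invariant measure on a single closed orbit $xL$, which is (1).

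For (2), fix $x\in\supp \nu$. Since $\nu_i\to\nu$ and each $\supp \nu_i$ is a closed orbit, local transversality of orbits produces $g_i'\to e$ with $xg_i'\in\supp \nu_i$, and adjusting $g_i'$ inside its $U_i$-orbit (using that generic points equidistribute) I may assume $xg_i'U_i$ is uniformly distributed with respect to $\nu_i$. If $\supp \nu_i\not\subset(\supp \nu)g_i'$ for infinitely many $i$, then $g_i'H_ig_i'^{-1}$ is not contained in $L$; applying the linearization to $L$ itself, the polynomial trajectory $s\mapsto xg_i'u_{i,s}g_i'^{-1}\cdot p_L$ leaves a fixed neighborhood of $\br_{>0}\,p_L$ for a definite proportion of time, so the uniformly distributed measure $\nu_i$ assigns mass bounded below away from $(\supp \nu)g_i'$, contradicting $\nu_i\to\nu$ together with $g_i'\to e$. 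Hence $\supp \nu_i\subset(\supp \nu)g_i'$ for all $i\ge i_0$. Part (3) is then essentially formal: the containment forces $g_i'U_ig_i'^{-1}\subset L$ for $i\ge i_0$, so $\nu$, being the $L$-invariant measure on $xL$, is invariant under the subgroup they generate, and ergodicity follows because this subgroup contains a conjugate of the limiting unipotent subgroup $U$, which acts ergodically on $(xL,\nu)$ by Ratner's theorem.

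The main obstacle is the middle step: establishing homogeneity of the limit and ruling out concentration on lower-dimensional singular orbits. This is where the full strength of the Dani--Margulis linearization is needed, since one must control simultaneously the avoidance of the singular sets $\mathscr S(U_i)$ uniformly in $i$, the possible degeneration of the unipotent subgroups $U_i$, and the bookkeeping over the countably many candidate subgroups in $\mathcal H$, all under the sole hypothesis of no escape of mass. The remaining ingredients --- the reductions via Ratner, the transversality argument producing the $g_i'$, and the final invariance and ergodicity bootstrapping --- are comparatively routine.
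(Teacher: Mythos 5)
This statement is not proved in the paper at all: it is quoted verbatim as an external input, with the proof deferred to Mozes--Shah \cite[Theorem 1.1]{MS}. So there is no internal argument to compare against; what can be assessed is whether your sketch is a faithful reconstruction of the cited proof. In outline it is: reducing each $\nu_i$ to a homogeneous measure via Ratner, renormalizing the generators of $U_i$ to obtain a limiting unipotent $U$ leaving $\nu$ invariant, and then using the Dani--Margulis linearization together with the countability of candidate subgroups to show the limit is homogeneous and to force $\supp\nu_i\subset(\supp\nu)g_i'$ is exactly the Mozes--Shah strategy. (One small reading slip: in part (2) the $g_i'$ are part of the hypothesis of the theorem, not something you need to produce by a transversality argument.)

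There is, however, a genuine gap in your treatment of (3). You deduce $g_i'U_ig_i'^{-1}\subset L$ from the support containment (which is fine, via connectedness and the countable disjoint cosets $\gamma L$), but you then claim ergodicity of $\nu$ under the generated subgroup ``because this subgroup contains a conjugate of the limiting unipotent subgroup $U$, which acts ergodically on $(xL,\nu)$ by Ratner's theorem.'' Ratner's measure classification makes no such assertion: it classifies $U$-ergodic measures, but it does not say that the specific limiting $U<L$ acts ergodically with respect to the $L$-homogeneous measure on $xL$, and in general it need not (for instance $U$ may generate a proper closed subgroup of $L$ whose orbit closures in $xL$ are proper homogeneous subsets, so that the $U$-ergodic decomposition of $\nu$ is nontrivial). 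The ergodicity of $\nu$ under the group generated by $\{g_i'U_ig_i'^{-1}\}$ is one of the genuinely nontrivial conclusions of Mozes--Shah; in their argument it is extracted from the same linearization machinery, by showing that a nontrivial ergodic decomposition would force the generic points $xg_i'$ (which equidistribute toward $\nu_i\to\nu$) to spend a definite proportion of time near the singular set of a proper subgroup, which the avoidance estimates rule out. As written, your final step substitutes an unjustified (and in general false) ergodicity claim for this part of the proof.
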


The Mozes–Shah theorem provides the fundamental tool for analyzing weak–$^*$ limits of invariant probability measures arising from unipotent orbits. To apply it effectively to our setting, we first record a structural lemma describing 
the closure of an $H^{\mathrm{nc}}$-orbit inside a closed $H$-orbit.
\begin{lem}\label{lem.PE2} Let $\Ga<G$ be a discrete subgroup.
    Suppose that $xH$ is closed for some $x\in\Ga\ba G$ and $H\in\mathscr H$.
    Then there exists a connected closed subgroup $F<G$ such that $H^{\op{nc}}< F < H$ and $$\overline{xH^{\op{nc}}}=xF. $$
\end{lem}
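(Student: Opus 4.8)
The idea is to apply Theorem \ref{Theoremmaxfin} to the orbit of $H^{\op{nc}}$ inside the closed, finite-volume homogeneous space $xH$. First I would pass to the homogeneous space $xH \simeq \Delta\ba H$, where $\Delta := \op{Stab}_H(x) = H\cap g^{-1}\Ga g$ for some $g\in G$ with $x=\Ga g$; since $xH$ is closed, $\Delta$ is a lattice in $H$ (this is where we use $H\in\mathscr H$, so that $H$ is equal to the identity component of its normalizer, hence unimodular, and a closed orbit of a unimodular subgroup in $\Ga\ba G$ supports a finite invariant measure). Now $H^{\op{nc}}$ is a connected simple non-compact subgroup of $H$, in particular it is generated by one-parameter unipotent subgroups (a pair of opposite maximal unipotent subgroups $V^+,V^-$). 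Applying Theorem \ref{Theoremmaxfin} with the ambient group $H$ in place of $G$, and noting that the $H^{\op{nc}}$-orbit of $x$ in $\Delta\ba H$ is automatically bounded only if...

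\textbf{Correction.} Actually boundedness is not automatic, so instead I would invoke Ratner's orbit-closure theorem directly in the form of Theorem \ref{R}: applied to the lattice $\Delta<H$, the connected subgroup $W:=H^{\op{nc}}$ generated by one-parameter unipotent subgroups, and the point $x\in \Delta\ba H$, it produces a connected closed subgroup $F$ with $H^{\op{nc}}\subset F\subset H$ such that $\overline{xH^{\op{nc}}}=xF$ (the closure taken inside $xH$, equivalently inside $\Ga\ba G$ since $xH$ is closed) and $\op{Stab}_F(x)$ is a lattice in $F$. This gives the displayed equality $\overline{xH^{\op{nc}}}=xF$ with $H^{\op{nc}}<F<H$ directly.

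\textbf{Remaining point: connectedness and the strict inequalities.} The inclusions $H^{\op{nc}}\le F\le H$ are immediate from the statement of Theorem \ref{R} ($W<F<L$ with $L=H$ here). The subgroup $F$ is connected by construction in Theorem \ref{R}. One small thing to check is that the closure of $xH^{\op{nc}}$ computed inside $xH$ coincides with its closure in $\Ga\ba G$: this holds because $xH$ is closed in $\Ga\ba G$ by hypothesis. Thus no genuine obstacle arises; the lemma is essentially a packaging of Theorem \ref{R} once one observes that $xH$ carries a finite $H$-invariant measure (so that $\op{Stab}_H(x)$ is a lattice in $H$) — which in turn follows from $H\in\mathscr H$ being unimodular together with the assumption that $xH$ is closed, via \cite[Theorem 1.13]{Rag}.

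\textbf{Main obstacle.} The only subtlety, and the step I would be most careful about, is verifying that a closed $H$-orbit with $H\in\mathscr H$ indeed supports a finite $H$-invariant measure: this needs $H$ unimodular, which we get since $H=\op{N}_G(H^{\op{nc}})^\circ$ is reductive (an almost direct product of the semisimple group $H^{\op{nc}}$ with the compact group $\op{C}_G(H^{\op{nc}})$, using the structure established in Section \ref{sec.b}), hence unimodular. Once this is in hand, Theorem \ref{R} applies verbatim and the proof concludes.
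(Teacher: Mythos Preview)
Your approach has a genuine gap: you claim that closedness of $xH$ together with unimodularity of $H$ forces $\op{Stab}_H(x)$ to be a lattice in $H$, invoking \cite[Theorem 1.13]{Rag}. But that theorem goes the other way: an orbit supporting a finite invariant measure is closed, not conversely. The converse is simply false here --- take $\Gamma=\{e\}$, so that $xH\cong H$ is certainly closed in $G$, yet $\op{Stab}_H(x)=\{e\}$ is not a lattice in the non-compact group $H$. Since the lemma as stated assumes only that $xH$ is closed (no finite-volume hypothesis, and $\Gamma$ is an arbitrary discrete subgroup), you are not entitled to apply Theorem~\ref{R}, which requires a lattice.

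The paper's proof avoids this entirely by an elementary argument that uses no measure rigidity at all. It exploits the almost-direct-product structure $H=H^{\op{nc}}\cdot C$ with $C=\op{C}_G(H^{\op{nc}})^\circ$ compact, passes to the honest direct product $S=H^{\op{nc}}\times C$ via the finite-kernel surjection $j:S\to H$, and computes directly that
\[
\overline{\Gamma_0\,(H^{\op{nc}}\times\{e\})}=H^{\op{nc}}\times\overline{\pi_2(\Gamma_0)},
\]
where $\Gamma_0=j^{-1}(\Gamma\cap H)$ and $\pi_2$ is projection to the compact factor. (This identity is immediate: since the two factors commute, $\Gamma_0\,(H^{\op{nc}}\times\{e\})=H^{\op{nc}}\times\pi_2(\Gamma_0)$.) Setting $F=j\bigl(H^{\op{nc}}\times\overline{\pi_2(\Gamma_0)}\bigr)$ gives $H^{\op{nc}}\le F\le H$, and the closedness hypothesis on $xH$ is used only to ensure (via Lemma~\ref{OS}) that the natural map $\Gamma_0\ba S\to\Gamma\ba G$ is proper, so that the closure computed in $S$ transfers to $\Gamma\ba G$. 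No lattice assumption on $\Gamma\cap H$ is needed anywhere.
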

\begin{proof}     Choose $g\in G$ so that $x=[g]$. By replacing $ \Ga$ by $g\Ga g^{-1}$, we may assume that $g=e$.
Note that $H=\op{N}_G(H^{\op{nc}})^\circ$ is an almost direct product $H^{\op{nc}} \cdot\op{C}_G(H^{\op{nc}})^\circ$.
    Let $S:=H^{\op{nc}}\times \op{C}_G(H^{\op{nc}})^\circ$ be the corresponding direct product.
The multiplication map $j: S\to H$ is surjective with finite kernel.
    Let $\pi_1$ and $\pi_2$ denote the projections from $S$ to $H^{\op{nc}}$ and $\op{C}_G(H^{\op{nc}})^\circ$, respectively.
Set $\Ga_0:=j^{-1}(  \Ga \cap H)<S$. A direct computation shows that
    \begin{equation}\label{eq.nc}
    \overline{\Ga_0(e,e) (H^{\op{nc}}\times\{e\})}=H^{\op{nc}}\times \overline{\pi_2(\Ga_0)}.    
    \end{equation}
   Define $
    F:=j(H^{\op{nc}}\times \overline{\pi_2(\Ga_0)}).$
    Then clearly $H^{\op{nc}}\subset F\subset H$.
    Let $\iota:  \Ga_0 \cap S\ba S\to \Ga\ba G$ be the map induced by  $ s\mapsto j(s)$ for $s\in S$. This map is clearly injective.
    Since $[e]H$ is closed, it follows from Lemma \ref{OS} that $\iota$ is   proper. Hence its image of the closed set $[e](H^{\nc} \times \overline{\pi_2(\Ga_0)}) $ is closed in $\Ga\ba G$. This implies that $\overline{[e]H^{\nc}}=[e]F$, completing the proof.
\end{proof}

We are now ready to combine the preceding results with the Mozes–Shah theorem to obtain the desired equidistribution statement:
\begin{thm}\label{eq00}
    Let $\Ga<G$ be a geometrically finite subgroup.
    If there exists infinitely many maximal $x_iH_i\subset \RFM$
    with $x_i\in \Ga\ba G$ and $H_i \in \mathscr H$,
    then $\Ga$ is a lattice. Moreover, $\overline{x_iH_i^{\nc}}=x_i F_i$
    where $H_i^{\nc}<F_i<H_i$ and
  the $F_i$-invariant probability measure $\mu_{F_i}$ converges to the $G$-invariant probability measure on $\Ga\ba G$ as $i\to \infty$.
\end{thm}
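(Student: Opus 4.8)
\emph{Strategy.} The proof rests on three facts established above: the orbit–closure theorem inside $\RFM$ (Theorem \ref{Theoremmax}), the non-divergence statement within $\RFM$ (Proposition \ref{ND}), and the Mozes--Shah theorem (Theorem \ref{prop.MS1}). The plan is to attach to each $\cal N_i$ a closed orbit carrying a finite invariant measure, then feed the resulting sequence of measures into Mozes--Shah, and use the hypothesis of infinitely many \emph{distinct} $\cal N_i$ to force the limit measure to be the Haar probability measure on $\Ga\ba G$; ``$\Ga$ is a lattice'' then comes out for free.

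\emph{Step 1: closed orbits refining $\cal N_i$.} Fix $i$. Since $x_iH_i\subset\RFM$, Lemma \ref{vol} gives $x_iH_i^{\nc}\subset\RFM$, and $H_i^{\nc}$, being a connected simple non-compact rank-one subgroup, is generated by one-parameter unipotent subgroups. Theorem \ref{Theoremmax} produces a connected reductive subgroup $L_i\supseteq H_i^{\nc}$ with $\overline{x_iH_i^{\nc}}=x_iL_i$, with $\op{Stab}_{L_i}(x_i)$ a lattice in $L_i$, and (from its proof, via Theorem \ref{R}) a one-parameter unipotent subgroup $V_i\subseteq H_i^{\nc}$ acting ergodically on $(x_iL_i,\mu_{L_i})$, where $\mu_{L_i}$ is the $L_i$-invariant probability measure; the ``compact extension of a unipotent group'' alternative of Lemma \ref{pppp} is excluded because such a group cannot contain a simple non-compact subgroup. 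By Lemma \ref{lem.e1}, $L_i$ is $\Theta$-invariant, hence $L_i^{\nc}$ is a non-trivial simple non-compact $\Theta$-invariant subgroup with $H_i^{\nc}\subseteq L_i^{\nc}$, and $x_iL_i.o=x_iL_i^{\nc}.o$ is a totally geodesic submanifold which, since $x_iL_i\subseteq\RFM$, lies in $\core(\cal M)$ by Lemma \ref{vol} and contains $\cal N_i$. By maximality of $\cal N_i$, either $x_iL_i^{\nc}.o=\cal N_i$ or $x_iL_i^{\nc}.o=\cal M$. In the first case a dimension count (a proper totally geodesic subspace of $X$ has strictly smaller dimension) gives $L_i^{\nc}=H_i^{\nc}$, whence $L_i\subseteq\op{N}_G(H_i^{\nc})^\circ=H_i$; setting $F_i:=L_i$ we obtain $H_i^{\nc}\subseteq F_i\subseteq H_i$ and the closed orbit $\overline{x_iH_i^{\nc}}=x_iF_i$ with finite invariant measure $\mu_{F_i}$, consistent with Lemma \ref{lem.PE2}. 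In the second case the same count forces $L_i^{\nc}=G$, hence $L_i=G$, so $\op{Stab}_G(x_i)$ is a lattice, $\Ga$ is a lattice and $\core(\cal M)=\cal M$; the first assertion then already holds, and we may assume henceforth that the first case occurs for all $i$.

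\emph{Step 2: uniform compactness and non-escape of mass.} Each $\cal N_i$, being a rank-one symmetric space of dimension at least two, contains a complete geodesic; by Lemma \ref{complete} no complete geodesic lies inside a single cuspidal horoball, so, using the disjoint decomposition \eqref{eq.tt}, $\cal N_i$ meets $\core(\cal M)\cap\cal M_{\geq\e}$, which is compact by geometric finiteness (Theorem \ref{lem.f2}). Lifting such an intersection point and using that the $V_i$-generic points of $(x_iF_i,\mu_{F_i})$ are dense in $\supp\mu_{F_i}=x_iF_i\subseteq\pi^{-1}(\core(\cal M))$, we may re-choose the representative $x_i$ so that $x_i$ is $V_i$-generic and lies in a fixed compact set $\cal C\subset\Ga\ba G$. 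Now pass to a subsequence with $\mu_{F_i}\to\nu$ weak-$^*$ in $\cal P(\Ga\ba G\cup\{\infty\})$, and for each $i$ choose $T_i\to\infty$ so that $\tfrac1{T_i}\int_0^{T_i}\delta_{x_iv_{i,s}}\,ds$ is within $1/i$ of $\mu_{F_i}$; since $x_iV_i\subseteq\RFM$ and $x_i\in\cal C$, Proposition \ref{ND} shows $\nu$ is a \emph{probability} measure, supported on $\RFM$. This is the step where geometric finiteness is indispensable.

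\emph{Step 3: Mozes--Shah and conclusion, and the main obstacle.} Theorem \ref{prop.MS1} now applies to the $V_i$-invariant ergodic probability measures $\mu_{F_i}$ converging to the probability measure $\nu$: writing $\supp\nu=x_0L$ with $L=\{g:\nu.g=\nu\}$, there exist $g_i'\to e$ with $\supp\mu_{F_i}=x_iF_i\subseteq(x_0L)g_i'$ for all large $i$, and $\nu$ is invariant under the closed group generated by $\{g_i'V_ig_i'^{-1}\}$. The crux is to show $L=G$. If not, then for large $i$ the closed orbit $x_iF_i$ of the reductive group $F_i$ (with lattice stabilizer and $F_i^{\nc}=H_i^{\nc}$ of bounded dimension) lies in the arbitrarily small perturbation $(x_0L)g_i'$ of the proper closed orbit $x_0L$; since $G$ admits only finitely many conjugacy classes of simple non-compact subgroups (equivalently, $\mathscr H^*$ is finite up to conjugacy), infinitely many pairwise distinct such orbits cannot be confined to perturbations shrinking to $x_0L$ — concretely, $\mu_{F_i}$ would converge, inside the fixed finite-volume space $x_0L$, to its Haar measure, forcing the distinct orbits $x_iF_i$ to exhaust $x_0L$, which is incompatible with $L^{\nc}$ being a proper subgroup and with the $\cal N_i$ being infinitely many distinct maximal totally geodesic submanifolds. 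Hence $L=G$, so $\nu$ is the $G$-invariant probability measure; this forces $\Ga$ to be a lattice and $\mu_{F_i}\to\mu_G$ along the subsequence. Since every weak-$^*$ limit of $(\mu_{F_i})$ is the probability measure $\mu_G$, the full sequence converges to $\mu_G$, which completes the proof. I expect the passage from ``infinitely many distinct maximal submanifolds'' to $L=G$ to be the main difficulty, with the non-escape of mass of Step 2 (where cuspidal horoballs contain no complete geodesic and geometric finiteness enters) the other essential point.
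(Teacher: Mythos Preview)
Your Steps 1 and 2 are essentially correct and follow the paper's approach closely. The gap is in Step 3, where you correctly identify the crux (``the passage from `infinitely many distinct maximal submanifolds' to $L=G$'') but do not actually carry it out. Your proposed contradiction --- finitely many conjugacy classes in $\mathscr H^*$, orbits ``exhausting'' $x_0L$, incompatibility with $L^{\nc}$ proper --- is not a valid argument: a proper closed finite-volume orbit $x_0L$ can perfectly well contain infinitely many distinct closed $F_i$-orbits (this already happens for arithmetic lattices), and convergence of $\mu_{F_i}$ to the Haar measure on $x_0L$ does not by itself contradict anything.

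The paper's argument at this point is precise and uses maximality a \emph{second} time. From $\supp\mu_i\subset (x_0L)g_i'$ one extracts $q_iF_iq_i^{-1}\subset p_0L p_0^{-1}$ for suitable $q_i\to p_0$. One then shows $L$ is reductive (Lemma \ref{pppp}), that $L_i:=q_i^{-1}p_0L^\circ p_0^{-1}q_i$ is $\Theta$-invariant (Lemma \ref{lem.e1}, since $F_i\subset L_i$), and hence that $[q_i]L_i.o$ is a totally geodesic submanifold of finite volume, therefore contained in $\core(\cal M)$ by Lemma \ref{vol}. Now maximality of $\cal N_i=x_iH_i.o$ forces either $L_i=G$ or $[q_i]L_i.o=[q_i]F_i.o$; in the second case $L_i^{\nc}=H_i^{\nc}$ and $L_i\subset H_i$, so $q_iH_i^{\nc}q_i^{-1}\subset p_0L^\circ p_0^{-1}\subset q_iH_iq_i^{-1}$, and taking non-compact parts shows all $q_iH_i^{\nc}q_i^{-1}$ coincide, hence all $q_iH_iq_i^{-1}=\op{N}_G(q_iH_i^{\nc}q_i^{-1})^\circ$ coincide. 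Since $q_i=\ga_ig_iu_i$ with $u_i\in H_i$, this means $\ga_ig_iH_ig_i^{-1}\ga_i^{-1}$ is independent of $i$, contradicting the distinctness of the $x_iH_i.o$ via Lemma \ref{gy}. This second invocation of maximality, which converts the Mozes--Shah inclusion into a rigid algebraic constraint pinning down the $H_i$, is the missing idea in your Step 3.
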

\begin{proof}
Since $H_i^{\nc}$ is generated by unipotent one-parameter subgroups,
 Theorem \ref{Theoremmax} implies that
$\overline{x_iH_i^{\nc}}=x_i F_i$ where $H_i^{\nc}<F_i$ and $\op{Stab}_{F_i}(x_i)$ is
a lattice in $F_i$. Since $F_i\supset H_i^{\nc}$, it cannot be a compact extension of a connected unipotent subgroup. Therefore, by Lemma \ref{pppp}, $F_i$ is reductive.
By Lemma \ref{lem.e1}, $F_i$ is $\Theta$-invariant, and hence $F_i^{\nc}\in \mathscr H^*$.
Since $x_iF_i\subset \RFM$, Lemma \ref{lem.q} implies that $x_i \op{N}_G(F_i^{\nc})\subset \RFM$ and $\op{N}_G(F_i^{\nc})\in \mathscr H$.
By maximality, we must have $\op{N}_G(F_i^{\nc})=H_i$, and
therefore $$F_i <H_i.$$
 Let $\mu_i=\mu_{F_i}$, and  let $$U_i=\{u_{i, s}: s\in \br\} < H_i$$ be
a one-parameter unipotent subgroup.
  By \cite[Proposition 2.1]{MS}, the subgroup $H_i^{\op{nc}}$ acts ergodically on
    $xF_i$ with respect to $\mu_i$.   
    Since $H_{i}^{nc}$ is simple in our setting, any unipotent subgroup of $H_i^{\nc}$ acts ergodically on $(xF_i,\mu_i)$ by the Mautner phenomenon.
Hence $U_i$ acts ergodically on $(x_iF_i, \mu_i)$.

We claim that any weak-$^*$ limit $\mu$ of the sequence $\mu_i$  on the one-point compactification of $\Ga\ba G$ is supported on $\Ga\ba G$. Without loss of generality, assume that $\mu_i\to\mu$ as $i\to\infty$.

Since $\mu_i$ is $U_i$-ergodic, the Birkhoff ergodic theorem yields a $\mu_i$-conull set $E_i\subset x_iF_i$ such that for all $y_i\in E_i$ and for all  $\varphi\in C_c(\Ga\ba G)$, 
$$
\lim_{T\to \infty} \frac{1}{T}\int_0^T\varphi (y_iu_{i,s})\,ds = \int_{\Ga\ba G}\varphi(x)\,d\mu_i(x).
$$

 Choose $\eta>0$ so that
$\eqref{eq.tt}$ holds.
By Lemma \ref{complete}, $x_i F_i. o$ intersects $\cM_{\ge  \eta}$ non-trivially for each $i\ge 1$.
Since $E_i$ is conull in $x_iF_i$ and $x_iF_i.o \cap \cal M_{\ge  \eta}\subset \core (\cM)\cap  \cal M_{\ge  \eta}$ is compact, we may,
after replacing $x_i$ if necessary, assume that $x_i\in E_i$ is contained in some fixed compact subset $\cal K\subset \Ga\ba G$.

Observe that  $\mu$ is the weak-$^*$ limit of $\mu_{T_i}:= \frac{1}{T}\int_0^T \delta_{x_iu_{i,s}} ds $ for some sequence $T_i\to \infty$.
Hence by Proposition \ref{ND}, we obtain that $$\mu(\RFM)=1.$$

Since $\mu$ is a probability measure on $\Ga \ba G$,
 we can  apply  Theorem \ref{prop.MS1} to the sequence $\mu_i\to \mu$. Thus $$\supp \mu= xL$$ for some $x\in \supp \mu$ and $L=\{g\in G: \mu g=\mu\}$.
 Let $g_i\in G$ be such that $x_i=\Ga g_i$, and let $p_0\in G$ be such that $x=\Ga p_0$.  Since $x\in \supp \mu$,  there exist $\ga_i\in \Ga$ and $u_i\in U_i$ such that 
 $$q_i:=\ga_i g_i u_i\to p_0\quad\text{as $i\to \infty$}.$$ Set $g_i':= p_0^{-1}q_i$; then $g_i' \to  e$ as $i\to \infty$. 
Since $x_iU_i= xg_i'U_i$ is uniformly distributed with respect to $\mu_i$,  Theorem \ref{prop.MS1}(2) implies that for all sufficiently large $i$,
 \be\label{LL} \supp \mu_i\subset x L g_i\subset \RFM.\ee 
In particular,
$$\Ga q_i F_i=\Ga g_i F_i \subset \Ga p_0 L g_i'= \Ga p_0 L p_0^{-1} q_i ,$$
and hence $$\Ga q_iF_i q_i^{-1}\subset \Ga p_0L p_0^{-1}.$$
Therefore for all large $i$, we have $$q_i F_iq_i^{-1}\subset p_0 L p_0^{-1}.$$

We claim that $L$ is reductive. Since $xL$ supports a finite $L$-invariant measure, $L$ is unimodular. Moreover, $L$ contains a conjugate of $F_i$ and hence cannot be a compact extension of a unipotent subgroup.
By Lemma \ref{pppp}, $L$ is reductive.
Set $$L_i :=q_i^{-1}p_0L^\circ p_0^{-1}q_i.$$  Since $L_i$ contains $F_i$,
Lemma \ref{lem.e1} implies that $L_i$ is $\Theta$-invariant.  Hence
$[q_i]L_i.o$ is a totally geodesic submanifold.
Since $[q_i]L_i.o$ has finite volume,  Lemma \ref{vol} gives
$$[q_i]L_i .o\subset \core (\cM ).$$

Since $x_i F_i.o= [q_i]F_i .o\subset [q_i] L_i .o$, by the maximality of $x_iF_i.o$ implies that
$$\text{either } L_i=G  \quad\text{or}\quad [q_i]F_i. o= [q_i]L_i.o.$$

We claim that $L_i=G$  for all sufficiently large $i$.
Suppose not. Then, for infinitely many $i$,
 $F_i\subset L_i\subset H_i$, and hence $$q_iH_i^{\nc} q_i^{-1}\subset p_0Lp_0^{-1}\subset q_iH_i q_i^{-1}.$$
Since $H_i= \op{N}(H_i^{\nc})^\circ$,
it follows that all subgroups $q_i H_i q_i^{-1}$s and hence all $\ga_i g_i H_i g_i^{-1}\ga_i^{-1} $ are equal to one another.

On the other hand, for all $i\ne j$, we have
$x_iH_i.o\ne x_j H_j .o$ 
and thus  $\ga g_i H_i. o\ne g_j H_j.o$ for all $\ga\in \Ga$.
By Lemma \ref{gy}, 
$$\ga g_i H_i g_i^{-1}\ga^{-1} \ne g_j H_j g_j^{-1} .$$
This contradiction shows that $L_i=G$ for all sufficiently large $i$. In particular, this implies that $L=G$. Hence $\Ga\ba G$ is a lattice
and $\mu$ is the $G$-invariant probability measure on $\Ga\ba G$.
This completes the proof.
\end{proof}

\section{Rigidity of totally geodesic submanifolds in the convex core}
In this section, we deduce the rigidity, properness, and finiteness results for totally geodesic submanifolds contained in the convex core of $\cM$ from the dynamical statements proved in Sections 5 and 6. 

\subsection*{Rigidity} We begin with the topological rigidity of totally geodesic submanifolds:
\begin{theorem}\label{Theoremmax0} Let $\cM$ be geometrically finite. If $\cN$ is a totally geodesic immersed submanifold of dimension at least two contained in  $\op{core}(\cal M)$, 
then the closure of $\cN$ is a totally geodesic, properly immersed submanifold of finite volume.\end{theorem}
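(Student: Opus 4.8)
The plan is to identify $\overline{\cN}$ with a homogeneous orbit by feeding the $H^{\nc}$-orbit attached to $\cN$ into the orbit-closure theorem of Section~5. Using Lemma~\ref{lem.PE0} I would first write $\cN=xH.o$ for some $x\in\Ga\ba G$ and $H\in\mathscr H$; since $\cN\subset\core(\cM)$, Lemma~\ref{vol} gives $xH^{\nc}\subset\RFM$. As $H^{\nc}$ is simple and non-compact it is generated by one-parameter unipotent subgroups, so Theorem~\ref{Theoremmax} yields a connected subgroup $L\supset H^{\nc}$ with $\overline{xH^{\nc}}=xL$, with $\op{Stab}_L(x)$ a lattice in $L$, and with $L$ either reductive or a compact extension of a connected unipotent subgroup. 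The second alternative is ruled out: if $L=QV$ with $V$ the unipotent radical and $Q$ compact, then $H^{\nc}\cap V$ is normal in $H^{\nc}$, hence trivial or all of $H^{\nc}$; the latter is impossible since $H^{\nc}$ is not unipotent, and the former would embed the non-compact group $H^{\nc}$ into the compact group $Q$. Hence $L$ is reductive, and by Lemma~\ref{lem.e1} it is $\Theta$-invariant; therefore $L^{\nc}$ is $\Theta$-invariant, and $L^{\nc}\ne\{e\}$ because $L$ is non-compact. Thus $L^{\nc}\in\mathscr H^*$, $H_1:=\op{N}_G(L^{\nc})^\circ\in\mathscr H$, and $H_1^{\nc}=L^{\nc}$ with $H_1.o=L^{\nc}.o=L.o$ by Lemma~\ref{lem.h1}.

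Next I would push this down to $\cM$. The basepoint map $\pi\colon\Ga\ba G\to\cM$, $z\mapsto z.o$, has compact fibers, hence is proper and closed; since $\cN=\pi(xH^{\nc})$, it follows that
$$\overline{\cN}=\overline{\pi(xH^{\nc})}=\pi\bigl(\overline{xH^{\nc}}\bigr)=\pi(xL)=xL.o=xH_1.o .$$
By Lemma~\ref{lem.PE0} this is a totally geodesic submanifold of $\cM$, of dimension at least two since it contains $\cN$; being a closure it is closed in $\cM$, so Lemma~\ref{OS}, applied to $xH_1.o$ with $H_1\in\mathscr H$, shows that $\overline{\cN}$ is properly immersed.

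For finiteness of volume I would argue that $\op{Stab}_{H_1}(x)$ is a lattice in $H_1$. Applying Ratner's Theorem~\ref{R} inside the finite-volume space $xL\cong\op{Stab}_L(x)\ba L$ to the subgroup $H^{\nc}$, and using $\overline{xH^{\nc}}=xL$ (which forces the output subgroup to be all of $L$), one obtains a one-parameter unipotent $U<H^{\nc}$ acting ergodically on $(xL,\mu_L)$, where $\mu_L$ is the finite $L$-invariant measure. Since $H^{\nc}\subset L^{\nc}$ (a simple non-compact subgroup of a reductive rank-one group lies in its semisimple part) and $L^{\nc}$ is normal in $L$ with $L/L^{\nc}$ compact, the $L$-equivariant projection $xL\to xL/L^{\nc}$ is $U$-invariant, hence $\mu_L$-a.e.\ constant by ergodicity; as $xL/L^{\nc}$ is a manifold this forces $xL=xL^{\nc}$. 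Then $\mu_L$ is a finite $L^{\nc}$-invariant measure on $xL^{\nc}$, so $\op{Stab}_{L^{\nc}}(x)=\op{Stab}_{H_1}(x)\cap H_1^{\nc}$ is a lattice in $H_1^{\nc}$; as $H_1/H_1^{\nc}$ is compact this makes $\op{Stab}_{H_1}(x)$ a lattice in $H_1$, whence $\op{vol}(\overline{\cN})=\op{vol}(xH_1.o)<\infty$.

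The genuine content of the argument is entirely concentrated in Theorem~\ref{Theoremmax} (which rests on the measure classification of Ratner, the avoidance theorem of Dani--Margulis, and the non-divergence estimate of Proposition~\ref{ND}); everything else is bookkeeping with the families $\mathscr H,\mathscr H^*$ and the passage from the reductive group $L$ to its semisimple part $L^{\nc}$ and its normalizer $H_1$. The step I expect to require the most care is the last paragraph, extracting finiteness of volume: one must use ergodicity of $L^{\nc}$ on $xL$ to collapse the compact quotient $xL/L^{\nc}$, and then be careful about which of $L$, $L^{\nc}$, $H_1$ is being used to define the volume of the totally geodesic submanifold and how the corresponding stabilizers are commensurable.
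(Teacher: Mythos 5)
Your first two paragraphs reproduce the paper's own argument: apply Theorem \ref{Theoremmax} to the unipotently generated orbit $xH^{\nc}\subset\RFM$ (Lemma \ref{vol}), rule out the compact-by-unipotent alternative, invoke Lemma \ref{lem.e1} to get $L$ reductive and $\Theta$-invariant, and use Lemma \ref{lem.h1} together with the properness of the basepoint map and Lemma \ref{OS} to identify $\overline{\cN}=xL.o=xH_1.o$ as a properly immersed totally geodesic submanifold. That part is correct and is essentially the paper's proof.

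The last paragraph, however, contains a genuine gap. Ergodicity of a one-parameter unipotent $U<H^{\nc}\subset L^{\nc}$ on $(xL,\mu_L)$ does not force $xL=xL^{\nc}$. Writing $xL\cong\Lambda\ba L$ with $\Lambda=\op{Stab}_L(x)$, the quotient ``$xL/L^{\nc}$'' is $\bar\Lambda\ba (L/L^{\nc})$, where $\bar\Lambda$ is the image of $\Lambda$ in the compact group $L/L^{\nc}$; this image is typically dense but not closed, so the quotient is not a manifold, and the a.e.-constancy argument only yields that $\bar\Lambda$ is dense, i.e.\ that $xL^{\nc}$ is dense in $xL$ --- which you already know from $\overline{xH^{\nc}}=xL$. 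Indeed both $xL=xL^{\nc}$ and the ensuing claim that $\op{Stab}_{L^{\nc}}(x)$ is a lattice in $L^{\nc}$ can fail: if $L=L^{\nc}\cdot T$ with $T$ a torus in the compact centralizer and $\Lambda$ is the graph of a character $\chi\colon\Delta\to T$ with dense image ($\Delta$ a lattice in $L^{\nc}$), then $\Lambda$ is a lattice in $L$ and $\overline{xL^{\nc}}=xL$, but $xL^{\nc}$ is not closed and $\op{Stab}_{L^{\nc}}(x)=\ker\chi$ has infinite covolume in $L^{\nc}$. This is precisely the phenomenon that the intermediate subgroup $F$ in Lemma \ref{lem.PE2} (and the groups $F_i$ in Theorem \ref{eq00}) is there to accommodate.

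The detour is also unnecessary: finiteness of volume follows directly from the conclusion of Theorem \ref{Theoremmax} that $\op{Stab}_L(x)$ is a lattice in $L$. Since the fibers of $L\to L.o$ are compact (conjugates of $L\cap K$), the space $\op{Stab}_L(x)\ba L.o$ has finite volume, and $\overline{\cN}=xL.o$ --- whose volume is by definition that of $\op{Stab}_\Gamma(gL.o)\ba gL.o$ --- is a further quotient of it, hence of finite volume; this is how the paper concludes. If you do want $\op{Stab}_{H_1}(x)$ to be a lattice in $H_1$, argue instead that $L$ is cocompact in $H_1=\op{N}_G(L^{\nc})^\circ$ and both are unimodular, so Weil's integration formula gives $\vol\bigl(\op{Stab}_L(x)\ba H_1\bigr)<\infty$ and a fortiori $\vol\bigl(\op{Stab}_{H_1}(x)\ba H_1\bigr)<\infty$; do not route the argument through a lattice in $L^{\nc}$, which need not exist.
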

\begin{proof} Let $\cN$ be a totally geodesic immersed submanifold of dimension at least two.
By Lemma \ref{lem.PE0}, $$\cN=xH.o\quad\text{ for some $H\in \mathscr H^*$ and $x\in \Ga\ba G$.}$$ Suppose that $\cal N\subset \core (\cM)$. By
Lemma \ref{vol}, $xH\subset \RFM.$
By the definition of $\mathscr H^*$, we have $H=H^{\op{nc}}$ and hence $H$ is generated by one-parameter unipotent subgroups. Therefore, by Theorem \ref{Theoremmax}, $\overline{xH}= xL$
for some connected closed subgroup $L$ containing $H$ such that $\op{Stab}_L(x)$ is a lattice in $L$.
By Lemma \ref{lem.e1}, $L$ is $\Theta$-invariant and hence $L^{\op{nc}}\in \mathscr{H}^*$. Consequently, $$xL.o=xL^{\op{nc}}.o$$ is a properly immersed totally geodesic submanifold. Since $\overline{xH}=xL$, we conclude that $\overline{\cN}= xL.o$, and  that $xL.o$ has finite volume. 
\end{proof}

We remark that this type of rigidity fails for a general totally geodesic submanifold  not contained in the core; see \cite{MMO1} for an example of quasifuchsian manifolds which contain geodesic planes whose closures are not submanifolds.

\subsection*{Properness} When the ambient manifold $\cM$ has infinite volume, Theorem \ref{Theoremmax0} immediately yields the following consequence for maximal totally geodesic submanifolds contained in the convex core.

\begin{cor} \label{F} Let $\cM$ be geometrically finite and $\Vol(\cM)=\infty$.
Every maximal totally geodesic  immersed submanifold $\cal N$ of dimension at least two contained in  $\op{core}(\cal M)$ is necessarily properly immersed, and has finite volume.
 \end{cor}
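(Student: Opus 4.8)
The plan is to derive Corollary~\ref{F} directly from Theorem~\ref{Theoremmax0} together with the maximality hypothesis. First I would take a maximal totally geodesic immersed submanifold $\cN$ of dimension at least two contained in $\core(\cM)$. By Theorem~\ref{Theoremmax0}, its closure $\overline{\cN}$ is again a totally geodesic, properly immersed submanifold of finite volume. In particular $\overline{\cN}\subset\core(\cM)$: indeed, $\core(\cM)=\Ga\ba\hull(\La)$ is a closed subset of $\cM$, so the closure of any subset of $\core(\cM)$ remains inside $\core(\cM)$.

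Next I would invoke maximality. We have a chain of totally geodesic submanifolds $\cN\subset\overline{\cN}\subset\core(\cM)$, where $\overline{\cN}$ is itself totally geodesic of dimension at least two (its dimension is at least $\dim\cN\ge 2$) and is contained in $\core(\cM)$. Since $\cN$ is maximal with respect to inclusion among totally geodesic submanifolds contained in $\core(\cM)$ that are not equal to $\cM$ --- and here $\overline{\cN}\ne\cM$ because $\overline{\cN}$ has finite volume while $\Vol(\cM)=\infty$ --- the maximality forces $\cN=\overline{\cN}$. Hence $\cN$ equals its own closure, so $\cN$ is closed in $\cM$, and by Lemma~\ref{OS} (applied with $\cN=xH.o$ written via some $H\in\mathscr H$, using that $\overline{\cN}=\cN$ implies the associated $H$-orbit is closed) the immersion $\iota$ is proper. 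Finiteness of $\vol(\cN)$ is then immediate from $\cN=\overline{\cN}$ and Theorem~\ref{Theoremmax0}, which asserts the closure has finite volume.

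The only mild subtlety --- and the one place I would be careful --- is the bookkeeping between the description $\cN=xH.o$ with $H\in\mathscr H^*$ (used in Theorem~\ref{Theoremmax0}) and the description needed for Lemma~\ref{OS}, which requires $H\in\mathscr H$. Concretely: once we know $\cN=\overline{\cN}$ has finite volume and is properly immersed, we may instead present it as $x'\,\op{N}_G(H)^\circ.o$ with $\op{N}_G(H)^\circ\in\mathscr H$ (since $H.o=\op{N}_G(H)^\circ.o$ by Lemma~\ref{lem.h1}), and then Lemma~\ref{OS} gives the equivalence of properness, closedness, and closedness of the orbit. Since $\overline{\cN}=\cN$ is closed in $\cM$ by construction, condition (2) of Lemma~\ref{OS} holds, hence so does (1): $\cN$ is properly immersed. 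I do not anticipate a genuine obstacle here; the statement is essentially a formal consequence of Theorem~\ref{Theoremmax0} plus the definition of maximality, with the infinite-volume hypothesis entering solely to guarantee $\overline{\cN}\ne\cM$.
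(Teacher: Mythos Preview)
Your proposal is correct and follows essentially the same route as the paper's proof: apply Theorem~\ref{Theoremmax0} to obtain that $\overline{\cN}$ is a properly immersed totally geodesic submanifold of finite volume contained in $\core(\cM)$, then use maximality to conclude $\cN=\overline{\cN}$. Your extra bookkeeping about $\mathscr H$ versus $\mathscr H^*$ and the appeal to Lemma~\ref{OS} is not needed, since Theorem~\ref{Theoremmax0} already asserts that $\overline{\cN}$ is properly immersed; once $\cN=\overline{\cN}$, properness and finite volume are inherited directly.
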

\begin{proof}
   By Theorem \ref{Theoremmax0}, $\overline{\cal N}$ is a totally geodesic properly immersed submanifold of finite volume. Since $\overline{\cal N}$ is contained in $\core (\cal M)$, the maximality of $\cal N$ implies $\cal N=\overline{\cal N}$, completing the proof.
\end{proof}

For geometrically finite real hyperbolic manifolds, any properly immersed totally geodesic submanifold of $\cM$
is itself geometrically finite \cite{OS1}. In particular, any properly immersed $\cN$ contained in $\core (\cM)$ must have finite volume.
A direct consequence of Theorem \ref{Theoremmax0} shows that this phenomenon extends to all geometrically finite rank-one manifolds:
\begin{cor}\label{gfvol}
    Let $\cM$ be geometrically finite. Then any properly immersed totally geodesic submanifold contained in $\core (\cM)$ has finite volume.
\end{cor}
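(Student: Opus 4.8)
The plan is to deduce this immediately from Theorem \ref{Theoremmax0}, using Lemma \ref{OS} to identify the closure of $\cN$ with $\cN$ itself. Let $\cN$ be a properly immersed totally geodesic submanifold of $\cM$ with $\cN \subset \core(\cM)$; by our standing convention $\cN$ has dimension at least two. By Lemma \ref{lem.PE0}, we may write $\cN = xH.o$ for some $x \in \Gamma\ba G$ and $H \in \mathscr H$. Since $\cN$ is properly immersed, the implication $(1)\Rightarrow(2)$ of Lemma \ref{OS} shows that $\cN$ is closed in $\cM$, so that $\overline{\cN} = \cN$.

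Now apply Theorem \ref{Theoremmax0} to $\cN$: as $\cN$ is a totally geodesic immersed submanifold of dimension at least two contained in $\core(\cM)$, its closure $\overline{\cN}$ is a totally geodesic properly immersed submanifold of finite volume. Combining this with $\overline{\cN} = \cN$, we conclude that $\cN$ itself has finite volume, which completes the proof.

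There is no real obstacle beyond what is already packaged into Theorem \ref{Theoremmax0}; the only point requiring care is to invoke the equivalence in Lemma \ref{OS} between properness of the immersion $\iota$ and closedness of $\cN$ in $\cM$ (equivalently, closedness of $xH$ in $\Gamma\ba G$), so that the finite-volume submanifold $\overline{\cN}$ produced by Theorem \ref{Theoremmax0} is literally $\cN$.
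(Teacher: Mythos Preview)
Your proof is correct and follows exactly the route the paper intends: the corollary is stated as ``a direct consequence of Theorem \ref{Theoremmax0}'', and your argument---use Lemma \ref{OS} to get $\overline{\cN}=\cN$ from proper immersion, then apply Theorem \ref{Theoremmax0}---is precisely that deduction.
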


\subsection*{Equidistribution} Building on the rigidity results above, we now examine whether infinitely many maximal totally geodesic submanifolds can exist inside the convex core. The following theorem shows that this 
phenomenon occurs only in the finite-volume case.

For a finite-volume geodesic submanifold $\cN=xH.o$ for $H\in \mathscr H$, the normalized volume measure $\mu_{\cal N}$ is the probability measure supported on $\cal N\subset \cM$, defined as the push-forward  $\pi_*(\mu_H)$  of the $H$-invariant probability measure $\mu_H$ under the projection $\pi:(\Ga\cap H)\ba H\to \cal M=\Ga \ba G.o$. Similarly, when $\cM$ has finite volume, we denote by $\mu_{\cal M}$
the probability measure on $\cM$ which is the push-forward of the $G$-invariant probability measure of $\Ga\ba G$.

\begin{thm}\label{eq}
    Let $\cM$ be a geometrically finite manifold. If there exist infinitely many maximal totally geodesic submanifolds $\cN_i$  contained in $\core (\cM)$ of dimension at least two,
    then $$\Vol (\cM)<\infty$$ and 
  the normalized volume measures $\mu_{\cal N_i}$ 
    become  equidistributed in $\cM$ as $i\to \infty$: for any $f\in C_c(\cM)$,
    $$\lim_{i\to \infty} \int_{\cN_i} f(x) d\mu_{\cN_i} (x) = \int_{\cM} f (x) d\mu_{\cM}(x) .$$
\end{thm}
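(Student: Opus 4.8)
The plan is to reduce Theorem~\ref{eq} to the dynamical equidistribution statement already packaged in Theorem~\ref{eq00}, by translating between totally geodesic submanifolds of $\cM$ and $H$-orbits inside $\RFM$.

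First I would set up the correspondence: each maximal totally geodesic submanifold $\cN_i\subset\core(\cM)$ of dimension at least two can, by Lemma~\ref{lem.PE0}, be written as $\cN_i=x_iH_i.o$ with $H_i\in\mathscr H^*$ and $x_i\in\Ga\ba G$. By Corollary~\ref{F}, since we are in the infinite-volume case $\cN_i$ is properly immersed and has finite volume; equivalently, by Lemma~\ref{OS} (applied to $\op{N}_G(H_i)^\circ\in\mathscr H$) and Lemma~\ref{vol}, the orbit $x_i\op{N}_G(H_i)^\circ$ is closed, lies in $\RFM$, and carries a finite invariant measure. The maximality of $\cN_i$ in $\core(\cM)$ translates, via the argument at the end of the proof of Theorem~\ref{eq00} (using Lemma~\ref{gy}, Lemma~\ref{lem.e1}, Lemma~\ref{vol}), into maximality of the orbit $x_i\op{N}_G(H_i)^\circ$ among orbits of subgroups in $\mathscr H$ contained in $\RFM$. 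Crucially, distinct $\cN_i$ give rise to distinct orbits $x_i\op{N}_G(H_i)^\circ.o$, hence distinct orbits in $\RFM$, so infinitely many $\cN_i$ produce infinitely many maximal $x_iH_i'\subset\RFM$ with $H_i'=\op{N}_G(H_i)^\circ\in\mathscr H$.

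Next I would invoke Theorem~\ref{eq00} directly: it yields that $\Ga$ is a lattice in $G$ — so $\Vol(\cM)<\infty$, proving the first assertion — and that $\overline{x_i(H_i')^{\nc}}=x_iF_i$ with $(H_i')^{\nc}<F_i<H_i'$, where the $F_i$-invariant probability measures $\mu_{F_i}$ converge weak-$^*$ to the $G$-invariant probability measure $\mu_{\Ga\ba G}$ on $\Ga\ba G$. Note $(H_i')^{\nc}=H_i$ since $H_i\in\mathscr H^*$. It remains to push this down to $\cM$. Let $\pi:\Ga\ba G\to\cM$ be the base-point projection. Since $F_i$ and $H_i$ have the same orbit in $X$ through $o$ (both equal $H_i.o=\cN_i$ up to the $\Ga$-action, by Lemma~\ref{lem.h1} applied to $F_i$, as $F_i^{\nc}=H_i$ forces $F_i.o=H_i.o$), the pushforward $\pi_*\mu_{F_i}$ is exactly the normalized volume measure $\mu_{\cN_i}$ on $\cN_i$. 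Likewise $\pi_*\mu_{\Ga\ba G}=\mu_{\cM}$. Continuity of pushforward under the proper map $\pi$ for weak-$^*$ convergence of probability measures then gives $\mu_{\cN_i}\to\mu_{\cM}$, i.e. for any $f\in C_c(\cM)$, $\int_{\cN_i}f\,d\mu_{\cN_i}=\int_{\Ga\ba G}(f\circ\pi)\,d\mu_{F_i}\to\int_{\Ga\ba G}(f\circ\pi)\,d\mu_{\Ga\ba G}=\int_{\cM}f\,d\mu_{\cM}$, which is the claimed equidistribution.

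**The main obstacle** I anticipate is bookkeeping around the passage from $\cN_i$ to the correct orbit in $\RFM$ and back: one must be careful that $H_i\in\mathscr H^*$ is not itself the group whose orbit is closed (since $xH_i^{\nc}$ need not be closed, as the excerpt warns after Lemma~\ref{OS}), so the right object is $\op{N}_G(H_i)^\circ\in\mathscr H$; and one must verify that the $F_i$ produced by Theorem~\ref{eq00} has the same $o$-orbit as $\cN_i$ so that $\pi_*\mu_{F_i}=\mu_{\cN_i}$ on the nose (with the correct normalization, which is why the volume normalization in Section~\ref{sec.gf} dividing by $\mu_{H\cap K}(H\cap K)$ matters). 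These are essentially the same compatibility checks carried out inside the proof of Theorem~\ref{eq00}; the remaining content of Theorem~\ref{eq} beyond that is the routine — but not entirely trivial — functoriality of weak-$^*$ convergence under the proper pushforward $\pi_*$.
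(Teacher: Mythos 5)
Your overall route is the same as the paper's: write $\cN_i=x_iH_i.o$, translate maximality of $\cN_i$ in $\core(\cM)$ into maximality of the corresponding $\mathscr H$-orbit inside $\RFM$ via Lemma \ref{vol} (and Lemma \ref{OS}), feed this into Theorem \ref{eq00} to conclude that $\Ga$ is a lattice and that $\mu_{F_i}$ converges to Haar measure on $\Ga\ba G$, and then push forward under the proper projection $\pi$, using $F_i^{\nc}=H_i$ and $F_i.o=H_i.o$ (Lemma \ref{lem.h1}) to identify $\pi_*\mu_{F_i}$ with $\mu_{\cN_i}$. This is exactly the paper's argument, and your pushforward step is at the same level of detail as the paper's one-line assertion $\pi_*(\mu_{F_i})=\pi_*(\mu_{H_i})$.

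The one genuine flaw is the clause ``by Corollary \ref{F}, since we are in the infinite-volume case'': Theorem \ref{eq} carries no infinite-volume hypothesis, and its equidistribution conclusion is non-vacuous precisely when $\Vol(\cM)<\infty$ (in the infinite-volume case the hypothesis turns out to be contradictory, which is how one recovers the finiteness theorem). Corollary \ref{F} is stated only for $\Vol(\cM)=\infty$, so as written your argument does not justify properness and finite volume of $\cN_i$ (equivalently, closedness of the orbit $x_i\op{N}_G(H_i)^\circ$) in the finite-volume case, which is the case the theorem is really about. The repair is immediate and is what the paper implicitly does at its ``by Lemma \ref{OS}, each $x_iH_i$ is closed'' step: closedness and finite volume follow from Theorem \ref{Theoremmax0} combined with maximality of $\cN_i$, with no assumption on $\Vol(\cM)$; alternatively one can bypass properness altogether, since the hypothesis of Theorem \ref{eq00} only requires maximal orbits contained in $\RFM$ (via Lemma \ref{vol}), and its conclusion $\overline{x_i(H_i')^{\nc}}=x_iF_i$ with $F_i<H_i'$ already yields that $\cN_i=x_iF_i.o$ has finite volume, so that $\mu_{\cN_i}$ is defined. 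With that substitution your proof coincides with the paper's.
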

\begin{proof}
By Lemma \ref{lem.PE0}, we have $\cal N_i= x_i H_i. o$ for some $H_i\in\mathscr{H}$ and $x_i\in \Ga \ba G$.
By Lemma \ref{OS}, each $x_iH_i$ is closed. Since $\cN_i$ is a maximal totally geodesic submanifold contained in $\core (\cM)$,
the orbit $x_iH_i$ is maximal among those contained in $\RFM$. By Theorem \ref{eq00},
 $\Ga$ is a lattice and $\mu_{F_i}$ converges to the $G$-invariant probability measure on $\Ga\ba G$, where $\mu_{F_i}$ is the $F_i$-invariant probability measure supported on 
 $\overline{x_iH_i^{\nc}}=x_iF_i$. 
Since  $\pi_*(\mu_i)= \pi_*(\mu_{H_i})$,
it follows that   $\mu_{\cal N_i}=\pi_*(\mu_i)$ converges to $ \mu_{\cal M}=\pi_*(\mu)$.
This completes the proof.
\end{proof}

\subsection*{Finiteness}  Combining Theorem \ref{Theoremmax0} and Theorem \ref{eq}, we  obtain the following finiteness result for infinite-volume manifolds.

\begin{cor}\label{Theoremfin3}
Let $\cal M$ be geometrically finite with $\op{Vol}(\cM)=\infty$.  Then there exist only finitely many maximal  totally geodesic submanifolds in $\cm$ of dimension at least two. \end{cor}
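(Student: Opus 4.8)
The plan is to deduce this immediately, by contradiction, from the equidistribution dichotomy of Theorem \ref{eq} (equivalently, from the lattice-forcing statement of Theorem \ref{eq00}). Suppose that $\core(\cM)$ contained infinitely many pairwise distinct maximal totally geodesic submanifolds $\cN_i$ of dimension at least two. By Lemma \ref{lem.PE0} we may write $\cN_i = x_iH_i.o$ with $H_i\in\mathscr H$ and $x_i\in\Ga\ba G$; by Lemma \ref{OS} each orbit $x_iH_i$ is closed in $\Ga\ba G$, and by the criterion of Lemma \ref{vol} (namely $\cN_i\subset\core(\cM)$ iff $x_iH_i\subset\RFM$) each $x_iH_i$ is a closed $\mathscr H$-orbit contained in $\RFM$, maximal among such orbits by the maximality of $\cN_i$. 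Then Theorem \ref{eq00} forces $\Ga$ to be a lattice, i.e.\ $\Vol(\cM)<\infty$, contradicting the standing hypothesis; hence only finitely many such submanifolds can exist. Alternatively one may invoke Theorem \ref{eq} verbatim, which already has exactly the hypothesis "infinitely many maximal totally geodesic submanifolds contained in $\core(\cM)$" and concludes $\Vol(\cM)<\infty$.

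I do not expect any genuine obstacle at this stage: all of the analytic difficulty — the non-divergence of the averaged measures inside $\RFM$ (Proposition \ref{ND}, resting on Benoist–Oh and Buenger–Zheng), the orbit-closure classification inside $\RFM$ (Theorem \ref{Theoremmax}), and the Mozes–Shah limiting argument — has been carried out in Sections 5–6 and packaged into Theorems \ref{eq00} and \ref{eq}. The only thing that needs checking here is the purely formal translation between the geometric object (a maximal totally geodesic submanifold of dimension $\ge 2$ contained in the convex core) and the dynamical object (a maximal closed $\mathscr H$-orbit contained in $\RFM$), which is precisely the content of Lemmas \ref{lem.PE0}, \ref{OS}, and \ref{vol}, together with the elementary observation that distinct submanifolds $\cN_i\ne\cN_j$ give distinct orbits $x_iH_i\ne x_jH_j$ (so "infinitely many distinct $\cN_i$" really does feed "infinitely many distinct maximal $x_iH_i\subset\RFM$" into Theorem \ref{eq00}). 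Consequently the argument is a one-line contradiction, and I would present it as such.
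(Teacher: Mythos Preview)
Your proposal is correct and matches the paper's approach: the corollary is stated immediately after Theorem \ref{eq} as a direct consequence (the paper writes ``Combining Theorem \ref{Theoremmax0} and Theorem \ref{eq}'' and gives no further argument), and your contrapositive reading of Theorem \ref{eq} is exactly the intended one-line proof. One small quibble: in your first route via Theorem \ref{eq00}, the appeal to Lemma \ref{OS} to conclude that $x_iH_i$ is closed is not self-contained---Lemma \ref{OS} is an equivalence, so you would first need to know that $\cN_i$ is closed, which in the infinite-volume setting comes from Corollary \ref{F} (i.e.\ Theorem \ref{Theoremmax0}); but closedness is in any case not a hypothesis of Theorem \ref{eq00}, so that step can simply be dropped.
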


Having established the finiteness of totally geodesic submanifolds in the convex core, we now prove Theorem \ref{SX}, which reformulates this statement on the level of their ideal boundaries.
\subsection*{Proof of Theorem \ref{SX}}
Let $S=\partial_\infty Y\in \mathcal S_X$ be a maximal element contained in $\La$. Since $Y=\hull (\partial_\infty Y)$ by Lemma \ref{yb},
the subspace $Y$ is a maximal totally geodesic subspace inside $\hull (\La)$. 
We claim that $\Ga S$ is closed in $\mathcal S_X$ with respect to the Chabauty-Hausdorff  topology. To see this, suppose that $\ga_i S=\ga_i \partial_\infty Y$ converges to $S'=\partial_\infty Y' \in \mathcal S_X$. 
 We claim that as $i\to \infty$,
 $$\ga_i Y=\hull (\ga_i S)\to Y'=\hull (S').$$ To verify this,
 let $x_i\in \ga_i Y$ converge to some $x \in X$, and let $\ell_i$ be a geodesic passing through $x_i$. Since $x_i\to x$ and hence all $\ell_i$ pass through a fixed compact subset of $X$, and hence, after passing to a subsequence, $\ell_i$ converges to some geodesic $\ell$ in $X$. Since $x\in \ell$ and the endpoints of $\ell$ lie in $S'$,  we have $x\in \hull (S')=Y'$. Therefore any Hausdorff limit of $\ga_i Y$ is contained in $Y'$. Conversely, let $x\in Y'$. Then $x$ lies on a geodesic $\ell$ with endpoints $\xi, \xi'$ in $S'$. Since $\ga_i S\to S'$, we can choose sequences $\xi_i\ne \xi'_i$ in $\ga_i S$
with $\xi_i\to \xi$ and $\xi'_i\to \xi'$. Let  $\ell_i$ denote the geodesic connecting $\xi_i$ and $\xi'_i$. Then $\ell_i$ converges to $\ell$, and
  we may choose a sequence $x_i\in \ell_i$ converging to $x$.
 Hence  $\ga_i Y\to Y'$ in the Chabauty-Hausdorff topology.

 Since $\Ga\ba \Ga Y$ is closed in $\cal M$, it follows that $\hull (S')\subset \Ga Y $, and hence $S'\in \Ga \partial_\infty Y$.
 This proves that $\Ga S$ is closed in $\mathcal S_X$.
 The finiteness then follows from Corollary \ref{Theoremfin3}. \qed

 \subsection*{Rigidity of bounded geodesic planes}
In the proof of Corollary \ref{Theoremfin3}, the
  geometric finiteness was used only in Proposition \ref{ND} to ensure that weak-$^*$ limits of the measures $\mu_i$ were probability measures in $\Ga\ba G$. When the corresponding totally geodesic submanifolds are contained in a fixed compact subset of $\Ga\ba G$, the same reasoning applies without assuming geometric finiteness:
\begin{theorem} Let $\cM=\Ga\ba X$ be a rank-one locally symmetric space which is non-compact. Then the closure of any bounded totally geodesic submanifold of dimension at least two is a totally geodesic submanifold. Moreover, any  compact subset of $\cM$ contains only finitely many maximal totally geodesic submanifolds of dimension at least two.
    \end{theorem}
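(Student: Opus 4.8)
The plan is to re-run, nearly verbatim, the proofs of Theorem~\ref{Theoremmax0} and of Theorem~\ref{eq00} (equivalently Corollary~\ref{Theoremfin3}), with the boundedness/compactness hypothesis taking over the role that geometric finiteness played there. As the preceding remark signals, geometric finiteness entered those proofs only through Proposition~\ref{ND}, whose conclusion --- that weak-$^*$ limits of the relevant unipotent orbit averages are probability measures --- is automatic as soon as the orbits involved are confined to a fixed compact subset of $\Ga\ba G$, which is exactly the situation here.

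For the first assertion, let $\cN$ be a bounded totally geodesic submanifold of dimension at least two. By Lemma~\ref{lem.PE0} I would write $\cN=xH.o$ with $H\in\mathscr H^*$ and $x\in\Ga\ba G$; boundedness of $\cN$ together with properness of $\pi\colon\Ga\ba G\to\cM$ forces $xH$ to be bounded. Since $H=H^{\nc}$ is generated by one-parameter unipotent subgroups, Theorem~\ref{Theoremmaxfin} --- valid for an arbitrary connected linear Lie group, with no geometric finiteness needed --- gives $\overline{xH}=xL$ with $L\supseteq H$ connected closed and $\Stab_L(x)$ a lattice in $L$. Then $L$ is unimodular and contains a simple non-compact subgroup, hence is reductive by Lemma~\ref{pppp} and $\Theta$-invariant by Lemma~\ref{lem.e1}, so $L^{\nc}\in\mathscr H^*$ and $xL.o=xL^{\nc}.o$ is a totally geodesic submanifold; since $\pi$ is proper, $\overline{\cN}=\pi(\overline{xH})=xL.o$ (and it is in fact properly immersed of finite volume). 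This is precisely the proof of Theorem~\ref{Theoremmax0}, reading ``bounded'' and Theorem~\ref{Theoremmaxfin} in place of ``contained in $\core(\cM)$ with $\cM$ geometrically finite'' and Theorem~\ref{Theoremmax}.

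For the second assertion, suppose for contradiction that a compact set $\cal K\subset\cM$ contains infinitely many pairwise distinct maximal totally geodesic submanifolds $\cN_i$ of dimension at least two. Each $\cN_i$ is bounded, so by the first part $\overline{\cN_i}$ is a totally geodesic submanifold contained in $\cal K\ne\cM$, and maximality forces $\cN_i=\overline{\cN_i}$; writing $\cN_i=x_iH_i.o$ with $H_i\in\mathscr H$, Lemma~\ref{OS} shows that $x_iH_i$ is closed and contained in the fixed compact set $\cal K':=\pi^{-1}(\cal K)$. Following the proof of Theorem~\ref{eq00}, I would take $\overline{x_iH_i^{\nc}}=x_iF_i$ (Theorem~\ref{Theoremmaxfin}) with $F_i$ reductive, $\Theta$-invariant, $F_i\subset H_i$, $F_i^{\nc}=H_i^{\nc}$ and $x_iF_i.o=\cN_i$; let $\mu_i=\mu_{F_i}$ be the associated $F_i$-invariant probability measures, all supported in $\cal K'$, with a one-parameter unipotent subgroup $U_i<H_i^{\nc}$ acting ergodically on $(x_iF_i,\mu_i)$ by the Mautner phenomenon. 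Along a subsequence $\mu_i\to\mu$, and since all $\mu_i$ are supported in the fixed compact set $\cal K'$, the limit $\mu$ is a probability measure --- the single place where Proposition~\ref{ND} was invoked before, now immediate. The remainder of the proof of Theorem~\ref{eq00} then carries over word for word: Mozes--Shah (Theorem~\ref{prop.MS1}) supplies $g_i'\to e$ and $\supp\mu=xL$ with $L$ reductive and $\Theta$-invariant up to conjugacy, together with $q_iF_iq_i^{-1}\subset p_0Lp_0^{-1}$ for all large $i$ (in the notation $q_i=\gamma_ig_iu_i$, $g_i'=p_0^{-1}q_i$ of that proof), so that $L_i:=q_i^{-1}p_0L^\circ p_0^{-1}q_i\supseteq F_i$ gives a totally geodesic submanifold $[q_i]L_i.o\supseteq\cN_i$ contained in $(\supp\mu)g_i'.o$, hence bounded. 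As $\cM$ is non-compact we must have $L\ne G$, so $[q_i]L_i.o\ne\cM$, and maximality of $\cN_i$ forces $[q_i]L_i.o=\cN_i$ for all large $i$; comparing dimensions this forces $H_i^{\nc}=F_i^{\nc}=L_i^{\nc}$, so $q_iH_i^{\nc}q_i^{-1}=p_0L^{\nc}p_0^{-1}$ is independent of $i$, hence so is $q_iH_iq_i^{-1}=\op{N}_G(q_iH_i^{\nc}q_i^{-1})^\circ$, and (since $u_i\in H_i$) so is $\gamma_ig_iH_ig_i^{-1}\gamma_i^{-1}$. By Lemma~\ref{gy} this yields $\cN_i=\cN_j$ for large $i,j$, contradicting distinctness.

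I do not expect a substantial obstacle: the one genuinely new input over the geometrically finite case is the automatic tightness of $\{\mu_i\}$ furnished by the compactness hypothesis, and everything else is bookkeeping. The only point needing a moment's care is verifying that the auxiliary totally geodesic submanifold $[q_i]L_i.o$ produced by the Mozes--Shah theorem cannot be all of $\cM$ --- which is where non-compactness of $\cM$ enters, replacing the ``finite volume $\Rightarrow$ contained in $\core(\cM)$'' step (Lemma~\ref{vol}) of the original argument.
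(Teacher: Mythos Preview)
Your proposal is correct and follows exactly the approach the paper indicates: rerun the proofs of Theorem~\ref{Theoremmax0} and Theorem~\ref{eq00} with Theorem~\ref{Theoremmaxfin} in place of Theorem~\ref{Theoremmax} and with the compactness hypothesis supplying the tightness of $\{\mu_i\}$ in lieu of Proposition~\ref{ND}. You have also correctly identified that non-compactness of $\cM$ is what rules out $L=G$ at the Mozes--Shah step (replacing the appeal to Lemma~\ref{vol} in the original argument), which is the only substantive modification needed.
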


 We record the following characterization of $\Ga$, up to finite index, in terms of totally geodesic subspaces contained in $\hull(\La)$.
\begin{theorem}\label{rig2}
    Let $\Ga<G$ be a geometrically finite Zariski dense non-lattice subgroup.
     Let $\mathsf T$ be any non-empty $\Ga$-invariant collection of
 maximal totally geodesic subspaces of dimension at least two contained in $\hull (\La)$.  Then
$$\Ga \text{ has finite index in the subgroup }\{g\in G: g ({\mathsf T} )= {\mathsf T}\}.$$  

\end{theorem}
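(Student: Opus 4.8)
Let $\Lambda':=\{g\in G: g(\mathsf T)=\mathsf T\}$. This is a subgroup of $G$ containing $\Gamma$, and the goal is to show $[\Lambda':\Gamma]<\infty$. The strategy is to pass from the collection $\mathsf T$ of totally geodesic subspaces in $X$ to a collection of closed orbits in $\Gamma\backslash G$, invoke the finiteness theorem (Corollary \ref{Theoremfin3}), and then a closedness/discreteness argument to show $\Lambda'$ is itself discrete with the same limit set, hence commensurable with $\Gamma$.

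First I would observe that since $\Gamma$ is not a lattice, $\mathcal M=\Gamma\backslash X$ has infinite volume, so Corollary \ref{Theoremfin3} applies: there are only finitely many maximal totally geodesic submanifolds of $\mathcal M$ contained in $\core(\mathcal M)$. A subspace $Y\in\mathsf T$ satisfies $\partial_\infty Y\subset\Lambda$ (as $Y\subset\hull(\Lambda)$, using Lemma \ref{yb} and that $\hull(\Lambda)$'s closure meets $\partial_\infty X$ exactly in $\Lambda$), so its image $\Gamma\backslash\Gamma Y$ is a totally geodesic submanifold contained in $\core(\mathcal M)$; by maximality of $Y$ inside $\hull(\Lambda)$ and Theorem \ref{Theoremmax0}, its image is in fact a maximal one (if it were contained in a strictly larger totally geodesic submanifold $\mathcal N'\subset\core(\mathcal M)$, lifting back would enlarge $Y$ within $\hull(\Lambda)$, contradicting maximality). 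Thus $\mathsf T$ partitions into finitely many $\Gamma$-orbits $\Gamma Y_1,\dots,\Gamma Y_r$; let $H_1,\dots,H_r\in\mathscr H$ with $G_{Y_j}^\circ=g_jH_jg_j^{-1}$ and set $\mathsf T_j=\Gamma Y_j$.

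Next, the key point: $\Lambda'$ normalizes $\Gamma$ up to finite index. Take $\lambda\in\Lambda'$. Then $\lambda$ permutes the finitely many $\Gamma$-orbits in $\mathsf T$; after passing to a finite-index subgroup $\Lambda''\le\Lambda'$ (still containing a finite-index subgroup of $\Gamma$ — one needs $\Gamma$ normal in $\Lambda'$, which follows since $\gamma\in\Gamma$ preserves each $\Gamma Y_j$, so $\Gamma\le\Lambda'$ acts trivially on the finite set of orbits, hence $\Gamma\lhd\ker(\Lambda'\to\mathrm{Sym}(r))=:\Lambda''$), each $\lambda\in\Lambda''$ preserves every $\Gamma Y_j$. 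Fixing $j$ and $Y=Y_j$: for $\lambda\in\Lambda''$ we have $\lambda Y=\gamma_\lambda Y$ for some $\gamma_\lambda\in\Gamma$, so $\gamma_\lambda^{-1}\lambda\in G_Y$. Hence $\Lambda''\subset\Gamma\cdot G_Y$ for each $j$, i.e. $\Lambda''\subset\bigcap_j\Gamma G_{Y_j}$. Now use Zariski density of $\Gamma$: I would argue that $\Lambda''$ normalizes $\Gamma$. Indeed for $\lambda\in\Lambda''$ and $\gamma\in\Gamma$, $\lambda\gamma\lambda^{-1}$ preserves $\mathsf T$ and is in the Zariski closure considerations... more directly: $\Lambda''$ preserves $\Lambda=\overline{\bigcup_{Y\in\mathsf T}\partial_\infty Y}$ (the closure of the union of all the ideal boundaries, which is $\Lambda$ itself since $\mathsf T\ne\emptyset$ forces — by minimality of $\Lambda$ — that this closed $\Gamma$-invariant set equals $\Lambda$). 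So $\Lambda''$ is a discrete-or-not subgroup preserving $\Lambda$; since $\Lambda$ is the limit set of the Zariski-dense $\Gamma$ and is not all of $\partial_\infty X$ (as $\Gamma$ is not a lattice), the stabilizer of $\Lambda$ in $G$ is discrete — here I would cite that a Zariski dense subgroup with a non-trivial proper limit set has discrete "commensurator-type" stabilizer, equivalently $\{g:g\Lambda=\Lambda\}$ is discrete because it acts properly discontinuously on $\partial_\infty X\setminus\Lambda$ which is a non-empty open set containing $\Gamma$-proper-discontinuity points. Hence $\Lambda''$ is discrete, contains $\Gamma$, and has the same limit set $\Lambda$; being geometrically finite (same convex core up to finite cover) with equal limit set and critical exponent, $[\Lambda'':\Gamma]<\infty$, and therefore $[\Lambda':\Gamma]<\infty$.

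\medskip

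\emph{The main obstacle.} The delicate step is the passage "$\Lambda''$ preserves $\Lambda$ and preserves no larger structure, hence is discrete." One must argue carefully that $\{g\in G:g\Lambda=\Lambda\}$ is discrete when $\Gamma$ is Zariski dense, geometrically finite, and not a lattice: the point is that $\partial_\infty X\setminus\Lambda$ is a non-empty $\Gamma$-invariant open set on which $\Gamma$ (and a fortiori $\{g:g\Lambda=\Lambda\}$, since it must also preserve the domain of discontinuity) acts — but one needs that the full stabilizer of $\Lambda$, not just $\Gamma$, acts properly discontinuously there. This can be deduced from the fact that an element $g$ with $g\Lambda=\Lambda$ normalizes the Patterson–Sullivan measure class on $\Lambda$ up to scalar, combined with a rigidity/ergodicity argument, or more robustly from the Zariski density of $\Gamma$: the Zariski closure of $\{g:g\Lambda=\Lambda\}$ is a proper algebraic subgroup (it cannot be all of $G$ since $\Lambda\ne\partial_\infty X$), while it contains the Zariski-dense $\Gamma$, forcing $\{g:g\Lambda=\Lambda\}$ to be contained in — in fact, the subtlety is ruling out an infinite discrete overgroup, which is precisely handled by finiteness of the covolume of the convex core via geometric finiteness. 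Once discreteness is in hand, the finite-index conclusion is a standard consequence of equality of limit sets and the Structure Theorem \ref{lem.f2} for geometrically finite groups (matching thick-parts of convex cores). I expect the write-up to require a lemma isolating "Zariski dense geometrically finite non-lattice $\Rightarrow$ $\mathrm{Stab}_G(\Lambda)$ discrete", which may already be available in the literature on geometrically finite groups and which I would cite rather than reprove.
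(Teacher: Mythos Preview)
Your overall strategy---reduce $\mathsf T$ to finitely many $\Gamma$-orbits via Corollary~\ref{Theoremfin3}, show the stabilizer $\Lambda'$ is discrete, then deduce finite index---is sound, and you correctly observe that $\Lambda'$ preserves $\Lambda$ by minimality. However, the discreteness step, which you yourself flag as the main obstacle, is not correctly argued. Your claim that ``the Zariski closure of $\{g:g\Lambda=\Lambda\}$ is a proper algebraic subgroup'' is simply false: any subgroup containing the Zariski-dense $\Gamma$ has Zariski closure equal to $G$. The Patterson--Sullivan and proper-discontinuity routes you sketch are also not completed. The correct argument is short and uses only Zariski density and simplicity: if $L$ denotes the identity component of the \emph{topological} closure of $\Lambda'$, then $L$ is normal in $\overline{\Lambda'}$, hence normalized by $\Gamma\subset\Lambda'$; since the normalizer of $L$ is Zariski closed and $\Gamma$ is Zariski dense, $L$ is normal in $G$, so $L\in\{e,G\}$ by simplicity. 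But $L=G$ would force $\Lambda'$ to be dense in $G$, contradicting that $\Lambda'$ preserves the proper closed subset $\hull(\Lambda)\subsetneq X$. Hence $L=\{e\}$ and $\Lambda'$ is discrete. This is exactly the paper's argument, and it requires no external lemma.

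For the finite-index step, your route (equal limit sets plus finite volume of the unit neighborhood of the convex core gives a finite-degree covering $\Gamma\backslash\hull(\Lambda)\to\Lambda'\backslash\hull(\Lambda)$) can be made to work and is a legitimate alternative. The paper proceeds differently and more internally: fixing $Y=H.o\in\mathsf T$ with $H\in\mathscr H$, Corollary~\ref{F} and Lemma~\ref{vol} give that $\Gamma\cap H$ is a lattice in $H$; since the discrete overgroup $\Delta^*:=\{g:g\Delta H=\Delta H\}\supset\Lambda'$ satisfies $\Delta^*\cap H\supset\Gamma\cap H$, one gets $[\Delta^*\cap H:\Gamma\cap H]<\infty$. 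Properness of $(\Gamma\cap H)\backslash H\to\Gamma\backslash G$ (Lemma~\ref{OS}) then forces $[\Delta^*:\Gamma]<\infty$: if $\delta_i'\in\Delta^*$ escapes mod $\Gamma$, write $\delta_i'H=\delta_iH$ with $\delta_i\in\Gamma$ and observe $\delta_i^{-1}\delta_i'\in H$ escapes mod $\Gamma\cap H$, contradicting the finite index just obtained. This avoids any appeal to general ``same limit set $\Rightarrow$ finite index'' lemmas and ties the proof directly to the properness results already established.
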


\begin{proof}
 We may assume without loss of generality that $Y= H .o\in \mathsf T$ for some $H\in \mathscr H$.
Let $\mathsf T'$  denote the subcollection of $\mathsf T$ consisting of all subspaces of the form $gH.o$ for $g\in G$.
If $g (\mathsf T)=\mathsf T$, then $g (\mathsf T')=\mathsf T'$.
Hence we may assume $\mathsf T=\mathsf T'$ without loss of generality.

 Let $$\Delta=\{g\in G: g ({\mathsf T} )= {\mathsf T}\}.$$    
    By hypothesis, $\Ga$ is Zariski dense and not a lattice, so
    $\La\ne \partial_\infty X$ and hence $\Delta\ne G$.
    Since $\Delta$ contains the Zariski dense subgroup $\Ga$, it follows that $\Delta$ is a discrete subgroup of $G$. Indeed,
     if $L$ denotes the identity component of the closure of $\Delta$, then $L$ is normalized by $\Ga$, and hence by $G$. Since $G$ is simple, this forces $L=\{e\}$, proving that $\Delta$ is discrete.

Set 
$$\Delta^*:= \{g\in G: g  \Delta H= \Delta H\}.$$
Clearly, $\Delta<\Delta^*$. Hence it suffices to show that \be\label{dd} [\Delta^*:\Gamma]<\infty. \ee 

By the same argument as above, $\Delta^*$ is discrete.
By Corollary \ref{F} and Lemma \ref{vol}, the orbit $\Ga\ba \Ga H$ is closed and $\Vol(\Ga\ba \Ga H )<\infty$. Hence
the intersection
$\Ga\cap H$ is a lattice in $H$.
Since $\Ga\cap H<\Delta^*\cap H$, it follows that 
$\Delta^*\cap H$ is also a lattice in $H$. Therefore
\be\label{ddd} [ \Delta^*\cap H: \Gamma\cap H ]<\infty.\ee

To prove \eqref{dd}, suppose on the contrary that  there exists
a sequence $\delta_i'\in \Delta^*$ tending to infinity mod $\Gamma$.
Since $\delta_i'\in \Delta^*$, by definition of $\Delta^*$, there 
exist $\delta_i\in \Ga$ such that
$\delta_i' H =\delta_i H$, and hence $\delta_i^{-1}\delta_i'\in H$. 
Since  $\Ga\ba \Ga H$ is closed, Lemma \ref{OS} implies that 
the projection map
$$(H\cap \Gamma)\ba H \to \Gamma\ba G$$
is proper. It follows that the sequence
$\delta_i^{-1}\delta_i'$ tends to infinity
modulo $H\cap \Gamma$, contradicting
 \eqref{ddd}. This proves \eqref{dd}.
\end{proof}

\end{document}